\pgfplotsset{compat=1.15}
\title[Uniqueness for the Half-wave equation]{On uniqueness for half-wave maps in dimension \texorpdfstring{$\lowercase{d} \geq 3$}{d >= 3}}
\renewcommand{\S}{{\mathbb S}}
\newtheorem{theorem}{Theorem}
\newtheorem{lemma}[theorem]{Lemma}
\newtheorem{corollary}[theorem]{Corollary}
\theoremstyle{definition}
\theoremstyle{remark}
\newcommand{\R}{\mathbb{R}}
\newcommand{\Z}{\mathbb{Z}}
\newcommand{\brac}[1]{\left (#1 \right )}
\newcommand{\norm}[1]{\left \|#1 \right \|}
\newcommand{\scpr}[2]{\left \langle #1 , #2\right \rangle}
\newcommand{\abs}[1]{\left\lvert #1 \right \rvert}
\renewcommand{\vec}[1]{{\bf #1}}
\newcommand{\barint}{
\rule[.036in]{.12in}{.009in}\kern-.16in \displaystyle\int }
\newcommand{\barcal}{\text{$ \rule[.036in]{.11in}{.007in}\kern-.128in\int $}}
\def\mvint_#1{\mathchoice
          {\mathop{\vrule width 6pt height 3 pt depth -2.5pt
                  \kern -8pt \intop}\nolimits_{\kern -3pt #1}}%
          {\mathop{\vrule width 5pt height 3 pt depth -2.6pt
                  \kern -6pt \intop}\nolimits_{#1}}%
          {\mathop{\vrule width 5pt height 3 pt depth -2.6pt
                  \kern -6pt \intop}\nolimits_{#1}}%
          {\mathop{\vrule width 5pt height 3 pt depth -2.6pt
                  \kern -6pt \intop}\nolimits_{#1}}}
\numberwithin{theorem}{section} \numberwithin{equation}{section}
\newcommand{\lap}{\Delta }
\newcommand{\aleq}{\lesssim}
\newcommand{\ageq}{\succsim}
\newcommand{\aeq}{\approx}
\newcommand{\Rz}{\mathcal{R}}
\newcommand{\laps}[1]{(-\Delta)^{\frac{#1}{2}}}
\newcommand{\Ds}[1]{|\nabla|^{#1}}
\newcommand{\Dso}{|\nabla|}
\newcommand{\lapms}[1]{\mathscr{I}_{#1}}
\def\avint{\,\ThisStyle{\ensurestackMath{%
			\stackinset{c}{.2\LMpt}{c}{.5\LMpt}{\SavedStyle-}{\SavedStyle\phantom{\int}}}%
		\setbox0=\hbox{$\SavedStyle\int\,$}\kern-\wd0}\int}
\renewcommand{\div}{\operatorname{div}}
\let\latexchi\chi
\renewcommand\chi{\@ifnextchar_\sub@chi\latexchi}
\newcommand{\sub@chi}[2]{
  \@ifnextchar^{\subsup@chi{#2}}{\latexchi^{}_{#2}}%
}
\newcommand{\subsup@chi}[3]{
  \latexchi_{#1}^{#3}%
}
\author{Eugene Eyeson}
\email[Eugene Eyeson]{eue3@pitt.edu}
\author{Silvino Reyes Farina}
\email[Silvino Reyes Farina]{sir25@pitt.edu}
\author{Armin Schikorra}
\email[Armin Schikorra]{armin@pitt.edu}
\address{Department of Mathematics,
University of Pittsburgh,
301 Thackeray Hall,
Pittsburgh, PA 15260, USA}
\begin{document}
\begin{abstract}
Extending an argument by Shatah and Struwe we obtain uniqueness for solutions of the half-wave map equation in dimension $d \geq 3$ in the natural energy class.
\end{abstract}

\subjclass{35L05, 35B40}.
\keywords{wave equation, fractional wave maps, halfwave maps, uniqueness}
\maketitle 
\tableofcontents

\section{Introduction and main result}
Half-wave maps appear in the physics literature as the continuum limit of Calagero-Moser spin systems, see \cite{LenzmannSok20} and references within. 
They are solutions $\vec{u}: \R^d \times [0,T] \to \S^2 \subset \R^3$ to the half-wave maps equation which is given by
\begin{equation}\label{eq:half-wavemapseq}
 \partial_t \vec{u}  = \vec{u} \wedge \laps{1} \vec{u} \quad \text{in $\R^d \times (0,T)$}.
\end{equation}
Here and henceforth $\wedge$ denotes the cross product in $\R^3$ and $\laps{1} = \Dso{}$ is the half-Laplacian.
Recently, several authors, e.g. \cite{KS18,LS2018,GL18,Berntson_2020,KK21,SWZ21,Liu2021}, began to study mathematical properties of \eqref{eq:half-wavemapseq}. 

In \cite{KS18} the structural relation of the half-wave map equation to the wave map equation
\[
 \partial_{tt} \vec{u} - \lap \vec{u} = \brac{- \partial_{t}\vec{u} \cdot \partial_{t} \vec{u}+ \nabla \vec{u} \cdot \nabla \vec{u}} \vec{u}
\]
was discovered and exploited. Namely, by a direct computation, see \cite[p.663]{KS18}, a solution of \eqref{eq:half-wavemapseq} solves 
\begin{equation}\label{eq:solks}
\begin{split}
 \partial_{tt} \vec{u} - \lap \vec{u} =& \vec{u} |\nabla \vec{u}|^2  - \vec{u} |\Dso{}\vec{u}|^2\\
 &+ \Dso{} \vec{u}\, \brac{\scpr{\vec{u}}{\Dso{} \vec{u}}}\\
 &+ \vec{u} \wedge \Dso{} \brac{ \vec{u} \wedge\Dso{} \vec{u}}- \vec{u} \wedge \brac{\vec{u} \wedge (-\lap) \vec{u}}.
 \end{split}
\end{equation}
%
The authors of \cite{KS18} then raised the question if one can use this route to extend methods developed for wave maps, e.g. those in the celebrated articles \cite{T98,Tao01I,Tao01II}, to half-wave maps. Following this principle, in \cite{KS18,KK21,Liu2021} different well-posedness results for large dimensions were discovered. Observe that the energy-critical dimension for the halfwave map equation is $d=1$, as opposed to the energy-critical dimension of the wave map equation, which is $d=2$.

In this work we also follow this spirit of treating solutions to the halfwave map equation as solutions to a wave-map-type equation, but we focus on techniques developed for wave maps by Shatah and Struwe \cite{SS02}. Our main result is the following uniqueness property of half-wave maps.

\begin{theorem}[Uniqueness]\label{th:main}
Let $d \geq 3$ and $\alpha \in (1,d+\frac{1}{2})$. If $\vec{u},\vec{v}: \R^d \times [0,T]  \to \S^2$ are smooth solutions to the half-wave map equation with the same initial data $\vec{u}(\cdot,0) = \vec{v}(\cdot,0) \in Q+ C_c^\infty(\R^d,\R^3)$ for some $Q \in \S^2$, and if
\begin{equation}\label{eq:energyassumption}
\|\Ds{\alpha}\vec{u}\|_{L^2_t L^{(\frac{2d}{2\alpha-1},2)}_x(\R^d\times (0,T))} +  \|\Ds{\alpha}\vec{v}\|_{L^2_t L^{(\frac{2d}{2\alpha-1},2)}_x(\R^d\times (0,T))} < \infty
\end{equation}
then $\vec{u} \equiv \vec{v}$.
\end{theorem}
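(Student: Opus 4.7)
The plan is to reduce the uniqueness question to a linear wave equation estimate for the difference $\vec{w} := \vec{u}-\vec{v}$. Since $\vec u$ and $\vec v$ both satisfy the second-order wave-map-type equation \eqref{eq:solks}, subtracting the two equations produces
\[
 \partial_{tt}\vec{w}-\lap \vec{w} = F(\vec{u},\vec{v}),
\]
with vanishing initial data (both the value and time-derivative at $t=0$ coincide, since the initial data agree and $\partial_t \vec u(\cdot,0) = \vec u(\cdot,0)\wedge \Ds{1}\vec u(\cdot,0) = \partial_t \vec v(\cdot,0)$). Here $F$ is a sum of expressions that are \emph{trilinear} in $\vec u$, $\vec v$ and their first/half-order derivatives, each of which can be rewritten so that at least one factor contains $\vec w$ or one of its (possibly fractional) derivatives. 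For the nonlocal pieces $\Dso{}(\vec u\wedge \Dso{} \vec u)$ and $\vec u\wedge(\vec u\wedge \lap\vec u)$, I would use the pointwise cross-product structure together with fractional Leibniz/commutator identities to rewrite them as a genuinely trilinear form in the high/low frequencies of $\vec u,\vec v,\vec w$.

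Next I would apply Strichartz/Duhamel to the linear wave equation satisfied by $\vec{w}$. With zero Cauchy data we have
\[
\vec{w}(t)=\int_0^t \frac{\sin((t-s)\Dso{})}{\Dso{}}F(s)\,ds,
\]
and a sharp Strichartz estimate in Lorentz norms yields, for the wave-admissible pair $(2,\tfrac{2d}{2\alpha-1})$ at regularity $\alpha$,
\[
 \|\Ds{\alpha}\vec{w}\|_{L^2_t L^{(\frac{2d}{2\alpha-1},2)}_x} \lesssim \|\Ds{\alpha-1} F\|_{L^{2}_t L^{(\frac{2d}{2d-2\alpha+1},2)}_x(\Rset^d\times(0,\tau))},
\]
valid on any subinterval $(0,\tau)\subset(0,T)$. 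The use of the Lorentz refinement $L^{(\cdot,2)}$ is crucial, since the $\alpha$-Strichartz norm on the left is the \emph{scale-invariant} one, matching the energy assumption \eqref{eq:energyassumption} and enabling the endpoint bilinear Hölder estimates that would fail in pure $L^p$.

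The main work is then the bound on $F$ in the dual norm. Writing each trilinear term schematically as $A \cdot B \cdot C$ with one factor carrying $\vec w$, fractional Leibniz rules in Lorentz spaces (O'Neil) and Sobolev embedding $\dot H^{\alpha,(p,2)}\hookrightarrow L^{(\frac{dp}{d-\alpha p},2)}$ should give
\[
 \|\Ds{\alpha-1}(A\cdot B\cdot C)\|_{L^{2}_t L^{(\frac{2d}{2d-2\alpha+1},2)}_x} \lesssim \Big(\|\Ds{\alpha}\vec{u}\|_{L^2_t L^{(\ast,2)}_x}+\|\Ds{\alpha}\vec{v}\|_{L^2_t L^{(\ast,2)}_x}\Big)^{2}\,\|\Ds{\alpha}\vec{w}\|_{L^2_t L^{(\ast,2)}_x(\Rset^d\times(0,\tau))},
\]
where $\ast = \frac{2d}{2\alpha-1}$. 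Here the restriction $\alpha<d+\tfrac12$ ensures positivity of the Sobolev exponents, while $\alpha>1$ controls the order of $\Dso{}$-derivatives appearing in the nonlinearity.

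Finally, by absolute continuity of the norm in \eqref{eq:energyassumption}, for any $\eps>0$ we can choose $\tau>0$ so small that
\[
\|\Ds{\alpha}\vec{u}\|_{L^2_t L^{(\ast,2)}_x(\Rset^d\times(0,\tau))}+\|\Ds{\alpha}\vec{v}\|_{L^2_t L^{(\ast,2)}_x(\Rset^d\times(0,\tau))}<\eps,
\]
so that combining the two inequalities above absorbs the $\vec w$-term on the right and forces $\vec w\equiv 0$ on $(0,\tau)$. A standard continuation argument then propagates uniqueness up to $T$. The delicate step I expect to be hardest is the trilinear estimate for the nonlocal terms in $F$: one must carefully decompose $\Dso{}(\vec u\wedge \Dso{}\vec u)-\Dso{}(\vec v\wedge\Dso{}\vec v)$ via commutators so that $\vec w$ (rather than its full half-derivative) carries the weak factor, and verify that all resulting products obey the O'Neil-Hölder relation in Lorentz indices.
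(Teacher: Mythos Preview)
Your approach is genuinely different from the paper's: the paper never uses Strichartz/Duhamel for the difference, but instead runs an \emph{energy method}. One defines $\mathcal{E}(t)=\tfrac12(\|\nabla\vec w\|_{L^2}^2+\|\partial_t\vec w\|_{L^2}^2)$, multiplies the wave-map-type equation for $\vec w$ by $\partial_t\vec w$, and proves the differential inequality $\dot{\mathcal E}(t)\le\Sigma(t)\mathcal E(t)$ with $\Sigma\in L^1(0,T)$ by \eqref{eq:energyassumption}; Gr\"onwall then gives $\mathcal E\equiv0$. The assumption \eqref{eq:energyassumption} enters only to make $\int_0^T\Sigma<\infty$, not as a bootstrap norm.

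Your Strichartz route has a real gap at the trilinear step. For the terms coming from $\vec v\wedge[\Dso,\vec v\wedge](\Dso\vec w)$ and $\vec v\wedge(\Dso\vec w\wedge\Dso\vec u)$, a direct $L^p$ bound on $\Ds{\alpha-1}F$ of the form you claim does not close: the commutator $H_{\Dso}(v^j,\Dso w^i)$ places strictly more than one derivative on $\vec w$ (Leibniz requires $\sigma\in(0,1)$, so you end up needing $\Ds{\alpha+\varepsilon}\vec w$), and there is no second factor of $\vec u,\vec v$ to absorb the excess. The paper overcomes exactly this obstruction, but only \emph{after} pairing against $\partial_t\vec w$: it rewrites $\partial_t\vec w=(\vec u-\vec v)\wedge\Dso\vec u+\vec v\wedge\Dso\vec w$ via the equation, and then exploits sphere-specific algebraic cancellations unavailable in a bare $L^p$ estimate---the ``Struwe trick'' $(\vec u+\vec v)\cdot(\vec u-\vec v)=0$ to trade $\vec v\cdot\Dso\vec w$ for lower-order terms, and determinant identities $\det(\vec a|\vec a|\cdot)=0$ that kill the worst piece of $\sum_j v^jH_{\Dso}(v^j,\Dso w^i)$ when tested against $\vec v\wedge\Dso\vec w$. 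These cancellations are the heart of the argument and cannot be replaced by generic fractional Leibniz rules in Lorentz spaces; without them your claimed bound on $\|\Ds{\alpha-1}F\|$ fails for the last line of the nonlinearity.
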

Here $L^{(p,q)}$ denotes the Lorentz space. The a priori assumptions \eqref{eq:energyassumption} are the natural energy assumptions for initial data $\vec{u_0},\vec{v_0} \in \dot{H}^{\frac{d}{2}}(\R^d)$, which was one of the crucial observations in \cite{SS02} where Shatah and Struwe observed this for $\alpha = 1$. A careful inspection of their argument actually gives the assumption \eqref{eq:energyassumption} for small $\alpha > 1$, see \Cref{s:strichartz}. 

As in the case of Shatah-Struwe, our arguments rely mostly on geometric properties combined with fractional Leibniz rules and related commutator estimates. However, while for the wave map equation the proof of uniqueness fits on one page, our argument does not -- since it relies on several further structural observations of the ``tangential part'' of the right-hand side of \eqref{eq:solks}, which we hope are of independent interest.

\subsection*{Outline}
In \Cref{s:prelim} we introduce operators and estimates needed in the proof of \Cref{th:main}. We believe that most, if not all, of these estimates are known at least to some experts -- and they can be proven by standard techniques. In \Cref{s:decay} we discuss the main part of the proof, the decay estimates in time, \Cref{th:detest}. While we are substantially inspired by the argument by Shatah-Struwe, our estimates are more elaborate, even though they mostly rely on the fractional Leibniz rule. The decay estimates of \Cref{th:detest} combined the standard Gr\"onwall type inequality imply \Cref{th:main}, see \Cref{s:proofmain}. In \Cref{s:strichartz} we discuss the suitability of the assumptions \eqref{eq:energyassumption} for $\alpha > 1$, $\alpha \aeq 1$.

We believe that our arguments can also be used to discuss existence for small data in the above energy class as in Shatah-Struwe, which will be the subject of a future investigations.

\subsection*{Acknowledgements}
Funding by NSF Career DMS-2044898 and Simons foundation grant no 579261 is gratefully acknowledged.

\section{Preliminaries: Leibniz Rule, Sobolev embedding and Gagliardo-Nirenberg}\label{s:prelim}
Throughout the paper we use the standard $\aleq$, $\ageq$, $\aeq$ notation: we write $A \aleq B$ if there is a multiplicative constant $C > 0$,  which may change from line to line, such that $A \leq C B$. We write $A \aeq B$ if $A \aleq B$ and $B \aleq A$.

We denote vectors in bold-face, such as $\vec{v} \in \R^3$.

The fractional Laplacian is as a multiplier operator via the Fourier transform $\mathcal{F}$ for a constant $c > 0$,
\[
 \Ds{s} f(x) \equiv \laps{s} f(x) := \mathcal{F}^{-1} (c|\cdot|^s \mathcal{F} f(\cdot)) (x).
\]
We also remark the useful potential representation for some (different) constant $c \in \R$ and $s \in (0,1)$
\[
 \Ds{s} f(x) \equiv \laps{s} f(x) = c\int_{\R^d} \frac{f(x)-f(y)}{|x-y|^{n+s}}\, dy,
\]
and, for $s \in (0,2)$,
\[
 \Ds{s} f(x) \equiv \laps{s} f(x) = -\frac{c}{2}\int_{\R^d} \frac{f(x+h)+f(x-h)-2f(x)}{|h|^{d+s}}\, dh.
\]
As for negative powers, $\lapms{s} \equiv (-\lap)^{-\frac{s}{2}}$ denotes the Riesz potential,
\[
\lapms{s} f(x) \equiv (-\lap)^{-\frac{s}{2}} f(x) := \mathcal{F}^{-1} (c|\cdot|^{-s} \mathcal{F} f(\cdot)) (x).
\]
It has the potential representation for $s \in (0,d)$,
\[
 \lapms{s} f(x) \equiv (-\lap)^{-\frac{s}{2}} f(x) = c \int_{\R^d} |x-z|^{s-n} f(z)\, dz.
\]

Some of our arguments will depend on Lorentz space estimate, $L^{p,q}(\R^d)$. We only recall the main properties and refer the reader to \cite[Section 1.4]{GrafakosClassical}: For $p \in (1,\infty)$ we have $L^{p,p}(\R^d) = L^p(\R^d)$, $L^{p,q_1}(\R^d) \subset L^{p,q_2}(\R^d)$ whenever $q_1 \leq q_2$, $q_1,q_2 \in [1,\infty]$. $L^{p,\infty}(\R^d)$ is often referred to as the weak $L^p$-space.

\subsection{Embedding theorems}
A casual observation we will use throughout this paper is the following comparability
\begin{lemma}\label{la:nablalaps}
For any $p \in (1,\infty)$,
\[
 \|\nabla f\|_{L^p(\R^n)} \aeq \|\Ds{1} f\|_{L^{p}(\R^n)}.
\]
More generally in the realm of Lorentz spaces, for any $q \in [1,\infty]$
\[
 \|\nabla f\|_{L^{p,q}(\R^n)} \aeq \|\Ds{1} f\|_{L^{p,q}(\R^n)}.
\]
\end{lemma}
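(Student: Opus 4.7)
The plan is to reduce the equivalence to the $L^p$-boundedness of the Riesz transforms. Denote by $R_j$ the Riesz transform, i.e.\ the Fourier multiplier with symbol $-i\xi_j/|\xi|$. On the Fourier side,
\[
 \widehat{\partial_j f}(\xi) = i\xi_j \hat f(\xi) = -\Bigl(\tfrac{-i\xi_j}{|\xi|}\Bigr)\, |\xi|\,\hat f(\xi),
\]
so (up to a sign and the constant $c$ in the definition of $\Ds{1}$) we have the pointwise identity $\partial_j f = -c\, R_j \Ds{1} f$. Conversely, writing $|\xi| = \sum_j \tfrac{\xi_j}{|\xi|}\xi_j$, we obtain $\Ds{1} f = c' \sum_j R_j \partial_j f$. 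Because each $R_j$ is a classical Calder\'on--Zygmund operator, it is bounded on $L^p(\R^n)$ for every $p \in (1,\infty)$; applying this to the two identities above yields the first equivalence.

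For the Lorentz-space statement, the same two identities are used; one only has to upgrade the mapping properties of $R_j$ from $L^p$ to $L^{p,q}$. Since $R_j: L^p \to L^p$ is bounded for the full range $p \in (1,\infty)$, real interpolation (Marcinkiewicz) gives the boundedness $R_j: L^{p,q}(\R^n) \to L^{p,q}(\R^n)$ for every $p \in (1,\infty)$ and every $q \in [1,\infty]$; see e.g.\ \cite[Section 1.4]{GrafakosClassical}. Applying these bounds to $\partial_j f = -c\, R_j \Ds{1} f$ and $\Ds{1} f = c'\sum_j R_j \partial_j f$ yields the Lorentz version.

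A small point worth mentioning is that, a priori, $f$ is only defined up to an additive constant (or polynomial) by the norms on either side; one works in the homogeneous Sobolev space $\dot W^{1,p}$ (respectively its Lorentz analogue), where the identities above make sense as equalities of tempered distributions modulo constants, or equivalently as genuine functions once a normalization is fixed. This is standard and does not affect the norm comparison.

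I do not expect a serious obstacle here: the result is really just the $L^p$- and $L^{p,q}$-boundedness of Riesz transforms, repackaged. The only step that is not entirely mechanical is the Lorentz boundedness of $R_j$, but this follows directly from real interpolation between any two $L^{p_0}$ and $L^{p_1}$ bounds with $1 < p_0 < p < p_1 < \infty$.
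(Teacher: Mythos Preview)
Your proof is correct and follows essentially the same route as the paper: both arguments reduce the equivalence to the identities $\partial_j = c\, R_j \Ds{1}$ and $\Ds{1} = c' \sum_j R_j \partial_j$ together with the boundedness of the Riesz transforms on $L^p$ and $L^{p,q}$. The paper simply asserts the Lorentz boundedness of $R_j$, whereas you explicitly justify it via real interpolation, and your remark on working modulo constants is a helpful addition but not present in the original.
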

\begin{proof}
This follows since the Riesz transforms $\Rz_i := \partial_i \lapms{1}$ are bounded operators on $L^p(\R^n) \to L^p(\R^n)$ and $L^{p,q}(\R^n) \to L^{p,q}(\R^n)$ for any $p \in (1,\infty)$ and $q \in [1,\infty]$, combined with the following facts that can easily be checked using the Fourier transform,
\[
 \partial_i = c_1 \Rz_i \Ds{1}, \quad \text{and} \quad \Ds{1} = c_2 \sum_{i=1}^n \Rz^i \partial_i.
\]
\end{proof}

\begin{lemma}[Sobolev inequality]\label{la:sobolev}
Let $\alpha \in (0,d)$ and $p \in (1,\frac{d}{\alpha})$ then for any $f \in C_c^\infty(\R^d)$,
\[
 \|f\|_{L^{\frac{dp}{d-\alpha p}}(\R^d)} \aleq \|\Ds{\alpha} f\|_{L^p(\R^d)}.
\]
Equivalently, in terms of the Riesz potential $\lapms{\alpha} \equiv \Ds{-\alpha}$ we have 
\[
 \|\lapms{\alpha} f\|_{L^{\frac{dp}{d-\alpha p}}(\R^d)} \aleq \|f\|_{L^p(\R^d)}.
\]
In terms of Lorentz spaces we have for any $q \in [1,\infty]$,
\[
 \|f\|_{L^{\frac{dp}{d-\alpha p},q}(\R^d)} \aleq \|\Ds{\alpha} f\|_{L^{p,q}(\R^d)}.
\]
and
\[
 \|\lapms{\alpha} f\|_{L^{\frac{dp}{d-\alpha p,q}}(\R^d)} \aleq \|f\|_{L^{p,q}(\R^d)}.
\]
An important limit version is 
\begin{equation}\label{eq:limitsobLinfty}
 \|\lapms{\alpha} f\|_{L^{\infty}(\R^d)} \aleq \|f\|_{L^{\frac{d}{\alpha},1}(\R^d)}.
\end{equation}
\end{lemma}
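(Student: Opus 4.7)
The proof is a compilation of fairly standard facts, and the plan is to reduce everything to the Hardy--Littlewood--Sobolev (HLS) inequality for the Riesz potential $\lapms{\alpha}$. First, I would observe the equivalence of the two displayed $L^p$-inequalities: writing $g = \Ds{\alpha} f$ (so $f = \lapms{\alpha} g$, for Schwartz $f$), the Sobolev inequality $\|f\|_{L^{dp/(d-\alpha p)}} \aleq \|\Ds{\alpha}f\|_{L^p}$ is literally the HLS bound $\|\lapms{\alpha} g\|_{L^{dp/(d-\alpha p)}} \aleq \|g\|_{L^p}$. So the whole $L^p$-statement reduces to HLS, which I would invoke as a known theorem (cf.\ Grafakos).

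Next, for the Lorentz space version I would use that $\lapms{\alpha}$ is convolution with the kernel $|\cdot|^{\alpha - d}$, which lies in the weak Lorentz space $L^{d/(d-\alpha),\infty}(\R^d)$. By the Lorentz--Young convolution inequality (or equivalently by real interpolation of the strong-type $L^p \to L^{dp/(d-\alpha p)}$ bounds of HLS at two nearby exponents and the Marcinkiewicz theorem for Lorentz targets), convolution with an $L^{d/(d-\alpha),\infty}$ kernel maps $L^{p,q}(\R^d)$ into $L^{dp/(d-\alpha p),q}(\R^d)$ for every $q \in [1,\infty]$, which yields the claimed Lorentz estimates $\|\lapms{\alpha} f\|_{L^{dp/(d-\alpha p),q}} \aleq \|f\|_{L^{p,q}}$ and, by substituting $f = \Ds{\alpha} g$, the corresponding Lorentz Sobolev inequality.

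For the endpoint \eqref{eq:limitsobLinfty}, I would argue directly from the potential representation
\[
 \lapms{\alpha} f(x) = c \int_{\R^d} |x-z|^{\alpha - d} f(z)\, dz.
\]
Since $|\cdot|^{\alpha-d} \in L^{d/(d-\alpha),\infty}(\R^d)$ with norm independent of the base point, Hölder's inequality for Lorentz spaces with exponents $\tfrac{1}{d/(d-\alpha)} + \tfrac{1}{d/\alpha} = 1$ and secondary exponents $\infty$ and $1$ gives
\[
 |\lapms{\alpha} f(x)| \aleq \bigl\| |x-\cdot|^{\alpha-d} \bigr\|_{L^{d/(d-\alpha),\infty}} \|f\|_{L^{d/\alpha,1}} \aleq \|f\|_{L^{d/\alpha,1}},
\]
uniformly in $x$, which is exactly \eqref{eq:limitsobLinfty}.

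The only step where one has to be a little careful is the Lorentz Hölder used in the endpoint and the Lorentz--Young step: both require the sharp Hölder/Young inequality on Lorentz spaces (so that the pairing $(p_1,\infty)$ with $(p_2,1)$ is admissible and produces the $L^{\infty}$, respectively $L^{p,q}$, target). I would simply cite \cite[Section 1.4]{GrafakosClassical} for these, as the paper already does for basic Lorentz space properties; once those tools are in hand, no computation beyond what is sketched above is required.
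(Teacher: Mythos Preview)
Your proposal is correct and follows essentially the same route as the paper: both rely on the potential representation $\lapms{\alpha} f = c\,|\cdot|^{\alpha-d}\ast f$ with $|\cdot|^{\alpha-d}\in L^{d/(d-\alpha),\infty}(\R^d)$ and then invoke Young's (respectively H\"older's) inequality in Lorentz spaces, with interpolation to pass between exponents. Your treatment of the endpoint \eqref{eq:limitsobLinfty} via Lorentz H\"older is exactly the $(L^{d/(d-\alpha),\infty},L^{d/\alpha,1})\to L^\infty$ instance of the same convolution bound the paper cites, so there is no substantive difference.
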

All these estimates are consequence of Young's convolution inequality in Lorentz spaces, \cite[Lemma 4.8]{Hunt66} or \cite[Theorem 1.4.25.]{GrafakosClassical} combined with interpolation, using that $\lapms{\alpha} f = c |\cdot|^{\alpha-d} \ast f$ and that $|\cdot|^{\alpha-d} \in L^{\frac{d}{d-\alpha},\infty}(\R^d)$.

\begin{lemma}[Gagliardo-Nirenberg inequality]\label{la:GagliardoNirenberg}
For $\alpha \in (0,1)$, $p \in (1,\infty)$, we have
\[
 \|\Ds{\beta} f\|_{L^{\frac{p}{\beta}}(\R^d)} \aleq \|f\|_{L^\infty}^{1-\beta}\, \|\Dso f\|_{L^{p}(\R^d)}^\beta.
\]
\end{lemma}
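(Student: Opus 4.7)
The plan is to establish a pointwise estimate of the form
\[
|\Ds{\beta} f(x)| \aleq R^{1-\beta}\,M(|\nabla f|)(x) + R^{-\beta}\,\|f\|_{L^\infty},
\]
valid for every $R>0$, and then optimize in $R$ before taking $L^{p/\beta}$ norms. Here I read the final statement with $\beta\in(0,1)$ as the fractional exponent (there appears to be a typographical slip between $\alpha$ and $\beta$). The starting point is the second-difference representation of $\Ds{\beta}$ recalled in the preliminaries,
\[
\Ds{\beta} f(x) = -\frac{c}{2}\int_{\R^d}\frac{f(x+h)+f(x-h)-2f(x)}{|h|^{d+\beta}}\,dh,
\]
which I split according to $|h|<R$ and $|h|\ge R$.

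For the far piece $|h|\ge R$, brute-forcing $|f(x\pm h)-f(x)|\le 2\|f\|_{L^\infty}$ and integrating in polar coordinates yields $\aleq \|f\|_{L^\infty}\,R^{-\beta}$, which is where the condition $\beta>0$ enters. For the near piece $|h|<R$, I write $f(x+h)-f(x)=\int_0^1 h\cdot\nabla f(x+sh)\,ds$, change variables $\tilde h=sh$, and integrate in $s\in(0,1)$, which costs a factor $1/\beta$ coming from $\int_0^1 s^{\beta-1}\,ds$ and reduces matters to bounding $\int_{|h|<R}\frac{|\nabla f(x+h)|}{|h|^{d+\beta-1}}\,dh$. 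A standard dyadic decomposition of that integral over annuli $|h|\sim 2^{-k}R$, summing the geometric series $\sum_k 2^{-k(1-\beta)}$ (convergent precisely because $\beta<1$), majorizes it by $R^{1-\beta}\,M(|\nabla f|)(x)$, where $M$ is the Hardy--Littlewood maximal operator.

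Combining the two pieces gives the advertised pointwise inequality. Optimizing by choosing $R\aeq \|f\|_{L^\infty}/M(|\nabla f|)(x)$ at the point $x$ (and treating the degenerate case $M(|\nabla f|)(x)=0$ or $=\infty$ separately) produces
\[
|\Ds{\beta} f(x)| \aleq \|f\|_{L^\infty}^{1-\beta}\,\bigl(M(|\nabla f|)(x)\bigr)^{\beta}.
\]
Taking the $L^{p/\beta}$ norm and using $\|M(|\nabla f|)\|_{L^p}\aleq\|\nabla f\|_{L^p}$ (the $L^p$-boundedness of $M$ for $p\in(1,\infty)$), followed by Lemma \ref{la:nablalaps} to replace $\|\nabla f\|_{L^p}$ by $\|\Dso f\|_{L^p}$, finishes the proof.

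The main obstacle, and essentially the only nontrivial point, is the dyadic bound for the near piece in terms of the maximal function; the optimization in $R$ and the $L^p$ step are routine. An alternative route would use a Littlewood--Paley decomposition $f=\sum_j P_j f$, bounding low frequencies by $\|f\|_{L^\infty}$ and high frequencies by $\|\Dso f\|_{L^p}$ via Bernstein's inequality, and summing two geometric series; this yields the same result but is slightly heavier machinery than the direct approach above.
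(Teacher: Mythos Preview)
Your proof is correct and takes a genuinely different route from the paper. The paper simply quotes an inhomogeneous Gagliardo--Nirenberg inequality from the function-space literature (Runst), namely $\|f\|_{\dot{F}^{\beta}_{p/\beta,2}} \aleq \|f\|_{L^\infty}^{1-\beta}\|f\|_{W^{1,p}}^\beta$, identifies $\dot{F}^{\beta}_{p/\beta,2}$ with $\Ds{\beta}$ in $L^{p/\beta}$, and then applies the rescaling $f\mapsto f(\lambda\cdot)$, $\lambda\to\infty$, to kill the lower-order $\|f\|_{L^p}$ contribution and recover the homogeneous estimate. Your argument is instead a direct Hedberg-type pointwise bound: split the second-difference integral at scale $R$, control the near part by $R^{1-\beta}M(|\nabla f|)$ via the fundamental theorem of calculus and a dyadic sum, control the far part by $R^{-\beta}\|f\|_{L^\infty}$, optimize, and then invoke the $L^p$-boundedness of $M$ together with \Cref{la:nablalaps}. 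Your approach is more elementary and self-contained (no black-box citation, no Triebel--Lizorkin identification, no scaling trick), and it actually delivers the stronger \emph{pointwise} inequality $|\Ds{\beta} f(x)|\aleq \|f\|_{L^\infty}^{1-\beta}\bigl(M(|\nabla f|)(x)\bigr)^\beta$; the paper's route is shorter on the page but leans on external machinery. Both are standard; yours is arguably the cleaner argument for this specific statement.
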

\begin{proof}
By \cite[Lemma 1, p.329]{Runst86} we have
\[
 \|\Ds{\beta} f\|_{L^{\frac{p}{\beta}}(\R^d)} \aeq \|f\|_{\dot{F}^{\beta}_{\frac{p}{\beta},2}} 
 \aleq \|f\|_{L^\infty(\R^d)}^{1-\beta} \|f\|_{W^{1,p}(\R^d)}^\beta.
\]
Here $\dot{F}$ denotes the homogeneous Triebel-Lizorkin space, cf. \cite{RS96}. 
Applying this result to $f(\lambda \cdot)$ and taking $\lambda \to \infty$ we conclude that 
\[
 \|\Ds{\beta} f\|_{L^{\frac{p}{\beta}}(\R^d)} \aleq \|f\|_{L^\infty(\R^d)}^{1-\beta} \|\nabla f\|_{L^p(\R^d)}^\beta \aeq \|f\|_{L^\infty(\R^d)}^{1-\beta} \|\Dso f\|_{L^p(\R^d)}^\beta 
\]
\end{proof}
For convenience we record a special case of the previous inequality.
\begin{corollary}[Gagliardo-Nirenberg-Sobolev inequality]\label{co:gagnirsob}
Assume 
\begin{enumerate}
 \item $\beta \in (0,\frac{1}{2}]$ and $p \in [\frac{2d}{\beta},\infty)$, or
 \item $\beta \in (\frac{1}{2},1]$ and $p \in [\frac{2d}{\beta},\frac{2d}{2\beta-1}]$.
 \end{enumerate}
 Then for $\theta = 2\brac{\beta - \frac{d}{p}} \in [\beta,1]$ we have
\[ 
 \|\Ds{\beta} f\|_{L^{p}(\R^d)} \aleq \|f\|_{L^\infty(\R^d)}^{1-\theta}\, \|\Dso f\|_{L^{2d}(\R^d)}^\theta.
\]
\end{corollary}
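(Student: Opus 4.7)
The plan is to combine the Gagliardo-Nirenberg inequality from \Cref{la:GagliardoNirenberg} with the Sobolev embedding in \Cref{la:sobolev}, chained through the intermediate exponent $\theta$. The factorization $\Ds{\beta} = \lapms{\theta - \beta} \Ds{\theta}$ is legal because $\theta \geq \beta$ by hypothesis, and the identity $\theta = 2(\beta - d/p)$ is precisely what one obtains by requiring the two sides of the desired inequality to scale identically under dilations $f \mapsto f(\lambda \cdot)$.

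First I would apply \Cref{la:GagliardoNirenberg} with the exponent $\theta \in (0,1]$ in place of $\beta$ and with base exponent $2d$ in place of $p$, obtaining
\[
 \|\Ds{\theta} f\|_{L^{2d/\theta}(\R^d)} \aleq \|f\|_{L^{\infty}(\R^d)}^{1-\theta}\, \|\Dso f\|_{L^{2d}(\R^d)}^{\theta}.
\]
The boundary case $\theta = 1$ reduces to a trivial identity, and $\theta = 0$ cannot occur since $\theta \geq \beta > 0$.

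Next I would write $\|\Ds{\beta} f\|_{L^p} = \|\lapms{\theta-\beta} \Ds{\theta} f\|_{L^p}$ and apply \Cref{la:sobolev} to the Riesz potential $\lapms{\theta-\beta}$, promoting the integrability exponent from $q = 2d/\theta$ to $p$. A short algebraic check using $\theta = 2(\beta - d/p)$ shows that $dq/(d - (\theta-\beta)q) = 2d/(2\beta - \theta) = p$, so the exponents line up exactly. The admissibility conditions in \Cref{la:sobolev} also hold: $q > 1$ follows from $\theta \leq 1 < 2d$, and $q < d/(\theta-\beta)$ rearranges to $\theta < 2\beta$, equivalent to $d/p > 0$. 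Chaining the two estimates yields the claim.

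The only real obstacle here is bookkeeping: one must verify that the ranges of $p$ imposed in cases (1) and (2), together with the definition of $\theta$, force $\theta \in [\beta, 1]$ and keep all the Sobolev and Gagliardo-Nirenberg hypotheses in force. Once that is done, the two-step chain delivers the inequality without further work.
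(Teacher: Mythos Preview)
Your argument is correct and is in fact more direct than the paper's own proof. The paper proceeds by a case split: for $\beta > \tfrac{1}{2}$ it interpolates via H\"older between the endpoints $L^{2d/\beta}$ and $L^{2d/(2\beta-1)}$, applying \Cref{la:GagliardoNirenberg} to the first factor and \Cref{la:sobolev} to the second; for $\beta \le \tfrac{1}{2}$ it first uses \Cref{la:GagliardoNirenberg} to raise the differentiation order from $\beta$ to some $\gamma > \tfrac{1}{2}$ and then invokes the already-established case. Your route avoids both the H\"older interpolation and the case distinction by applying \Cref{la:GagliardoNirenberg} once at the level $\theta$ (rather than $\beta$) and then a single Sobolev step $\lapms{\theta-\beta}: L^{2d/\theta} \to L^{p}$. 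The algebraic check $\theta < 2\beta$ you give (equivalent to $p < \infty$) is exactly what guarantees the Sobolev exponent is admissible, and the boundary cases $\theta = \beta$ and $\theta = 1$ are handled as you indicate. The net effect is the same inequality; your approach is simply a cleaner factorization of the same two ingredients.
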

\begin{proof}
First we consider the case $\beta > 1/2$.

Fix $\sigma \in [0,1]$ such that $\frac{1}{p} =(1-\sigma)\frac{\beta}{2d} + \sigma\frac{2\beta-1}{2d}$, i.e. $\sigma = \frac{\beta p-2d}{p(1-\beta)}$. Combining H\"older's inequality, Sobolev inequality, \Cref{la:sobolev}, and Gagliardo-Nirenberg inequality, \Cref{la:GagliardoNirenberg}, 
\[
\begin{split}
 \|\Ds{\beta} f\|_{L^{p}(\R^d)} =& \||\Ds{\beta} f|^{1-\sigma}\, |\Ds{\beta} f|^\sigma\|_{L^{p}(\R^d)}\\
\aleq& \|\Ds{\beta} f \|_{L^{\frac{2d}{\beta}}(\R^d)}^{1-\sigma}\ \|\Ds{\beta} f\|^\sigma_{L^{\frac{2d}{2\beta-1}}(\R^d)}\\
\aleq& \brac{\|f\|_{L^\infty(\R^d)}^{1-\beta} \|\Dso f \|_{L^{2d}(\R^d)}^\beta }^{1-\sigma}\ \|\Dso f\|^\sigma_{L^{2d}(\R^d)}\\
=&\|f\|_{L^\infty(\R^d)}^{(1-\beta)(1-\sigma)}\,  \|\Dso f\|_{L^{2d}(\R^d)}^{\beta (1-\sigma)+\sigma}\\
\end{split}
 \]
Thus, for 
\[
 \theta = \beta (1-\sigma)+\sigma = 2\brac{\beta-  \frac{d}{p}}
 \]
we conclude the case $\beta > \frac{1}{2}$.

Assume now $\beta \in (0,\frac{1}{2}]$ and $p \in [\frac{2d}{\beta},\infty)$. Pick $\gamma \in (\frac{1}{2},1]$ such that $\frac{\beta}{\gamma} p \leq \frac{2d}{2\gamma -1}$. Then we have with previous estimate (with $\theta = \frac{\gamma}{\beta} 2\brac{\beta- \frac{d}{p}}$),
\[
\begin{split}
 \|\Ds{\beta} f\|_{L^{p}(\R^d)} \aleq& \|f\|_{L^{\infty}}^{1-\frac{\beta}{\gamma}}\, \brac{\|\Ds{\gamma} f\|_{L^{\frac{\beta}{\gamma} p}(\R^d)} }^{\frac{\beta}{\gamma}} \\
 \aleq&\|f\|_{L^{\infty}}^{1-\frac{\beta}{\gamma}}\, \brac{\|f\|_{L^\infty(\R^d)}^{1-\theta}\, \|\Dso f\|_{L^{2d}(\R^d)}^{\theta} }^{\frac{\beta}{\gamma}}\\
 =&\|f\|_{L^{\infty}}^{1- 2\brac{\beta- \frac{d}{p}}}\, \|\Dso f\|_{L^{2d}(\R^d)}^{2\brac{\beta- \frac{d}{p}}}.
\end{split}
 \]
 \end{proof}

\subsection{Leibniz rule commutators}
In the following we discuss mostly Leibniz rule type estimates. 
The Leibniz rule operator for $\Ds{s}$ will be denoted by
\[
 H_{\Ds{s}}(f,g) := \Ds{s}(fg) - f \Ds{s} g - (\Ds{s} f) g.
\]
As a standing assumption, we are going to assume that all functions belong to $C_c^\infty(\R^d)$. By density arguments we can apply these inequalities to the situation in the next section. Let us stress that we make no effort to obtain the sharpest possible result with respect to $L^p$-spaces (in particular we generally rule out $p=1$ and $p=\infty$) but instead focus on the applicability for our purposes.

By a direct computation we have the following useful formula, which has been observed by many authors.
\begin{lemma}\label{la:Hdssformula}
Let $s \in (0,2)$ then for some $c = c(s,n)$,
\[
 H_{\Ds{s}}(f,g)(x) = c\int_{\R^d} \frac{(f(x)-f(y))(g(x)-g(y))}{|x-y|^{d+s}}\, dy
\]
\end{lemma}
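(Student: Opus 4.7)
The plan is to derive the formula directly from the symmetric second-difference representation of $\Ds{s}$ given just before the lemma, which is valid for all $s \in (0,2)$ and for smooth compactly supported inputs avoids any principal-value subtleties. Writing the same representation for $f$, $g$, and $fg$, the claim reduces to a purely algebraic identity for second differences of a product.

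Concretely, I would start from
\[
\Ds{s}(fg)(x) = -\frac{c}{2}\int_{\R^d} \frac{(fg)(x+h)+(fg)(x-h)-2(fg)(x)}{|h|^{d+s}}\, dh,
\]
and decompose the numerator via the product-rule identity
\[
f(x+h)g(x+h)-f(x)g(x) = \bigl(f(x+h)-f(x)\bigr)\bigl(g(x+h)-g(x)\bigr) + f(x)\bigl(g(x+h)-g(x)\bigr) + g(x)\bigl(f(x+h)-f(x)\bigr),
\]
and the analogous identity with $h$ replaced by $-h$. Summing these two expansions, the middle and last terms contribute exactly $f(x)\bigl(g(x+h)+g(x-h)-2g(x)\bigr) + g(x)\bigl(f(x+h)+f(x-h)-2f(x)\bigr)$, which after integration against $-\tfrac{c}{2}|h|^{-d-s}\, dh$ produce $f(x)\Ds{s}g(x)$ and $g(x)\Ds{s}f(x)$ respectively, thereby matching the two subtracted terms in the definition of $H_{\Ds{s}}(f,g)$.

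The remaining contribution is
\[
-\frac{c}{2}\int_{\R^d} \frac{\bigl(f(x+h)-f(x)\bigr)\bigl(g(x+h)-g(x)\bigr) + \bigl(f(x-h)-f(x)\bigr)\bigl(g(x-h)-g(x)\bigr)}{|h|^{d+s}}\, dh.
\]
Changing variables $h \mapsto -h$ in the second summand shows the two integrands are equal, so the expression collapses to
\[
-c\int_{\R^d} \frac{\bigl(f(x+h)-f(x)\bigr)\bigl(g(x+h)-g(x)\bigr)}{|h|^{d+s}}\, dh,
\]
and the substitution $y = x+h$ yields the claimed kernel representation (with the constant absorbed, up to sign, into the final $c$).

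The only point requiring care is the absolute convergence of the various integrals, so that the splitting into $f\Ds{s}g$, $g\Ds{s}f$, and the remainder is legitimate. Since $f,g \in C_c^\infty$, each difference $f(x+h)-f(x)$ is $O(|h|)$ near $h=0$ and bounded with compact $x$-support globally, so the product of two such differences gives $O(|h|^2)$ near zero (offsetting $|h|^{-d-s}$ for $s<2$) and compactly supported decay at infinity. Thus every integral in the argument is absolutely convergent, which is the only real technical point; the remainder of the proof is the bookkeeping sketched above.
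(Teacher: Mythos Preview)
Your proof is correct and is precisely the ``direct computation'' the paper alludes to; the paper itself does not spell out any details beyond that remark, so there is nothing further to compare.
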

%
We now begin by stating several useful estimates for the Leibniz rule operator, most of them are probably known to some experts -- and all of them can be proven via standard methods.
\begin{lemma}
For $\sigma \in (0,\alpha)$, $\alpha \in (0,1]$ we have for any $p,p_1,p_2 \in (1,\infty)$ with $\frac{1}{p} = \frac{1}{p_1} + \frac{1}{p_2}$,
\begin{equation}\label{eq:comm:5}
  \|H_{\Ds{\alpha}}(f, g)\|_{L^{p}(\R^d)} \aleq \|\Ds{\sigma} f\|_{L^{p_1}(\R^d)}\, \|\Ds{\alpha-\sigma} g\|_{L^{p_2}(\R^d)}.
\end{equation}
\end{lemma}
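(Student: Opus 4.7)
The proof proposal is to exploit the pointwise formula of \Cref{la:Hdssformula} together with a Cauchy--Schwarz splitting and the Strichartz--Stein square function characterization of the Bessel potential spaces.

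First, by \Cref{la:Hdssformula} we may bound
\[
|H_{\Ds{\alpha}}(f,g)(x)| \aleq \int_{\R^d} \frac{|f(x)-f(y)|\,|g(x)-g(y)|}{|x-y|^{d+\alpha}}\,dy.
\]
The idea is to factor the weight $|x-y|^{-(d+\alpha)}$ symmetrically according to the splitting $\alpha = \sigma + (\alpha-\sigma)$, writing
\[
\frac{|f(x)-f(y)|\,|g(x)-g(y)|}{|x-y|^{d+\alpha}} = \frac{|f(x)-f(y)|}{|x-y|^{\frac{d}{2}+\sigma}}\cdot \frac{|g(x)-g(y)|}{|x-y|^{\frac{d}{2}+(\alpha-\sigma)}},
\]
and then apply Cauchy--Schwarz in the $y$ integral. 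This yields the pointwise bound
\[
|H_{\Ds{\alpha}}(f,g)(x)| \aleq G_{\sigma}(f)(x)\, G_{\alpha-\sigma}(g)(x),
\]
where we set
\[
G_s(h)(x) \defeq \brac{\int_{\R^d} \frac{|h(x)-h(y)|^2}{|x-y|^{d+2s}}\, dy}^{1/2}.
\]

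Next, I would apply H\"older's inequality in the spatial variable with the exponent relation $\frac{1}{p} = \frac{1}{p_1}+\frac{1}{p_2}$ to obtain
\[
\|H_{\Ds{\alpha}}(f,g)\|_{L^p(\R^d)} \aleq \|G_\sigma(f)\|_{L^{p_1}(\R^d)}\, \|G_{\alpha-\sigma}(g)\|_{L^{p_2}(\R^d)}.
\]
To conclude, I would invoke the Strichartz--Stein characterization of the homogeneous Bessel potential space: for any $s \in (0,1)$ and $q \in (1,\infty)$,
\[
\|G_s(h)\|_{L^q(\R^d)} \aeq \|\Ds{s} h\|_{L^q(\R^d)},
\]
see e.g.\ the classical work of Strichartz and the presentation in \cite{RS96}. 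Since by assumption $\alpha \in (0,1]$ and $\sigma \in (0,\alpha)$, both exponents $\sigma$ and $\alpha-\sigma$ lie in $(0,1)$, so the characterization applies to both factors and we obtain \eqref{eq:comm:5}.

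The only step requiring care is the Strichartz--Stein equivalence for $G_s$, which is a nontrivial harmonic analysis result but a standard one; I expect this to be the main technical input, while everything else reduces to Cauchy--Schwarz and H\"older. I would note that the approach fails at the endpoints $\sigma = 0$ or $\sigma = \alpha$ (where one factor becomes an $L^\infty$ or pointwise bound that the Strichartz square function cannot capture) and also at $\alpha = 1$ with $\sigma$ reaching the endpoint, consistent with the strict assumption $\sigma \in (0,\alpha)$.
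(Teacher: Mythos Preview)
Your argument is correct. The paper itself gives no proof of this lemma, only citing \cite{LScomm} and \cite[Theorem~3.4.1]{Ingmanns20}; the technique illustrated from those references in the proof of \Cref{la:commie2} proceeds via the Caffarelli--Silvestre type harmonic extension to $\R^{d+1}_+$ and an integration by parts there, which is a genuinely different mechanism from your pointwise Cauchy--Schwarz route.

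Your approach is in fact the same one the paper later uses for \Cref{la:commies3toy} (the absolute-value version of the same integrand): split the kernel, Cauchy--Schwarz in $y$, H\"older in $x$, then identify each factor with a Triebel--Lizorkin / Bessel-potential norm. One point worth flagging: in \Cref{la:commies3toy} the paper restricts to $p_i\ge 2$ and cites \cite{P17} for the identification $\|G_{s}h\|_{L^{p_i}}\aeq\|\Ds{s}h\|_{L^{p_i}}$, whereas you invoke the classical Strichartz--Stein square-function characterization to cover the full range $p_i\in(1,\infty)$. That extension is legitimate --- for inner exponent $2$ the equivalence does hold for all $p_i\in(1,\infty)$ (e.g.\ via vector-valued Calder\'on--Zygmund theory, with the $p=2$ case by Plancherel) --- but since the paper's own treatment of this same square function is more conservative, you should make the reference for the full range explicit rather than leave it implicit. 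Your route is more elementary than the extension-based argument and, once the square-function input is granted, shorter.
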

For a proof of \eqref{eq:comm:5} see e.g. \cite{LScomm}, or \cite[Theorem 3.4.1]{Ingmanns20}.

We can also estimate a differentiated version of the Leibniz rule operator.
\begin{lemma}\label{la:commie2}
Let $\alpha \in (0,1)$ and $\beta \in (0,1)$. Pick any $\gamma \in (0,1)$ such that $\alpha+\beta-\gamma \in (0,1)$, and $p,p_1,p_2 \in (1,\infty)$ such that 
 \[
  \frac{1}{p} = \frac{1}{p_1} + \frac{1}{p_2}.
 \]
Then
  \begin{equation}\label{eq:comm:123}
  \|\Ds{\alpha} H_{\Ds{\beta}} (f,g)\|_{L^{p}(\R^d)} \aleq \|\Ds{\gamma_1} f\|_{L^{p_1}}\, \|\Ds{\alpha +\beta-\gamma_1} g\|_{L^{p_2}}.
 \end{equation}
 
\end{lemma}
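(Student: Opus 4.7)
Plan: The natural approach is to decompose $\Ds{\alpha}H_{\Ds{\beta}}(f,g)$ algebraically into pieces already controlled by the commutator estimate \eqref{eq:comm:5}, H\"older's inequality, and the Sobolev embedding (\Cref{la:sobolev}). Expanding the definition $H_{\Ds{s}}(u,v)=\Ds{s}(uv)-u\Ds{s}v-v\Ds{s}u$ and repeatedly using $\Ds{\alpha}\Ds{\beta}=\Ds{\alpha+\beta}$, a direct computation produces the pointwise identity
\begin{align*}
\Ds{\alpha}H_{\Ds{\beta}}(f,g) = H_{\Ds{\alpha+\beta}}(f,g) &- H_{\Ds{\alpha}}(f,\Ds{\beta}g) - H_{\Ds{\alpha}}(g,\Ds{\beta}f)\\
&- (\Ds{\alpha}f)(\Ds{\beta}g) - (\Ds{\alpha}g)(\Ds{\beta}f),
\end{align*}
which I would then estimate one summand at a time in $L^p(\R^d)$, aiming for the bound $\|\Ds{\gamma}f\|_{L^{p_1}}\|\Ds{\alpha+\beta-\gamma}g\|_{L^{p_2}}$.

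The two mixed commutators fall directly under \eqref{eq:comm:5}: applied to $H_{\Ds{\alpha}}(f,\Ds{\beta}g)$ with split $\sigma$ on $f$ and $\alpha-\sigma$ on the second slot and using $\Ds{\alpha-\sigma}\Ds{\beta}g = \Ds{\alpha+\beta-\sigma}g$, the choice $\sigma=\gamma$ yields exactly the target right-hand side; the analogous split (with the roles of $f$ and $g$ interchanged) handles $H_{\Ds{\alpha}}(g,\Ds{\beta}f)$. For the two bilinear products $(\Ds{\alpha}f)(\Ds{\beta}g)$ and $(\Ds{\alpha}g)(\Ds{\beta}f)$ I would apply H\"older and then \Cref{la:sobolev} to interchange $\Ds{\alpha}f$ with $\Ds{\gamma}f$ and $\Ds{\beta}g$ with $\Ds{\alpha+\beta-\gamma}g$ at the cost of adjusting the intermediate $L^q$ exponents; the constraint $\gamma,\alpha+\beta-\gamma\in(0,1)$, together with $\tfrac{1}{p}=\tfrac{1}{p_1}+\tfrac{1}{p_2}$, is what keeps all intermediate indices in the Sobolev-admissible range $(1,\infty)$.

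The main obstacle is the first summand $H_{\Ds{\alpha+\beta}}(f,g)$, since $\alpha+\beta$ may lie in $(1,2)$ and hence falls outside the range in which \eqref{eq:comm:5} is stated. I would resolve this by first extending \eqref{eq:comm:5} to orders $s\in(0,2)$, which is natural since the pointwise formula of \Cref{la:Hdssformula} stays valid on this range. The extension can be obtained either by adapting the argument of \cite{LScomm, Ingmanns20} with only minor modifications, or via a Littlewood-Paley paraproduct decomposition exploiting the cancellation $|\xi_1+\xi_2|^s - |\xi_1|^s - |\xi_2|^s = O(|\xi_1|^{s-1}|\xi_2|)$ in the regime $|\xi_2|\ll|\xi_1|$ together with Bernstein's inequality. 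Once the extension is in hand, applying it with the split $\sigma=\gamma$ delivers the required bound, and the hypotheses $\gamma\in(0,1)$, $\alpha+\beta-\gamma\in(0,1)$ are exactly the admissibility conditions for that choice of split.
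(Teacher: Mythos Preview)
Your algebraic identity is correct, but the term-by-term estimation breaks down at the bare products $(\Ds{\alpha}f)(\Ds{\beta}g)$ and $(\Ds{\beta}f)(\Ds{\alpha}g)$. To control $\|(\Ds{\alpha}f)(\Ds{\beta}g)\|_{L^p}$ by $\|\Ds{\gamma}f\|_{L^{p_1}}\|\Ds{\alpha+\beta-\gamma}g\|_{L^{p_2}}$ via H\"older plus \Cref{la:sobolev} as you propose, you would need both $\|\Ds{\alpha}f\|_{L^{q_1}}\aleq\|\Ds{\gamma}f\|_{L^{p_1}}$ and $\|\Ds{\beta}g\|_{L^{q_2}}\aleq\|\Ds{\alpha+\beta-\gamma}g\|_{L^{p_2}}$ for some H\"older pair $(q_1,q_2)$. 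But Sobolev embedding only trades \emph{more} derivatives for \emph{higher} integrability: the first inequality forces $\gamma\geq\alpha$ while the second forces $\gamma\leq\alpha$, so the scheme works only at the single value $\gamma=\alpha$. For any other $\gamma$ in the stated range one of the two steps is a reverse Sobolev inequality and is simply false---take $f$ a bump modulated to frequency $N$ and $g$ a fixed bump: the single product scales like $N^{\alpha}$ whereas the target right-hand side scales like $N^{\gamma}$. The same range restriction blocks the direct use of \eqref{eq:comm:5} with the split $\sigma=\gamma$ on $H_{\Ds{\alpha}}(f,\Ds{\beta}g)$, since that lemma requires $\sigma\in(0,\alpha)$. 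In short, your decomposition destroys the commutator cancellation that makes the full expression estimable, and the five pieces are not individually bounded by the desired right-hand side.

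The paper avoids any such decomposition. It dualizes to $\int_{\R^d} H_{\Ds{\beta}}(f,g)\,\Ds{\alpha}h$, passes to the $\beta$-harmonic (Caffarelli--Silvestre) extensions $F,G,H$ of $f,g,\Ds{\alpha}h$, and invokes the trilinear integration-by-parts bound $\bigl|\int_{\R^d} H_{\Ds{\beta}}(f,g)\,\tilde h\bigr|\aleq\int_{\R^{d+1}_+}t^{1-\beta}|\nabla F|\,|\nabla G|\,|H|$. The pointwise estimate $|H(x,t)|\aleq t^{-\alpha}\mathcal{M}h(x)$ then absorbs the extra $\alpha$ derivatives into a power of $t$, after which a single Cauchy--Schwarz in $t$ splits the weight $t^{1-\alpha-\beta}$ as $t^{1/2-\gamma}\cdot t^{1/2-(\alpha+\beta-\gamma)}$, and the square-function characterization of $\dot F^{s}_{p,2}$ delivers $\|\Ds{\gamma}f\|_{L^{p_1}}\|\Ds{\alpha+\beta-\gamma}g\|_{L^{p_2}}$. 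The freedom in $\gamma$ comes entirely from how one splits the $t$-weight; no isolated product $(\Ds{\alpha}f)(\Ds{\beta}g)$ ever appears, so the obstruction you would face does not arise.
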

\begin{proof}[Proof of \eqref{eq:comm:123}]
This can be proven with techniques from \cite[Theorem 7.1.]{LScomm}, see also the presentation in \cite[Theorem 3.4.1]{Ingmanns20}. By duality we have 
\[
 \|\Ds{\alpha} H_{\Ds{\beta}} (f,g)\|_{L^{p}(\R^d)} \aleq \int_{\R^d} H_{\Ds{\beta}} (f,g)\, \Ds{\alpha} h
\]
for some $h \in C_c^\infty(\R^d)$  with $\|h\|_{L^{p'}(\R^d)} \leq 1$. Set $\tilde{h} := \Ds{\alpha} h$ and, as in \cite{LScomm,Ingmanns20}, let $F$, $G$, $H$ be the $\beta$-harmonic extension of $f$, $g$, $\tilde{h}$, respectively. That is, cf. \cite[(3.1.1)]{Ingmanns20},
\[
\begin{cases}
\div_{\R^{d+1}} (t^{1-s} \nabla_{\R^{d+1}} F) = 0 \quad &\text{for $(x,t) \in \R^{d+1}_+$}\\
\lim_{t \to 0^+}  F(x,t) = f(x)\quad &\text{for $x \in \R^{d}$}\\
\lim_{t \to \infty}  F(x,t) = 0\quad &\text{for $x \in \R^{d}$}\\
 \end{cases}
\]
An integration by parts argument, see \cite[Proof of Theorem 3.4.1]{Ingmanns20}, implies that
\[
 \abs{\int_{\R^d} H_{\Ds{\beta}} (f,g)\, h} \aleq \int_{\R^{d+1}_+} t^{1-\beta}\, |\nabla F|\, |\nabla G|\, |H|.
\]
Observe that by \cite[(10.6)]{LScomm}, we have the estimate
\[
 |H(x,t)| \leq t^{-\alpha} \mathcal{M} (\lapms{\alpha} \tilde{h})(x) = t^{-\alpha} \mathcal{M} h(x),
\]
where $\mathcal{M}$ is the Hardy-Littlewood maximal function. That is, we have found
\[
\begin{split}
 &\abs{\int_{\R^d} H_{\Ds{\beta}} (f,g)\, \Ds{\alpha} h} \\
 \aleq&\int_{\R^{d}} \mathcal{M} h(x)  \int_{t=0}^\infty t^{1-\beta-\alpha} |\nabla F(x,t)|\, |\nabla G(x,t)|\, dt\, \, dx\\
 \aleq&\|\mathcal{M} h\|_{L^{p'}(\R^d)}\, \norm{\brac{\int_{0}^\infty \brac{t^{\frac{1}{2}-\gamma} |\nabla F(x,t)|}^2 dt}^{\frac{1}{2}} }_{L^{p_1}(\R^d)}\, \norm{\brac{\int_{t=0}^\infty \brac{t^{\frac{1}{2}-(\beta+\alpha-\gamma)} |\nabla G(x,t)|}^2\, dt}^{\frac{1}{2}}}_{L^{p_2}(\R^d)} \\
 \aleq&\|h\|_{L^{p'}(\R^d)}\, [f]_{\dot{F}^\gamma_{p_1,2}(\R^d)}\, [g]_{\dot{F}^{\beta + \alpha -\gamma}_{p_2,2}(\R^d)}\\
 \aeq&\|h\|_{L^{p'}(\R^d)}\, \|\Ds{\gamma} f\|_{L^{p_1}(\R^d)}\, \|\Ds{\beta+\alpha-\gamma}g\|_{L^{p_2}(\R^d)}
 \end{split}
\]
In the second to last inequality we used the boundedness of the maximal function $\mathcal{M}$ on $L^{p'}$ and the identification of Triebel spaces, \cite[Theorem 10.8]{LScomm}. In the last step we used that by Littlewood-Paley-theorem, \cite[Theorem 1.3.8.]{GrafakosModern}, $[f]_{\dot{F}^\gamma_{p_1,2}} \aeq \|\Ds{\gamma} f\|_{L^{p_1}(\R^d)}$. We can conclude.
%
%
%
%
\end{proof}

We will also need an estimate for a double commutator. We are not aware of this estimate in the literature, but it can be obtained with the usual paraproduct approach.
\begin{lemma}\label{la:doublecommie}
Let $\alpha, \beta \in (0,1]$, and consider the double commutator
 \[
 \tilde{H}_{\Ds{\beta},\Ds{\alpha}}(f,g) := \Ds{\beta} H_{\Ds{\alpha}}(f,g) - H_{\Ds{\alpha}} (\Ds{\beta} f, g) - H_{\Ds{\alpha}} (f, \Ds{\beta} g).
\]
Then for any $\gamma \in (0,\alpha + \beta)$ and any $p,p_1,p_2 \in (1,\infty)$ with $\frac{1}{p} = \frac{1}{p_1} + \frac{1}{p_2}$ we have
\[
 \|\tilde{H}_{\Ds{\beta},\Ds{\alpha}}(f,g)\|_{L^{p}(\R^d)} \aleq \|\Ds{\gamma}  f\|_{L^{p_1}(\R^d)}\, \|\Ds{\alpha+\beta-\gamma}  f\|_{L^{p_2}(\R^d)}.
\]
\end{lemma}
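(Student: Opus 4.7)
My plan is to follow the Littlewood--Paley/Bony paraproduct approach underlying the proofs of the other commutator estimates invoked above; compare \cite{LScomm} and \cite[Theorem 3.4.1]{Ingmanns20}. Decompose $fg$ via a dyadic partition as
\[
 fg = \sum_{j} S_{j-2}f \cdot \Delta_j g + \sum_{j} \Delta_j f \cdot S_{j-2} g + \sum_{j} \Delta_j f \cdot \tilde\Delta_j g,
\]
where $(\Delta_j)_j$ is a standard Littlewood--Paley partition of unity, $S_j = \sum_{k < j}\Delta_k$, and $\tilde\Delta_j$ is a fattened block. Since the pointwise formula of \Cref{la:Hdssformula} makes $H_{\Ds{\alpha}}(\cdot,\cdot)$ symmetric in its two entries, one sees immediately that $\tilde{H}_{\Ds{\beta},\Ds{\alpha}}(f,g) = \tilde{H}_{\Ds{\beta},\Ds{\alpha}}(g,f)$, so it suffices to bound the low-high piece and the resonant piece.

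For the low-high piece $S_{j-2}f\cdot \Delta_j g$, I would expand the symbol of $H_{\Ds{\alpha}}$ around the low frequency: schematically,
\[
 H_{\Ds{\alpha}}(S_{j-2}f,\Delta_j g) \sim \sum_{k \geq 1} 2^{-kj}\, (\nabla^k S_{j-2}f)\cdot \mathcal{M}_k \Delta_j g,
\]
where each $\mathcal{M}_k$ is an $L^p$-bounded Fourier multiplier of order $\alpha-k$ acting on the high-frequency factor. The leading $k=1$ term is a genuine derivation in $f$ and in $g$, and so it is exactly annihilated by the three-term combination defining $\tilde{H}_{\Ds{\beta},\Ds{\alpha}}$. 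The remaining terms $k\geq 2$ carry an extra factor $2^{-j}$, which is the additional derivative available to distribute between $f$ and $g$, matching the total budget $\alpha+\beta$ in the claimed bound. Summing in $j$ via the Fefferman--Stein vector-valued maximal inequality and the identification $\|\Ds{\gamma} u\|_{L^p} \aeq [u]_{\dot{F}^\gamma_{p,2}}$ from \cite[Theorem 10.8]{LScomm} and \cite[Theorem 1.3.8]{GrafakosModern} closes this regime for any $\gamma\in (0,\alpha+\beta)$.

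The resonant piece $\sum_j \Delta_j f \cdot \tilde\Delta_j g$ admits no low-frequency expansion, but each product is frequency-localized in $|\xi|\aleq 2^j$. On each block I would apply \eqref{eq:comm:5} and \eqref{eq:comm:123} directly to $H_{\Ds{\alpha}}(\Delta_j f,\tilde\Delta_j g)$ and $\Ds{\beta}H_{\Ds{\alpha}}(\Delta_j f,\tilde\Delta_j g)$, and sum via Cauchy--Schwarz in $j$ combined with the Littlewood--Paley square function characterization of the Triebel--Lizorkin norms. The principal difficulty throughout is making the second-order cancellation of $\tilde{H}_{\Ds{\beta},\Ds{\alpha}}$ symbolically explicit: one has to verify that the leading-order $k=1$ term in the low-high Taylor expansion of $H_{\Ds{\alpha}}$ really is a derivation in each argument, so that applying $\Ds{\beta}$ and subtracting the two $\Ds{\beta}$-differentiated $H_{\Ds{\alpha}}$ terms leaves no residue at the naive order $\alpha+\beta-1$. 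Once this symbol identity is in hand, the rest of the argument amounts to routine paraproduct bookkeeping.
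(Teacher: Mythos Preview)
Your overall strategy---paraproduct decomposition with separate treatment of the low-high and diagonal regimes---is the same as the paper's. The gap is in your description of the cancellation mechanism in the low-high regime. The schematic expansion you write is the Coifman--Meyer expansion for the commutator $[\Ds{\alpha},\cdot]$, not for $H_{\Ds{\alpha}}$: when $f$ sits at low frequency $\eta$ and $g$ at high frequency $\zeta$, the symbol $|\eta+\zeta|^\alpha-|\zeta|^\alpha-|\eta|^\alpha$ of $H_{\Ds{\alpha}}$ is dominated (for $\alpha<1$) by $-|\eta|^\alpha$, not by the first-order Taylor term $\alpha|\zeta|^{\alpha-2}\zeta\cdot\eta$. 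More importantly, the three-term combination defining $\tilde{H}$ does \emph{not} annihilate any leading piece: applied to a bilinear form $B(f,g)$ it yields $\Ds{\beta}B(f,g)-B(\Ds{\beta}f,g)-B(f,\Ds{\beta}g)$, which vanishes only if $\Ds{\beta}$ satisfies the Leibniz rule on $B$---and it does not. What actually happens is that this combination turns the leading piece of $H_{\Ds{\alpha}}$ into an $H_{\Ds{\beta}}$-type expression, gaining a further factor $|\eta|^\beta$; this is iteration, not annihilation.

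The paper bypasses this bookkeeping by computing the bilinear symbol of $\tilde{H}_{\Ds{\beta},\Ds{\alpha}}$ directly and observing the exact factorization
\[
 k(\xi,\eta)=\bigl(|\xi|^\beta-|\eta|^\beta-|\xi-\eta|^\beta\bigr)\,\bigl(|\xi|^\alpha-|\eta|^\alpha-|\xi-\eta|^\alpha\bigr),
\]
i.e.\ the symbol of $\tilde{H}$ is the product of the symbols of $H_{\Ds{\beta}}$ and $H_{\Ds{\alpha}}$. In the high-low regime $|\xi-\eta|\ll|\eta|$ each factor is $O(|\xi-\eta|^{s})$ for $s=\alpha,\beta$, so the product is $O(|\xi-\eta|^{\alpha+\beta})$; a Taylor expansion of the smooth remainder and standard Littlewood--Paley summation then give the claim. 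This factorization is the missing idea in your sketch; once you have it, the ``second-order cancellation'' is manifest and the rest of your outline (including the diagonal treatment) goes through essentially as written.
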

\begin{proof}
This can be proven in a very similar fashion to \cite[Section 3]{DLR2011}, we only sketch the main steps.

Denote by $\dot{F}^{s}_{p,q}$ the homogeneous Triebel spaces, and by $\Delta_k$ the Littlewood-Paley projection operator, \cite[1.3.2]{GrafakosModern}. Then by Littlewood-Paley theorem, \cite[Theorem 1.3.8.]{GrafakosModern}, and duality,
for some $\psi\in C_c^\infty(\R^d)$ with $[\psi]_{\dot{F}^{0}_{p',2}}(\R^d) \aleq 1$,
 \[
\begin{split}
 \norm{\tilde{H}_{\Ds{\beta},\Ds{\alpha}}(f,g)}_{L^{p}(\R^d)} \aeq& \norm{\tilde{H}_{\Ds{\beta},\Ds{\alpha}}(f,g)}_{\dot{F}^{0}_{p,2}(\R^d)}\\
 \aeq& \sum_{k \in \Z} \int_{\R^d} \tilde{H}_{\Ds{\beta},\Ds{\alpha}}(f,g)\, \Delta_k \psi.
 \end{split}
\]
Here $\Delta_k$ denotes the Littlewood-Paley projection operator onto the $2^k$-frequency.
With the usual paraproduct argument, denoting by $\Delta^\ell := \sum_{\tilde{\ell} \leq \ell}\Delta_{\tilde{\ell}}$,
 \begin{equation}\label{eq:paraprod:2345}
\begin{split}
 \|\tilde{H}_{\Ds{\beta},\Ds{\alpha}}(f,g)\|_{L^{p_1}(\R^d)} \aeq& 
 \sum_{j \in \Z} \sum_{k \aeq j} \int_{\R^d} \tilde{H}_{\Ds{\beta},\Ds{\alpha}}(\Delta_j f,\Delta^{j-4}g)\, \Delta_k \psi\\
 &+\sum_{j \in \Z} \sum_{k \aeq j} \int_{\R^d} \tilde{H}_{\Ds{\beta},\Ds{\alpha}}(\Delta^{j-4} f,\Delta_{j}g)\, \Delta_k \psi\\
 &+\sum_{j \in \Z} \sum_{\ell \aeq j} \int_{\R^d} \tilde{H}_{\Ds{\beta},\Ds{\alpha}}(\Delta_{j} f,\Delta_{\ell}g)\, \Delta^{j-10} \psi\\
  &+\sum_{j \in \Z} \sum_{\ell \aeq k \aeq  j} \int_{\R^d} \tilde{H}_{\Ds{\beta},\Ds{\alpha}}(\Delta_{j} f,\Delta_{\ell}g)\, \Delta_k \psi.
 \end{split}
\end{equation}
Here, by a slight abuse of notation we say for indices $\ell, k$ that $\ell \aeq k$ if $k-c\leq \ell \leq k + c$ for some constant $c > 0$.

The last term in \eqref{eq:paraprod:2345} is the simplest to estimate, since for any $\theta_i \in [0,\alpha +\beta]$ such that $\sum_{i=1}^3 \theta_i = \alpha+\beta$
\[
\begin{split}
 &\sum_{j \in \Z} \sum_{\ell \aeq k \aeq  j} \int_{\R^d} \Ds{\theta_1}\Delta_{j} f\, \Ds{\theta_2}\Delta_{\ell}g\, \Ds{\theta_3}\Delta_k \psi\\
 \aeq&\sum_{j \in \Z} \sum_{\ell \aeq k \aeq  j} \int_{\R^d} 2^{(\gamma-\theta_1)j}\Ds{\theta_1}\Delta_{j} f\ 2^{(\alpha+\beta-\gamma-\theta_2)\ell}\Ds{\theta_2}\Delta_{\ell}g\ 2^{-\theta_3 k}\Ds{\theta_3}\Delta_k \psi\\
 \aleq& \norm{\brac{\sum_{j} |2^{(\gamma-\theta_1)j}\Ds{\theta_1}\Delta_{j} f|^{4}}^{\frac{1}{4}}}_{L^{p_1}(\R^d)}\, \norm{\brac{\sum_{\ell \in \Z} \brac{2^{(\alpha+\beta-\gamma-\theta_2)\ell} \Ds{\theta_2}\Delta_{\ell}g}^4}^{\frac{1}{4}}}_{L^{p_2}(\R^d)}\\
 & \quad \cdot \norm{\brac{\sum_{k\in \Z} \brac{2^{-\theta_3 k}\Ds{\theta_3}\Delta_k \psi}^2}^{\frac{1}{2}}}_{L^{p'}(\R^d)}\\
 \aleq&[f]_{\dot{F}^{\gamma}_{p_1,4}(\R^d)}\, [g]_{\dot{F}^{\alpha+\beta-\gamma}_{p_2,4}(\R^d)}\, [\psi]_{\dot{F}^0_{p',2}(\R^d)}\\
 \aleq&[f]_{\dot{F}^{\gamma}_{p_1,2}(\R^d)}\, [g]_{\dot{F}^{\alpha+\beta-\gamma}_{p_2,2}(\R^d)}\, [\psi]_{\dot{F}^0_{p',2}(\R^d)}\\
 \aeq&\|\Ds{\gamma} f\|_{L^{p_1}(\R^d)}\, \|\Ds{\alpha+\beta-\gamma} g\|_{L^{p_3}(\R^d)}.
 \end{split}
\]
The other terms are very similar to each other. We only discuss the first term.  By Plancherel theorem we have 
\[
\begin{split}
&\int_{\R^d} \tilde{H}_{\Ds{\beta},\Ds{\alpha}}(\Delta_j f,\Delta^{j-4}g)\, \Delta_k \psi\\
=&c\int_{\R^d}\int_{\R^d} k(\xi,\eta)\, \widehat{\Delta_j f}(\eta)\, \widehat{\Delta^{j-4}g}(\xi-\eta)\, \widehat{\Delta_k \psi}(\xi)\\
\end{split}
\]
Here $k$ is the symbol of the operator $\tilde{H}_{\Ds{\beta},\Ds{\alpha}}$ given by
 \[
 \begin{split}
  k(\xi,\eta) :=&|\xi|^\beta \brac{|\xi|^\alpha - |\eta|^\alpha - |\xi-\eta|^\alpha}\\
  &- |\eta|^\beta \brac{|\xi|^\alpha - |\eta|^\alpha - |\xi-\eta|^\alpha}\\
  &- |\xi-\eta|^\beta\brac{|\xi|^\alpha - |\eta|^\alpha - |\xi-\eta|^\alpha}\\
  =&\brac{|\xi|^\beta - |\eta|^\beta - |\xi-\eta|^\beta}\, \brac{|\xi|^\alpha - |\eta|^\alpha - |\xi-\eta|^\alpha}\\
  \end{split}
\]
We observe that by the support of the Littlewood-Paley projection operators $\Delta_j$ and $\Delta^{j-4}$, 
in the integral above we have $|\xi-\eta| \leq \frac{1}{2} |\eta|$. By a Taylor expansion,
\[
\begin{split}
  k(\xi,\eta) = |\xi-\eta|^{\alpha+\beta} \brac{1+ \sum_{\ell=1}^\infty \frac{1}{\ell!} m_\ell(\eta) n_\ell(\xi-\eta)\, |\xi-\eta|^{\ell} |\eta|^{-\ell} }
  \end{split}
\]
where $m_\ell(\eta)$ and $n_\ell(\eta)$ are zero homogeneous functions. Now we observe that for any $\theta \geq 0$
\[
\begin{split}
 &\sum_{j \in \Z} \sum_{k \aeq j}\int_{\R^d}\int_{\R^d} |\xi-\eta|^{\alpha + \beta + \theta}\, |\eta|^{-\theta}\, \widehat{\Delta_j f}(\eta)\, \widehat{\Delta^{j-4}g}(\xi-\eta)\, \widehat{\Delta_k \psi}(\xi)\\
 =&c\, \sum_{j \in \Z} \sum_{k \aeq j}\int_{\R^d} \lapms{\theta} \Delta_j f\ \Ds{\alpha+\beta+\theta} \Delta^{j-4} g\, \Delta_k \psi\\
 \aleq&\norm{ \brac{\sum_{j \in \Z} \brac{2^{j(\gamma +\theta)}\lapms{\theta} \Delta_j f}^2}^{\frac{1}{2}} }_{L^{p_1}(\R^d)} \ \norm{\sup_{j} 2^{-j(\gamma+\theta)}\Ds{\alpha+\beta+\theta} \Delta^{j-4} g}_{L^{p_2}(\R^d)}\, \norm{\brac{\sum_{k \in \Z} \brac{\Delta_k \psi}^2}^{\frac{1}{2}}}_{L^{p'}(\R^d)}\\
 \aeq&[f]_{\dot{F}^{\gamma}_{p_1,2}(\R^d)} \ \norm{\sup_{j} 2^{-j(\gamma+\theta)}\Ds{\alpha+\beta+\theta} \Delta^{j-4} g}_{L^{p_2}(\R^d)}\, \|\psi\|_{L^{p'}(\R^d)}.
 \end{split}
\]
In the last inequality we used the Littlewood-Paley theorem, \cite[Theorem 1.3.8.]{GrafakosModern}.
Since $\gamma + \theta > 0$
\[
\begin{split}
 &2^{-j(\gamma+\theta)}\Ds{\alpha+\beta+\theta} \Delta^{j-4} g \\
 \leq &\sum_{\ell \leq j-4} 2^{-\ell(\gamma+\theta)}\Ds{\alpha+\beta+\theta} \Delta_{\ell} g\, 2^{(\ell-j)(\gamma+\theta)} \\
 \aleq&\brac{\sum_{\ell \leq j-4} \brac{2^{-\ell(\gamma+\theta)} \Ds{\alpha+\beta+\theta} \Delta_{\ell} g}^2}^{\frac{1}{2}} \\
 \end{split}
\]
and thus we actually have
\[
\begin{split}
 &\sum_{j \in \Z} \sum_{k \aeq j}\int_{\R^d}\int_{\R^d} |\xi-\eta|^{\alpha + \beta + \theta}\, |\eta|^{-\theta}\, \widehat{\Delta_j f}(\eta)\, \widehat{\Delta^{j-4}g}(\xi-\eta)\, \widehat{\Delta_k \psi}(\xi)\\
 \aleq&\norm{f}_{\dot{F}^{\gamma}_{p_1,2}(\R^d)} \ \norm{g}_{\dot{F}^{\alpha+\beta-\gamma}_{p_2,2}(\R^d)}\, \norm{\psi}_{\dot{F}^{0}_{p',2}(\R^d)}\\
 \aleq&\|\Ds{\gamma} f\|_{L^{p_1}(\R^d)}\, \|\Ds{\alpha+\beta-\gamma} g\|_{L^{p_2}(\R^d)}.
 \end{split}
\]
With this method we can estimate each of the terms in \eqref{eq:paraprod:2345} and obtain the claim.

\end{proof}

The next result is very similar to the estimate of \Cref{la:commie2} (which indeed can be proven with the techniques of the following lemma).
\begin{lemma}\label{la:commies3toy} 
Assume $s \in (0,1]$ and $\alpha_1,\alpha_2 \in (0,s)$ such that $\sum_{i=1}^2 \alpha_i = s$. Let $p \in (1,\infty)$. Assume for $p_1,p_2 \in [2,\infty)$ such that 
\begin{equation} \label{eq:commies3toy:pq} 
\frac{1}{p_1}+\frac{1}{p_2}  = \frac{1}{p}. 
\end{equation}
Then we have
\[
\begin{split}
 &\brac{\int_{\R^d} \brac{\int_{\R^d} \frac{\abs{f(x)-f(y)}\, \abs{g(x)-g(y)}} {|x-y|^{d+s}}dy}^p dx}^{\frac{1}{p}}\\
 \aleq &\|\Ds{\alpha_1} f\|_{L^{p_1}(\R^d)}\, \|\Ds{\alpha_2} g\|_{L^{p_2}(\R^d)}\,\\
 \end{split}
\]
\end{lemma}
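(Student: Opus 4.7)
The plan is to split the kernel symmetrically according to the split $s=\alpha_1+\alpha_2$, apply Cauchy--Schwarz pointwise in $y$, and then recognize the two resulting square functions as Strichartz-type characterizations of the fractional Sobolev norms $\|\Ds{\alpha_i}\cdot\|_{L^{p_i}}$.

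Since $d+s = \brac{\tfrac{d}{2}+\alpha_1} + \brac{\tfrac{d}{2}+\alpha_2}$, write
\[
\frac{|f(x)-f(y)||g(x)-g(y)|}{|x-y|^{d+s}} = \frac{|f(x)-f(y)|}{|x-y|^{\frac{d}{2}+\alpha_1}}\cdot \frac{|g(x)-g(y)|}{|x-y|^{\frac{d}{2}+\alpha_2}}.
\]
Then Cauchy--Schwarz in $y$ gives
\[
\int_{\R^d}\frac{|f(x)-f(y)||g(x)-g(y)|}{|x-y|^{d+s}}\,dy \leq S_{\alpha_1}f(x)\,S_{\alpha_2}g(x),
\]
where, for $\alpha \in (0,1)$, we set
\[
S_\alpha h(x) := \brac{\int_{\R^d}\frac{|h(x)-h(y)|^2}{|x-y|^{d+2\alpha}}\,dy}^{\frac{1}{2}}.
\]

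Next I would apply H\"older's inequality in $x$ using \eqref{eq:commies3toy:pq}, obtaining
\[
\norm{S_{\alpha_1}f\cdot S_{\alpha_2}g}_{L^p(\R^d)} \leq \|S_{\alpha_1}f\|_{L^{p_1}(\R^d)}\, \|S_{\alpha_2}g\|_{L^{p_2}(\R^d)}.
\]
Finally, I would invoke the Strichartz square-function characterization of the fractional Sobolev norms: for any $\alpha \in (0,1)$ and $q \in [2,\infty)$,
\[
\|S_\alpha h\|_{L^q(\R^d)} \aleq \|\Ds{\alpha}h\|_{L^q(\R^d)}.
\]
Since $\alpha_1,\alpha_2 \in (0,s) \subset (0,1)$ (because $s \leq 1$) and $p_1,p_2 \in [2,\infty)$ by hypothesis, this applies to each factor and yields the claim.

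The main technical input is thus the Strichartz-type bound $\|S_\alpha h\|_{L^q} \aleq \|\Ds{\alpha} h\|_{L^q}$ for $q \geq 2$, which is the reason for the restriction $p_1,p_2 \geq 2$ (Cauchy--Schwarz forces a square inside the integral, and the characterization of $\dot{W}^{\alpha,q}$ via this quadratic Gagliardo-type seminorm is known to hold only for $q\geq 2$, the case $q=2$ being the classical Aronszajn--Slobodeckij identity). This is the only nontrivial ingredient; everything else is bookkeeping of exponents.
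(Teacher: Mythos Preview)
Your proof is correct and follows exactly the paper's approach: Cauchy--Schwarz in $y$, then H\"older in $x$, then the identification $\|S_{\alpha_i}h\|_{L^{p_i}} \aeq [h]_{\dot{F}^{\alpha_i}_{p_i,2}} \aeq \|\Ds{\alpha_i}h\|_{L^{p_i}}$ for $p_i\geq 2$. The only cosmetic difference is that the paper phrases the last step via the Triebel--Lizorkin identification from \cite{P17} rather than calling it the Strichartz square-function characterization.
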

\begin{proof}
From H\"older's inequality
\[
\begin{split}
 &\brac{\int_{\R^d} \brac{\int_{\R^d} \frac{\abs{f(x)-f(y)}\, \abs{g(x)-g(y)}} {|x-y|^{d+s}}dy}^p dx}^{\frac{1}{p}}\\
 \aleq &\brac{\int_{\R^d} \brac{\int_{\R^d} \frac{\abs{f(x)-f(y)}^{2}}{|x-y|^{d+2\alpha_1 }} dy}^{p_1} dx}^{\frac{1}{p_1}}\
  \brac{\int_{\R^d} \brac{\int_{\R^d} \frac{\abs{g(x)-g(y)}^{2}}{|x-y|^{d+2\alpha_2}} dy}^{p_2} dx}^{\frac{1}{p_2}}\\
 \end{split}
\]
Since $p_i \geq 2$ we have by the results in \cite{P17}, denoting by $\dot{F}$ the homogeneous Triebel-Lizorkin space, cf. \cite{RS96}, and Littlewood-Paley theorem, \cite[Theorem 1.3.8.]{GrafakosModern},
\[
 \brac{\int_{\R^d} \brac{\int_{\R^d} \frac{\abs{h(x)-h(y)}^{2}}{|x-y|^{d+2\alpha_i }} dy}^{p_i} dx}^{\frac{1}{p_1}} \aeq [h]_{\dot{F}^{\alpha_i,p_i}_{2}(\R^d)} \aeq \|\Ds{\alpha_i} h\|_{L^{p_i}(\R^d)}.
\]
We can conclude.
\end{proof}

We will need Leibniz-rule estimates involving three terms, the basis of which is the following Lemma.
\begin{lemma}\label{la:commies3} 
Assume $\alpha_i \in (0,1)$ such that $\sum_{i=1}^3 \alpha_i = 1$ and let $p \in (1,\infty)$. Assume for $p_i \in (p,\infty)$ such that 
\[\frac{1}{p_1}+\frac{1}{p_2} + \frac{1}{p_3} = \frac{1}{p} ,\]
and 
\begin{equation}\label{eq:c3:23525}
 \frac{1}{p_i} - \frac{\alpha_i}{d} < \frac{1}{2}, \quad i=1,2,3.
\end{equation}
(Observe the previous assumptions are trivially satisfied if $p_i \geq 2$).

Then we have 
\[
\begin{split}
 &\brac{\int_{\R^d} \brac{\int_{\R^d} \frac{\abs{f(x)-f(y)}\, \abs{g(x)-g(y)} \abs{h(x)-h(y)}} {|x-y|^{d+1}}dy}^p dx}^{\frac{1}{p}}\\
 \aleq &\|\Ds{\alpha_1} f\|_{L^{p_1}(\R^d)}\, \|\Ds{\alpha_2} g\|_{L^{p_2}(\R^d)}\, \|\Ds{\alpha_3} h\|_{L^{p_3}(\R^d)}.
 \end{split}
\]
\end{lemma}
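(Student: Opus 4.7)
My plan is to extend the two-function Hölder + Triebel-integral strategy used in the proof of \Cref{la:commies3toy} to three factors, via a carefully tuned triple Hölder split, followed by the Prats--Strichartz characterization of Triebel--Lizorkin spaces via Besov-type integrals.

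\textbf{Step 1 (choice of Hölder exponents).} For each $i \in \{1,2,3\}$ I set $\bar q_i := (\tfrac{1}{p_i} - \tfrac{\alpha_i}{d})^{-1}$; the hypothesis \eqref{eq:c3:23525} is precisely $\bar q_i > 2$. One computes
\[
\sum_{i=1}^{3} \frac{1}{\bar q_i} = \sum_{i=1}^3 \brac{\frac{1}{p_i} - \frac{\alpha_i}{d}} = \frac{1}{p} - \frac{1}{d} < 1,
\]
whereas $\sum_{i=1}^3 1/2 = 3/2 > 1$. By continuity I can therefore pick $q_i \in [2, \bar q_i)$ for $i=1,2,3$ with $\sum_i 1/q_i = 1$.

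\textbf{Step 2 (inner Hölder).} Since $\sum_i \alpha_i = 1$, one has $\sum_i (d + q_i \alpha_i)/q_i = d+1$, so the kernel factors as $|x-y|^{-(d+1)} = \prod_i |x-y|^{-(d+q_i\alpha_i)/q_i}$. Setting $u^{(1)} := f$, $u^{(2)} := g$, $u^{(3)} := h$, Hölder's inequality in $y$ with exponents $(q_1,q_2,q_3)$ yields
\[
\int_{\R^d} \frac{|f(x)-f(y)|\,|g(x)-g(y)|\,|h(x)-h(y)|}{|x-y|^{d+1}}\, dy \leq \prod_{i=1}^3 \brac{\int_{\R^d} \frac{|u^{(i)}(x)-u^{(i)}(y)|^{q_i}}{|x-y|^{d+q_i\alpha_i}}\, dy}^{1/q_i}.
\]

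\textbf{Step 3 (outer Hölder).} Taking $L^p_x$-norms and applying Hölder in $x$ with exponents $(p_1,p_2,p_3)$ (since $\sum 1/p_i = 1/p$):
\[
\norm{\int_{\R^d} \frac{|f(x)-f(y)|\,|g(x)-g(y)|\,|h(x)-h(y)|}{|x-y|^{d+1}}\, dy}_{L^p_x} \leq \prod_{i=1}^3 \norm{\brac{\int_{\R^d} \frac{|u^{(i)}(x)-u^{(i)}(y)|^{q_i}}{|x-y|^{d+q_i\alpha_i}}\, dy}^{1/q_i}}_{L^{p_i}_x}.
\]

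\textbf{Step 4 (Triebel--Lizorkin identification).} Each factor is a Besov-type integral. Since $q_i \geq 2$, the inclusion $\ell^2 \subset \ell^{q_i}$ gives the embedding $\dot F^{\alpha_i, p_i}_2 \hookrightarrow \dot F^{\alpha_i, p_i}_{q_i}$. Combining this with a Prats-type Besov-integral characterization of the Triebel--Lizorkin seminorm \cite{P17}, valid because $q_i < \bar q_i$, together with the Littlewood--Paley identification $\dot F^{\alpha_i, p_i}_2 \aeq H^{\alpha_i, p_i}$, one obtains
\[
\norm{\brac{\int_{\R^d} \frac{|u^{(i)}(x)-u^{(i)}(y)|^{q_i}}{|x-y|^{d+q_i\alpha_i}}\, dy}^{1/q_i}}_{L^{p_i}_x} \aeq [u^{(i)}]_{\dot F^{\alpha_i, p_i}_{q_i}} \aleq [u^{(i)}]_{\dot F^{\alpha_i, p_i}_2} \aeq \|\Ds{\alpha_i} u^{(i)}\|_{L^{p_i}}.
\]
Multiplying the three factor estimates gives the claim.

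\textbf{Main obstacle.} The delicate point is finding a triple $(q_1,q_2,q_3)$ satisfying simultaneously $\sum 1/q_i = 1$ and $q_i \in [2, \bar q_i)$ for each $i$: too small a $q_i$ would break the embedding $\dot F^{\alpha_i,p_i}_2 \hookrightarrow \dot F^{\alpha_i,p_i}_{q_i}$, while too large a $q_i$ would break the Prats characterization. Assumption \eqref{eq:c3:23525} (combined with the cheap bound $\sum 1/\bar q_i = 1/p - 1/d < 1$) is exactly calibrated to give this balancing act enough slack.
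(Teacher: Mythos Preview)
Your proof is correct and essentially identical to the paper's: the same choice of auxiliary exponents $q_i \in [2,\bar q_i)$ with $\sum_i 1/q_i = 1$, the same double H\"older split (inner in $y$, outer in $x$), and the same Triebel--Lizorkin identification via \cite{P17} together with the embedding $\dot F^{\alpha_i,p_i}_2 \hookrightarrow \dot F^{\alpha_i,p_i}_{q_i}$ for $q_i \geq 2$. The only cosmetic difference is that the paper explicitly handles the case $p_i \geq d/\alpha_i$ by taking $q_i \in [2,\infty)$, whereas your formula $\bar q_i = (1/p_i - \alpha_i/d)^{-1}$ tacitly requires $1/p_i - \alpha_i/d > 0$; reading $\bar q_i = \infty$ in the complementary case makes your continuity argument go through unchanged.
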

\begin{proof}
Since 
\[\sum_{i=1}^3 \frac{d-\alpha_i p_i}{dp_i} < \frac{1}{p_1} + \frac{1}{p_2} + \frac{1}{p_3} = \frac{1}{p} < 1\] 
in view of \eqref{eq:c3:23525} we can find $q_i \in [2,\frac{dp_i}{d-\alpha_i p_i})$ (if $p_i > \frac{d}{\alpha_i}$ we pick $q_i \in [2,\infty)$) such that 
\[
 \frac{1}{q_1} + \frac{1}{q_2} + \frac{1}{q_3} = 1.
\]
Then
\[
\begin{split}
&\int_{\R^d} \frac{\abs{f(x)-f(y)}\, \abs{g(x)-g(y)} \abs{h(x)-h(y)}} {|x-y|^{d+1}}dy\\
\aleq& \brac{\int_{\R^d} \frac{\abs{f(x)-f(y)}^{q_1}}{|x-y|^{d+\alpha_1 q_1}} dy}^{\frac{1}{q_1}}\, \brac{\int_{\R^d} \frac{\abs{g(x)-g(y)}^{q_2}}{|x-y|^{d+\alpha_2 q_2}} dy}^{\frac{1}{q_2}}\, \brac{\int_{\R^d} \frac{\abs{h(x)-h(y)}^{q_3}}{|x-y|^{d+\alpha_3 q_3}} dy}^{\frac{1}{q_3}}
\end{split}
 \]
From another application of H\"older's inequality,
\[
\begin{split}
 &\brac{\int_{\R^d} \brac{\int_{\R^d} \frac{\abs{f(x)-f(y)}\, \abs{g(x)-g(y)} \abs{h(x)-h(y)}} {|x-y|^{d+1}}dy}^p dx}^{\frac{1}{p}}\\
 \aleq&[f]_{W^{\alpha_1,p_1}_{q_1}(\R^d)}\, [g]_{W^{\alpha_2,p_2}_{q_2}(\R^d)}\, [h]_{W^{\alpha_3,p_3}_{q_3}(\R^d)}.
 \end{split}
\]
Here the $W^{\alpha,p}_{q}$-seminorm for $\alpha \in (0,1)$ and $p,q \in (1,\infty)$ is defined as
\[
 [f]_{W^{\alpha,p}_q(\R^d)} = \brac{\int_{\R^d} \brac{\int_{\R^d} \frac{|f(x)-f(y)|^q}{|x-y|^{d+sq}} dy}^{\frac{p}{q}}\, dx}^{\frac{1}{p}}.
\]
Our choice for $q_i$ ensures $p_i >\frac{dq_i}{d+\alpha_i q_i}$, so we have by the results in \cite{P17},
\[
 [f]_{W^{\alpha_i,p_i}_{q_i}(\R^d)} \aeq [f]_{\dot{F}^{\alpha_i,p_i}_{q_i}(\R^d)},
\]
where $\dot{F}$ denotes the homogeneous Triebel-Lizorkin space. That is, we have 
\[
\begin{split}
 &\brac{\int_{\R^d} \brac{\int_{\R^d} \frac{\abs{f(x)-f(y)}\, \abs{g(x)-g(y)} \abs{h(y)-h(y)}} {|x-y|^{d+1}}dy}^p dx}^{\frac{1}{p}}\\
 \aleq&[f]_{\dot{F}^{\alpha_1,p_1}_{q_1}(\R^d)}\, [g]_{\dot{F}^{\alpha_2,p_2}_{q_2}(\R^d)}\, [h]_{\dot{F}^{\alpha_3,p_3}_{q_3}(\R^d)}.
 \end{split}
\]
Since $q_i \geq 2$ we have, cf. \cite{RS96},
\[
 [f]_{\dot{F}^{\alpha_i,p_i}_{q_i}(\R^d)} \aleq [f]_{\dot{F}^{\alpha_i,p_i}_{2}(\R^d)} \aeq \|\Ds{\alpha_i} f\|_{L^{p_i}(\R^d)}.
\]
Thus, we have established the claim and can conclude.
\end{proof}

We now state a version similar to \Cref{la:commies3} but for $\alpha_3 < 0$.
\begin{lemma}\label{la:commies3b} 
Assume $\alpha_i \in (0,1)$, $i=1,2$ such that $\alpha_1 + \alpha_2 > 1$. If $d=1$ assume moreover that $\alpha_1+\alpha_2-1 < \frac{d}{2}$. Assume for $p_i \in (2,\infty)$ such that $\frac{1}{p_1}+\frac{1}{p_2} + \frac{1}{p_3} = \frac{1}{2}$ and $\frac{dp_3}{ d-(1-\alpha_1-\alpha_2) p_3} \in (1,\infty)$. Then we have 
\[
\begin{split}
 &\brac{\int_{\R^d} \brac{\int_{\R^d} \frac{\abs{f(x)-f(y)}\, \abs{g(x)-g(y)} \abs{h(y)}} {|x-y|^{d+1}}dy}^2 dx}^{\frac{1}{2}}\\
 \aleq &\|\Ds{\alpha_1} f\|_{L^{p_1}(\R^d)}\, \|\Ds{\alpha_2} g\|_{L^{p_2}(\R^d)}\, \|h\|_{L^{\frac{dp_3}{d+(\alpha_1+\alpha_2-1) p_3}}(\R^d)}.
 \end{split}
\]
\end{lemma}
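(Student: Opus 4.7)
The plan is to adapt the argument of \Cref{la:commies3}, with the key difference that the ``missing'' difference on $h$ is absorbed into a Riesz potential. Concretely, I would split the kernel as
\[
\frac{1}{|x-y|^{d+1}} = \frac{1}{|x-y|^{d/q_1 + \alpha_1}}\cdot \frac{1}{|x-y|^{d/q_2 + \alpha_2}}\cdot \frac{1}{|x-y|^{d/q_3 - (\alpha_1+\alpha_2 - 1)}}
\]
for suitable exponents $q_i \geq 2$ with $\sum_i \frac{1}{q_i} = 1$, and apply H\"older's inequality in $y$ with exponents $(q_1,q_2,q_3)$. The first two factors produce $W^{\alpha_i, p_i}_{q_i}$-type expressions, while the third, provided $q_3(\alpha_1+\alpha_2-1) \in (0,d)$, is a constant multiple of $\brac{\lapms{q_3(\alpha_1+\alpha_2-1)}(|h|^{q_3})(x)}^{1/q_3}$.

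I would then apply H\"older in $x$ with exponents $(p_1,p_2,p_3)$, valid since $\sum 1/p_i = 1/2$. Exactly as in \Cref{la:commies3}, the Prats equivalence \cite{P17} together with the Triebel-Lizorkin embedding $\dot{F}^{\alpha_i, p_i}_{q_i} \hookrightarrow \dot{F}^{\alpha_i, p_i}_{2}$ (valid for $q_i \geq 2$) and the Littlewood-Paley theorem control the first two factors by $\|\Ds{\alpha_1} f\|_{L^{p_1}}$ and $\|\Ds{\alpha_2} g\|_{L^{p_2}}$. For the third factor, the Sobolev embedding of \Cref{la:sobolev} applied to the Riesz potential $\lapms{s}$ with $s := q_3(\alpha_1+\alpha_2 - 1)$ yields
\[
\norm{\lapms{s}(|h|^{q_3})}_{L^{p_3/q_3}(\R^d)}^{1/q_3} \aleq \|h\|_{L^{q_3 r}(\R^d)}, \qquad \frac{1}{r} = \frac{q_3}{p_3} + \frac{s}{d},
\]
and a direct computation gives $q_3 r = \frac{dp_3}{d+(\alpha_1+\alpha_2-1)p_3}$, matching the target Lebesgue exponent in the statement.

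What remains is to exhibit admissible $q_i$. Taking $q_1 = q_2$ and enforcing $\sum 1/q_i = 1$ gives $q_3 = q_1/(q_1-2)$, and the crucial constraint $q_3 < d/(\alpha_1+\alpha_2-1)$ becomes $q_1 > 2d/(d+1-\alpha_1-\alpha_2)$; since $\alpha_1+\alpha_2 > 1$ this threshold is strictly above $2$. For $d \geq 2$ such a choice is always possible, and for $d=1$ the additional hypothesis $\alpha_1+\alpha_2 - 1 < d/2$ is precisely what ensures the threshold is finite. The Prats condition $p_i > dq_i/(d+\alpha_i q_i)$ is then automatic since $p_i > 2$ and $q_i$ can be chosen close to the threshold. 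The main obstacle I expect is precisely this parameter bookkeeping: guaranteeing simultaneously $q_3(\alpha_1+\alpha_2-1) \in (0,d)$ (for the Riesz potential representation), $q_i \geq 2$ (for the Triebel embedding on the first two factors), and $p_3/q_3 > 1$ (for the Sobolev embedding) is exactly what dictates the dimensional assumption in $d=1$ and the explicit hypothesis $\frac{dp_3}{d+(\alpha_1+\alpha_2-1)p_3} \in (1,\infty)$.
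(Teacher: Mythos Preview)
Your overall strategy---split the kernel, apply H\"older in $y$ with exponents $(q_1,q_2,q_3)$, identify the first two factors as $W^{\alpha_i,p_i}_{q_i}$-norms via \cite{P17} and Triebel embeddings, and treat the third as a Riesz potential controlled by Sobolev---is exactly what the paper does. The difference is in the choice of $(q_1,q_2,q_3)$.

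The paper simply takes $q_1=p_1$, $q_2=p_2$, which forces $q_3=\tfrac{2p_3}{p_3+2}$. This choice makes the side conditions transparent: $q_1,q_2>2$ is immediate from $p_1,p_2>2$; the Prats condition $p_i>\tfrac{dq_i}{d+\alpha_i q_i}$ becomes $d+\alpha_i p_i>d$, trivially true; and the Sobolev step $\|\lapms{\alpha_3 q_3}(|h|^{q_3})\|_{L^{p_3/q_3}}\aleq\||h|^{q_3}\|_{L^r}$ requires $r>1$, which with this $q_3$ reduces \emph{exactly} to $\alpha_3<d/2$. This is automatic for $d\ge 2$ (since $\alpha_3<1$) and is precisely the extra hypothesis imposed when $d=1$.

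Your choice $q_1=q_2$ introduces an unnecessary tension: making $q_1$ large (to push $q_3\to 1$, needed for the Riesz/Sobolev constraints) can conflict with the Prats bound $\tfrac{1}{q_1}>\tfrac{1}{p_i}-\tfrac{\alpha_i}{d}$ when $p_i<d/\alpha_i$, and your claim that this is ``automatic since $p_i>2$'' is not justified. More concretely, your explanation of the $d=1$ hypothesis is off: the threshold $\tfrac{2d}{d+1-\alpha_1-\alpha_2}$ is finite whenever $\alpha_1+\alpha_2<d+1$, which is always true, so that is not where $\alpha_3<d/2$ enters. Switching to the paper's choice $q_i=p_i$ $(i=1,2)$ removes all of this bookkeeping and makes the role of the dimensional hypothesis clear.
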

\begin{proof}
As in \Cref{la:commies3}, since $p_i > 2$ for $i=1,2$ we can choose $q_1 = p_1$ and $q_2 = p_2$ and set
\[
 q_3 = \frac{2p_3}{p_3+2} \in (p_3,\infty).
\]
Set $-\alpha_3 := 1 - \alpha_1 - \alpha_2 < 0$. Then $\alpha_3 \in (0,1)$. By the same argument as in the proof of \Cref{la:commies3},
 \[
\begin{split}
&\int_{\R^d} \frac{\abs{f(x)-f(y)}\, \abs{g(x)-g(y)} \abs{h(y)}} {|x-y|^{d+1}}dy\\
\aleq& \brac{\int_{\R^d} \frac{\abs{f(x)-f(y)}^{q_1}}{|x-y|^{d+\alpha_1 q_1}} dy}^{\frac{1}{q_1}}\, \brac{\int_{\R^d} \frac{\abs{g(x)-g(y)}^{q_2}}{|x-y|^{d+\alpha_2 q_2}} dy}^{\frac{1}{q_2}}\, \brac{\int_{\R^d} \frac{\abs{h(y)}^{q_3}}{|x-y|^{d-\alpha_3 q_3}} dy}^{\frac{1}{q_3}}\\
\aeq& \brac{\int_{\R^d} \frac{\abs{f(x)-f(y)}^{q_1}}{|x-y|^{d+\alpha_1 q_1}} dy}^{\frac{1}{q_1}}\, \brac{\int_{\R^d} \frac{\abs{g(x)-g(y)}^{q_2}}{|x-y|^{d+\alpha_2 q_2}} dy}^{\frac{1}{q_2}}\, \brac{ \lapms{\alpha_3 q_3} \brac{\abs{h}^{q_3}} (x)}^{\frac{1}{q_3}}.
\end{split}
 \]
So, with the same H\"older inequality and embedding theorems as in \Cref{la:commies3},
\[
\begin{split}
 &\brac{\int_{\R^d} \brac{\int_{\R^d} \frac{\abs{f(x)-f(y)}\, \abs{g(x)-g(y)} \abs{h(y)-h(y)}} {|x-y|^{d+1}}dy}^2 dx}^{\frac{1}{2}}\\
 \aleq&\|\Ds{\alpha_1} f\|_{L^{p_1}(\R^d)}\, \|\Ds{\alpha_2} g\|_{L^{p_1}(\R^d)}\,   \|\lapms{\alpha_3 q_3}\brac{\abs{h}^{q_3}}^{\frac{1}{q_3}}\|_{L^{p_3}(\R^d)}.
\end{split}
\]
We now observe that by Sobolev inequality, \Cref{la:sobolev},
\[
\begin{split}
 \|\lapms{\alpha_3 q_3}\brac{\abs{h}^{q_3}}^{\frac{1}{q_3}}\|_{L^{p_3}(\R^d)} =& \|\lapms{\alpha_3 q_3}\brac{\abs{h}^{q_3}}\|_{L^{\frac{p_3}{q_3}}(\R^d)}^{\frac{1}{q_3}}\\
 \aleq& \|\abs{h}^{q_3}\|_{L^{\frac{d p_3}{q_3 d+\alpha_3 p_3q_3}}(\R^d)}^{\frac{1}{q_3}}\\
 =&\|h\|_{L^{\frac{dp_3}{ d+\alpha_3 p_3}}(\R^d)}.
 \end{split}
\]
The above is correct as long as $\frac{d p_3}{q_3 d+\alpha_3 p_3q_3} \in (1,\infty)$. 
We see that if $\alpha_3 < \frac{d}{2}$ this is satisfied and can conclude.
\end{proof}

\subsection{Specific estimates}
In this section we record estimates for specific $L^p$-spaces of interest. These are mostly consequences from the estimates above, and will be useful throughout the next section. 

\begin{lemma}
Let $d \geq 2$ then
  \begin{equation}\label{eq:comm:4}
 \|H_{\Dso{}} (f,g)\|_{L^{\frac{2d}{d-1}}(\R^d)} \aleq \|\Dso{} f\|_{L^{2}(\R^d)}\, \|\Dso{} g\|_{L^{2d}(\R^d)}.
 \end{equation}
\begin{equation}\label{eq:comm:1}
  \|\Dso{} H_{\Dso{}} (f,g)\|_{L^{d}(\R^d)} \aleq \|\Dso{} f\|_{L^{2d}(\R^d)}\, \|\Dso{} g\|_{L^{2d}(\R^d)}
 \end{equation}
\begin{equation}\label{eq:comm:3}
  \|H_{\Dso{}} (f,g)\|_{L^{\infty}(\R^d)} \aleq \|\Dso{} f\|_{L^{2d,2}(\R^d)}\, \|\Dso{} g\|_{L^{2d,2}(\R^d)}
 \end{equation}
\end{lemma}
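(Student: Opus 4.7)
For \eqref{eq:comm:4}, I would apply the Leibniz-type bound \eqref{eq:comm:5} with $\alpha = 1$ and a splitting parameter $\sigma \in (0, \tfrac12)$ to obtain
\[
\|H_{\Dso}(f, g)\|_{L^{\frac{2d}{d-1}}(\R^d)} \aleq \|\Ds{\sigma} f\|_{L^{p_1}(\R^d)} \|\Ds{1-\sigma} g\|_{L^{p_2}(\R^d)}.
\]
Writing $\Ds{\sigma} f = \lapms{1-\sigma}\Dso f$ and $\Ds{1-\sigma} g = \lapms{\sigma}\Dso g$, the Sobolev inequality (\Cref{la:sobolev}) gives $\|\Ds{\sigma} f\|_{L^{p_1}} \aleq \|\Dso f\|_{L^2}$ for $p_1 = \tfrac{2d}{d-2+2\sigma}$ and $\|\Ds{1-\sigma} g\|_{L^{p_2}} \aleq \|\Dso g\|_{L^{2d}}$ for $p_2 = \tfrac{2d}{1-2\sigma}$. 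The arithmetic check $\tfrac{1}{p_1}+\tfrac{1}{p_2} = \tfrac{d-1}{2d}$ confirms compatibility with \eqref{eq:comm:5}, and any $\sigma \in (0, \tfrac12)$ keeps $p_1, p_2 \in (1, \infty)$.

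For \eqref{eq:comm:1}, the main step is an algebraic reduction using $\Dso^2 = -\Delta$. Expanding
\[
\tilde{H}_{\Dso, \Dso}(f, g) = \Dso H_{\Dso}(f, g) - H_{\Dso}(\Dso f, g) - H_{\Dso}(f, \Dso g)
\]
via $H_{\Dso}(u, v) = \Dso(uv) - u \Dso v - v \Dso u$, the $\Dso(f\Dso g)$- and $\Delta$-contributions combine so that only pointwise products remain, yielding the identity
\[
\Dso H_{\Dso}(f, g) = \tfrac{1}{2}\tilde{H}_{\Dso, \Dso}(f, g) - \nabla f \cdot \nabla g - (\Dso f)(\Dso g).
\]
The two product terms are bounded in $L^d$ by H\"older and \Cref{la:nablalaps} by $\|\Dso f\|_{L^{2d}}\|\Dso g\|_{L^{2d}}$, and $\tilde{H}_{\Dso, \Dso}(f,g)$ is controlled by \Cref{la:doublecommie} with $\alpha = \beta = \gamma = 1$ and $p_1 = p_2 = 2d$.

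For \eqref{eq:comm:3}, I would start from the endpoint Sobolev embedding \eqref{eq:limitsobLinfty} applied at level $\alpha = 1$:
\[
\|H_{\Dso}(f, g)\|_{L^\infty(\R^d)} = \|\lapms{1} \Dso H_{\Dso}(f, g)\|_{L^\infty(\R^d)} \aleq \|\Dso H_{\Dso}(f, g)\|_{L^{d, 1}(\R^d)}.
\]
Reusing the identity from the previous paragraph, the pointwise products $\nabla f \cdot \nabla g$ and $(\Dso f)(\Dso g)$ are handled in $L^{d, 1}$ by the Lorentz H\"older inequality (the indices match: $\tfrac{1}{d} = \tfrac{1}{2d}+\tfrac{1}{2d}$ and $1 = \tfrac{1}{2} + \tfrac{1}{2}$) and bounded by $\|\Dso f\|_{L^{2d, 2}}\|\Dso g\|_{L^{2d, 2}}$ as required.

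The hard part will be the remaining piece, a Lorentz upgrade of \Cref{la:doublecommie},
\[
\|\tilde{H}_{\Dso, \Dso}(f, g)\|_{L^{d, 1}(\R^d)} \aleq \|\Dso f\|_{L^{2d, 2}(\R^d)}\|\Dso g\|_{L^{2d, 2}(\R^d)}.
\]
I would obtain this by bilinear real interpolation: the Lebesgue estimates $\|\tilde{H}_{\Dso, \Dso}(f, g)\|_{L^{(2d \pm \epsilon)/2}} \aleq \|\Dso f\|_{L^{2d \pm \epsilon}}\|\Dso g\|_{L^{2d \pm \epsilon}}$ are both instances of \Cref{la:doublecommie}, and bilinear Marcinkiewicz interpolation at $\theta = \tfrac{1}{2}$ with secondary-index splitting $1 = \tfrac{1}{2}+\tfrac{1}{2}$ produces the required $L^{2d, 2} \times L^{2d, 2} \to L^{d, 1}$ mapping property. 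Alternatively, the paraproduct/Littlewood-Paley proof of \Cref{la:doublecommie} itself extends to Lorentz spaces, since the Fefferman-Stein vector-valued maximal inequality and the Triebel-Lizorkin characterization used there are known to have Lorentz analogues.
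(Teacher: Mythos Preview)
Your argument for \eqref{eq:comm:4} is correct and coincides with the paper's: the paper simply fixes your parameter at $\sigma=\tfrac13$.

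For \eqref{eq:comm:1} and \eqref{eq:comm:3} your route is genuinely different and in fact more self-contained than the paper's. The paper does not prove \eqref{eq:comm:1} directly; it cites the harmonic-extension argument of \cite[Theorem 8.2]{LS2018} (or the paraproduct estimates of \cite{Seps15}), then remarks that the same proof, or interpolation, yields the Lorentz upgrade $\|\Dso H_{\Dso}(f,g)\|_{L^{d,1}}\aleq \|\Dso f\|_{L^{2d,2}}\|\Dso g\|_{L^{2d,2}}$, from which \eqref{eq:comm:3} follows via \eqref{eq:limitsobLinfty}. You instead derive the clean algebraic identity
\[
\Dso H_{\Dso}(f,g)=\tfrac12\,\tilde H_{\Dso,\Dso}(f,g)-\nabla f\cdot\nabla g-(\Dso f)(\Dso g),
\]
which is correct (it follows immediately from $\Dso H_{\Dso}(f,g)+H_{\Dso}(\Dso f,g)+H_{\Dso}(f,\Dso g)=-2\nabla f\cdot\nabla g-2\Dso f\,\Dso g$, itself a two-line computation using $\Dso^2=-\Delta$ and the product rule for $\Delta$). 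This reduces \eqref{eq:comm:1} to \Cref{la:doublecommie} at the admissible endpoint $\alpha=\beta=\gamma=1$, so your proof of \eqref{eq:comm:1} stays entirely within the lemmas already stated in the paper---a pleasant bonus the paper's citation-based proof does not have. For \eqref{eq:comm:3} you then need the Lorentz version of \Cref{la:doublecommie}; your bilinear real-interpolation sketch is essentially right (Lions--Peetre bilinear interpolation gives $L^{p_\theta,a}\times L^{q_\theta,b}\to L^{r_\theta,c}$ with $1/c\le 1/a+1/b$, so $a=b=2$ yields $c=1$), though you should pick the two endpoint exponents so that the interpolated input space is exactly $L^{2d,2}$ rather than the naive $2d\pm\epsilon$. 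Your alternative of rerunning the paraproduct proof of \Cref{la:doublecommie} in Lorentz spaces is also viable and is morally what the paper means by ``from that argument''.
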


\begin{proof}[Proof of \eqref{eq:comm:4}]
From \eqref{eq:comm:5} we have 
\[
\|H_{\Dso{}} (f,g)\|_{L^{\frac{2d}{d-1}}(\R^d)} \aleq \||\Ds{1/3}f|\|_{L^{\frac{6d}{3d-4}}(\R^d)}\, \|\Ds{2/3} g\|_{L^{6d}(\R^d)}\\
\] 
Now the claim follows by Sobolev embedding, \Cref{la:sobolev}.
\end{proof}

\begin{proof}[Proof of \eqref{eq:comm:1} and \eqref{eq:comm:3}]
%

%
\eqref{eq:comm:1} can be proven almost without changes following the proof of \cite[Theorem 8.2.]{LS2018}, see also \cite[Theorem 3.5.2]{Ingmanns20}, where such an estimate was obtained for the $L^1$-case (even the Hardy-space). Alternatively, one could use paraproduct estimates as in \cite[Theorem 1.4.]{Seps15}.

Actually, from that argument (or by interpolation) we obtain an estimate in the realm of Lorentz spaces.
\[
  \|\Dso{} H_{\Dso{}} (f,g)\|_{L^{d,1}(\R^d)} \aleq \|\Dso{} f\|_{L^{2d,2}(\R^d)}\, \|\Dso{} g\|_{L^{2d,2}(\R^d)}.
 \]
The latter implies \eqref{eq:comm:3}, using the Sobolev embedding \Cref{la:sobolev}, \eqref{eq:limitsobLinfty}.
\end{proof}

For later use we also record the following easy consequence of the Leibniz rule estimate
\begin{lemma}\label{la:wugsd8gfus0df}
Assume $d\geq 3$, $\sigma \in [0,1]$. Then
\[
 \|\Ds{\sigma} \brac{f\, \Dso{}g}\|_{L^{\frac{2d}{d-1+2\sigma}}(\R^d)} \aleq \|\Dso f\|_{L^2(\R^d)}\, \|\Ds{1+\sigma} g\|_{L^{\frac{2d}{2(1+\sigma)-1}}(\R^d)}.
\]
\end{lemma}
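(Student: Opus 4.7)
The plan is to apply the fractional Leibniz rule to write
\[
 \Ds{\sigma}(f\,\Dso g) = f\,\Ds{1+\sigma} g \;+\; (\Ds{\sigma} f)\,\Dso g \;+\; H_{\Ds{\sigma}}(f, \Dso g),
\]
and then bound each of the three summands in $L^{\frac{2d}{d-1+2\sigma}}$ separately: H\"older followed by Sobolev embedding (\Cref{la:sobolev}) for the first two, and \eqref{eq:comm:5} plus Sobolev for the commutator. The scaling identity
\[
 \tfrac{d-1+2\sigma}{2d} = \tfrac{d-2}{2d} + \tfrac{2\sigma+1}{2d} = \tfrac{d-2+2\sigma}{2d} + \tfrac{1}{2d}
\]
drives every H\"older split.

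For the first term I would use H\"older with exponents $\tfrac{2d}{d-2}$ and $\tfrac{2d}{2\sigma+1}$, then Sobolev to get $\|f\|_{L^{2d/(d-2)}} \aleq \|\Dso f\|_{L^2}$; this is the step that requires $d \geq 3$. For the second term I would use exponents $\tfrac{2d}{d-2+2\sigma}$ and $2d$ and then apply Sobolev twice, giving $\|\Ds{\sigma} f\|_{L^{2d/(d-2+2\sigma)}} \aleq \|\Dso f\|_{L^2}$ (via $\Ds{\sigma} = \lapms{1-\sigma}\Dso$) and $\|\Dso g\|_{L^{2d}} \aleq \|\Ds{1+\sigma} g\|_{L^{2d/(2\sigma+1)}}$ (via $\Dso = \lapms{\sigma}\Ds{1+\sigma}$).

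For the commutator term, I would restrict to $\sigma \in (0,1]$, noting that the case $\sigma = 0$ is trivial since $\Ds{0}$ is the identity and the estimate then reduces to the first term bound already established. Fixing any $\tau \in (0, \sigma)$ and applying \eqref{eq:comm:5} with $p_1 = \tfrac{2d}{d-2+2\tau}$, $p_2 = \tfrac{2d}{2\sigma+1-2\tau}$ gives
\[
 \|H_{\Ds{\sigma}}(f, \Dso g)\|_{L^{\frac{2d}{d-1+2\sigma}}} \aleq \|\Ds{\tau} f\|_{L^{p_1}}\, \|\Ds{\sigma+1-\tau} g\|_{L^{p_2}},
\]
and one more round of Sobolev closes out with $\|\Dso f\|_{L^2}\, \|\Ds{1+\sigma} g\|_{L^{2d/(2\sigma+1)}}$. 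I do not anticipate any real obstacle: the argument is routine bookkeeping, and the only constraints to keep in mind are the lower bound $d \geq 3$ (for the Sobolev step on $f$) and the requirement $\tau \in (0,\sigma)$ in \eqref{eq:comm:5}.
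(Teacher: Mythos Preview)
Your proposal is correct and follows essentially the same route as the paper: decompose via the Leibniz rule, estimate the two product terms by H\"older plus Sobolev (\Cref{la:sobolev}), and control the commutator by \eqref{eq:comm:5} followed by Sobolev. The only cosmetic difference is that the paper fixes the splitting parameter $\tau = \sigma/2$ in the commutator step, whereas you leave $\tau \in (0,\sigma)$ generic; otherwise the arguments coincide.
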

\begin{proof}
With the help of Leibniz rules and Sobolev embedding, \Cref{la:sobolev},
\[
\begin{split}
 &\|\Ds{\sigma} \brac{f\, \Dso{}g}\|_{L^{\frac{2d}{d-1+2\sigma}}(\R^d)}\\
 \aleq&\|\abs{\Ds{\sigma} f}\, \abs{\Dso{}g}\|_{L^{\frac{2d}{d-1+2\sigma}}(\R^d)} + \|\abs{f}\, \abs{\Ds{1+\sigma}g}\|_{L^{\frac{2d}{d-1+2\sigma}}(\R^d)} + \|H_{\Ds{\sigma}}(f,\Dso g)\|_{L^{\frac{2d}{d-1+2\sigma}}(\R^d)}\\
 \overset{\eqref{eq:comm:5}}{\aleq}&\|\Ds{\sigma} f\|_{L^{\frac{2d}{d-2(1-\sigma)}}(\R^d)}\, \|\Dso{}g\|_{L^{2d}(\R^d)} + \|f\|_{L^{\frac{2d}{d-2}}(\R^d)} \|\Ds{1+\sigma}g\|_{L^{\frac{2d}{2(1+\sigma)-1}}(\R^d)} \\
  &+ \|\Ds{\frac{\sigma}{2}}f\|_{L^{\frac{2d}{d-2+\sigma}}(\R^d)} \|\Ds{1+\frac{\sigma}{2}} g\|_{L^{\frac{2d}{1+\sigma}}(\R^d)}\\
\aleq&\|\Dso f\|_{L^2(\R^d)}\, \|\Ds{1+\sigma} g\|_{L^{\frac{2d}{2(1+\sigma)-1}}(\R^d)}.
 \end{split}
\]
\end{proof}

Next we record an estimate for another version of a sort of double Leibniz rule.
\begin{lemma}\label{la:commie3cde} 
For $\alpha \in (\frac{1}{2},1)$,  $d \geq 2$
\[
 \|H_{\Dso{}}(f h,g) - f  H_{\Dso}(h,g)\|_{L^2(\R^d)} \aleq \|\Ds{\alpha} f\|_{L^{\frac{2d}{2\alpha-1}}(\R^d)}\, \|\Ds{\alpha} g\|_{L^{\frac{2d}{2\alpha-1}}(\R^d)}\, \|h\|_{L^2(\R^d)}.
 \]
\end{lemma}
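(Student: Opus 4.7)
The plan is to reduce the claim directly to \Cref{la:commies3b} via the pointwise kernel representation of \Cref{la:Hdssformula}.

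\textbf{Step 1: Pointwise representation.} Using \Cref{la:Hdssformula} for $s=1$ and the telescoping identity
\[
f(x)h(x) - f(y)h(y) = f(x)\bigl(h(x)-h(y)\bigr) + \bigl(f(x)-f(y)\bigr) h(y),
\]
one computes
\[
H_{\Dso}(fh,g)(x) = f(x)\, H_{\Dso}(h,g)(x) + c\int_{\R^d} \frac{(f(x)-f(y))\,(g(x)-g(y))\, h(y)}{|x-y|^{d+1}}\, dy.
\]
Therefore
\[
\bigl| H_{\Dso}(fh,g)(x) - f(x)\, H_{\Dso}(h,g)(x)\bigr| \aleq \int_{\R^d} \frac{|f(x)-f(y)|\, |g(x)-g(y)|\,|h(y)|}{|x-y|^{d+1}}\,dy.
\]

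\textbf{Step 2: Apply \Cref{la:commies3b}.} Take $\alpha_1=\alpha_2=\alpha$. The hypothesis $\alpha > \tfrac{1}{2}$ guarantees $\alpha_1+\alpha_2 = 2\alpha > 1$. Choose $p_1 = p_2 = \tfrac{2d}{2\alpha-1}$; since $d\geq 2$ and $\alpha<1$, we have $2\alpha-1<1\leq d-1$, so $p_1,p_2 > 2$. Define $p_3$ by
\[
\frac{1}{p_3} = \frac{1}{2} - \frac{2\alpha-1}{d} = \frac{d-4\alpha+2}{2d},
\]
which is positive (since $4\alpha - 2 < 2 \leq d$) and satisfies $1/p_3 < 1/2$ exactly because $\alpha > 1/2$; thus $p_3 > 2$. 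A direct check gives
\[
\frac{dp_3}{d + (\alpha_1+\alpha_2 - 1)p_3} = \frac{dp_3}{d+(2\alpha-1)p_3} = 2 \in (1,\infty),
\]
verifying the remaining condition of \Cref{la:commies3b}. The lemma then yields
\[
\norm{\int_{\R^d}\frac{|f(x)-f(y)|\,|g(x)-g(y)|\,|h(y)|}{|x-y|^{d+1}}\,dy}_{L^2_x(\R^d)} \aleq \|\Ds{\alpha} f\|_{L^{\frac{2d}{2\alpha-1}}}\, \|\Ds{\alpha} g\|_{L^{\frac{2d}{2\alpha-1}}}\, \|h\|_{L^2},
\]
which combined with Step 1 gives the desired bound.

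\textbf{Expected difficulty.} The only real issue is bookkeeping: choosing the parameters $\alpha_i, p_i$ in \Cref{la:commies3b} so that the exponent on $h$ collapses to $2$ and all the admissibility constraints ($p_i > 2$, $\alpha_1+\alpha_2 > 1$, the $p_3$-condition) hold simultaneously. The symmetric choice $\alpha_1=\alpha_2=\alpha$, $p_1 = p_2 = \tfrac{2d}{2\alpha-1}$ is forced by the right-hand side of the claim, and then both $\alpha > 1/2$ and $d \geq 2$ are exactly the hypotheses one needs. Beyond this index-matching, no additional analytic ingredient is required.
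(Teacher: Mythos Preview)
Your proposal is correct and essentially identical to the paper's proof: both reduce via the kernel formula of \Cref{la:Hdssformula} to the triple integral, then invoke \Cref{la:commies3b} with $\alpha_1=\alpha_2=\alpha$, $p_1=p_2=\tfrac{2d}{2\alpha-1}$, $p_3=\tfrac{2d}{d+2-4\alpha}$ so that the $h$-exponent collapses to $2$. Your parameter checks are slightly more detailed than the paper's, but there is no substantive difference.
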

\begin{proof}
By \Cref{la:Hdssformula},
\[
\begin{split}
 &\brac{H_{\Dso{}}(f h,g) - f  H_{\Dso}(h,g)}(x)\\
 =& c\int_{\R^d} \frac{\brac{f(x) h(x) - f(y) h(y)} (g(x)-g(y))}{|x-y|^{d+1}}\, dy-\int_{\R^d} \frac{f(x)\brac{h(x) - h(y)} (g(x)-g(y))}{|x-y|^{d+1}}\, dy\\
 =& c\int_{\R^d} \frac{\brac{\brac{f(x) h(x) - f(y) h(y)} -f(x)\brac{h(x) - h(y)}} (g(x)-g(y))}{|x-y|^{d+1}}\, dy\\
 =& c\int_{\R^d} \frac{\brac{f(x) - f(y)} (g(x)-g(y))}{|x-y|^{d+1}}\, h(y) dy\\
 \end{split}
\]
The claim now follows from \Cref{la:commies3b}, taking $p_1 = \frac{2d}{2\alpha-1}$, $p_2 = \frac{2d}{2\alpha-1}$ and $p_3 := \frac{2d}{d+2-4\alpha} > 2$, i.e. $\frac{dp_3}{d+(2\alpha -1)p_3} = 2$.
\end{proof}

\section{Decay estimate in time}\label{s:decay}
In this section we prove the main estimate for \Cref{th:main} which is
\begin{theorem}\label{th:detest}
Let $u,v: [0,T] \to \R^d$ be smooth solutions to the half-wave map equation \eqref{eq:half-wavemapseq}. Set 
\[
 \mathcal{E}(t) := \frac{1}{2} \brac{\|D_x (\vec{u}-\vec{v})(t)\|_{L^2(\R^d)}^2+\|\partial_t (\vec{u}-\vec{v})(t)\|_{L^2(\R^d)}^2}
\]
Then, for any $\alpha > 1$,
\[
 \dot{\mathcal{E}}(t) \leq  \Sigma(t) \mathcal{E}(t),
\]
where for any $\alpha >1$ we can estimate
\begin{equation}\label{eq:th:detest:est}
\begin{split}
 \Sigma(t) \aleq & \|\Ds{\alpha} \vec{u}(t)\|_{L^{\frac{2d}{2\alpha -1}}(\R^d)}^2 + \|\Ds{\alpha} \vec{v}(t)\|_{L^{\frac{2d}{2\alpha -1}}(\R^d)}^2\\
 &+\|\Dso{} \vec{u}(t)\|_{L^{(2d,2)}(\R^d)}^2 + \|\Dso{} \vec{v}(t)\|_{L^{(2d,2)}(\R^d)}^2\\
 \end{split}
\end{equation}
\end{theorem}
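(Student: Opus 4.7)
\emph{Proof plan.} Set $\vec{w} := \vec{u} - \vec{v}$. Subtracting two copies of \eqref{eq:solks} and performing the standard energy identity on $\R^d$ yields
\[
\dot{\mathcal{E}}(t) = \int_{\R^d} \partial_t \vec{w} \cdot \bigl(R[\vec{u}] - R[\vec{v}]\bigr)\, dx,
\]
where $R[\vec{u}]$ denotes the right-hand side of \eqref{eq:solks}. The strategy, following the spirit of Shatah--Struwe, is to decompose $R[\vec{u}] - R[\vec{v}]$ into pieces each carrying one factor of $\vec{w}$, $\Dso\vec{w}$, or $\partial_t\vec{w}$, and to bound every piece by $\Sigma(t)\, \mathcal{E}(t)$ via H\"older, Sobolev embedding, and the fractional Leibniz/commutator estimates of \Cref{s:prelim}.

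\emph{Pointwise terms.} The first four contributions, $\vec{u}|\nabla\vec{u}|^2 - \vec{u}|\Dso\vec{u}|^2 + \Dso\vec{u}\,\scpr{\vec{u}}{\Dso\vec{u}}$, split upon subtraction into a finite number of pieces of the form (factor in $\vec{w}$ or a derivative of $\vec{w}$) times $A(\vec{u},\vec{v})$, where $A$ carries two derivatives at the natural scaling. These are handled by placing $\partial_t\vec{w}$ in $L^2$, the derivative-of-$\vec{w}$ factor in $L^2$ (using \Cref{la:nablalaps} to swap $\nabla$ for $\Dso$), and placing $A$ in $L^\infty$ via the endpoint Sobolev embedding \eqref{eq:limitsobLinfty} (which produces $\|\Dso\vec{u}\|_{L^{(2d,2)}}^2$), or in a finite $L^p$-space via the Gagliardo--Nirenberg--Sobolev inequality \Cref{co:gagnirsob} (producing $\|\Ds{\alpha}\vec{u}\|_{L^{2d/(2\alpha-1)}}$). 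Pieces with $\vec{w}$ undifferentiated are handled by the $d\ge 3$ Sobolev inequality $\|\vec{w}\|_{L^{2d/(d-2)}}\aleq\|\Dso\vec{w}\|_{L^2}$.

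\emph{Tangential block.} The last contribution $\vec{u}\wedge\Dso(\vec{u}\wedge\Dso\vec{u}) - \vec{u}\wedge(\vec{u}\wedge(-\lap)\vec{u})$ requires a key algebraic reduction: writing $(-\lap)\vec{u} = \Dso(\Dso\vec{u})$ and using $\Dso\vec{u}\wedge\Dso\vec{u} = 0$ together with the Leibniz identity, this expression collapses to $\vec{u}\wedge H_{\Dso}(\vec{u},\Dso\vec{u})$. Its difference across $\vec{u}$ and $\vec{v}$ splits as
\[
\vec{w}\wedge H_{\Dso}(\vec{u},\Dso\vec{u}) \;+\; \vec{v}\wedge H_{\Dso}(\vec{w},\Dso\vec{u}) \;+\; \vec{v}\wedge H_{\Dso}(\vec{v},\Dso\vec{w}).
\]
The first summand is estimated by the $L^\infty$-commutator bound \eqref{eq:comm:3} combined with Sobolev for $\vec{w}$. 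For the third, I would integrate by parts in space to move a $\Dso$ from $H_{\Dso}(\vec{v},\Dso\vec{w})$ onto $\partial_t\vec{w}$ and then apply the differentiated commutator bound \Cref{la:commie2} so that $\vec{w}$ only retains one derivative. The middle summand is the most intricate; it is attacked via the pointwise representation \Cref{la:Hdssformula} followed by the trilinear estimate \Cref{la:commies3b}, equivalently \Cref{la:commie3cde}, which is designed precisely to extract the $\vec{v}$-factor without losing regularity on $\vec{w}$.

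\emph{Main obstacle.} The central difficulty is that in each trilinear piece the two derivatives on $\vec{u},\vec{v}$ must be distributed as one $\Dso$ (an $L^{(2d,2)}$ factor absorbed into $\Sigma$ via endpoint Sobolev) and one $\Ds{\alpha}$ with $\alpha > 1$ (an $L^{2d/(2\alpha-1)}$ factor), rather than as two $\Dso$'s, which would fall outside the assumed class. This forces a systematic use of the splitting parameter $\gamma$ in the commutator estimates \Cref{la:commie2} and \Cref{la:doublecommie}, and is precisely where the hypothesis $\alpha > 1$ and the Lorentz refinement become essential. Once every term is dominated by $\|\partial_t\vec{w}\|_{L^2}\,\bigl(\|\Dso\vec{w}\|_{L^2} + \|\partial_t\vec{w}\|_{L^2}\bigr)\,\Sigma(t)$, AM--GM delivers \eqref{eq:th:detest:est}.
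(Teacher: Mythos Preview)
Your overall architecture---energy identity, the four-line splitting of $\partial_{tt}\vec{w}-\lap\vec{w}$, and the telescopic decomposition of the tangential block into three summands via $H_{\Dso}$---matches the paper's. But the proposed treatment of the third summand $\vec{v}\wedge H_{\Dso}(\vec{v}\wedge,\Dso\vec{w})$ contains a genuine gap, and this is exactly the term that drives the length of the paper's argument.

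\textbf{The main problem.} Your plan to ``integrate by parts in space to move a $\Dso$ from $H_{\Dso}(\vec{v},\Dso\vec{w})$ onto $\partial_t\vec{w}$'' cannot work: the energy $\mathcal{E}(t)$ controls only $\|\partial_t\vec{w}\|_{L^2}$, not $\|\Dso\partial_t\vec{w}\|_{L^2}$. Nor can one simply estimate $H_{\Dso}(\vec{v},\Dso\vec{w})$ in $L^2$: by \eqref{eq:comm:5} this costs $\|\Ds{2-\sigma}\vec{w}\|$ for some $\sigma\in(0,1)$, i.e.\ strictly more than one derivative on $\vec{w}$. The paper's route is completely different: it first expands $\vec{v}\wedge(\cdot\wedge\cdot)$ via the BAC--CAB identity into four scalar pieces (equation \eqref{eq:verylastpart:split1}), three of which are manageable. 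For the worst piece $\sum_j v^j H_{\Dso}(v^j,\Dso w^i)$ it \emph{uses the half-wave equation itself} to replace $\partial_t\vec{w} = \vec{w}\wedge\Dso\vec{u} + \vec{v}\wedge\Dso\vec{w}$, cf.\ \eqref{eq:verylastpart:split2}. The resulting term
\[
\int_{\R^d}\sum_{i,j} v^j\,H_{\Dso}(v^j,\Dso w^i)\,\bigl(\vec{v}\wedge\Dso\vec{w}\bigr)^i
\]
is then rewritten as a determinant, and an antisymmetry cancellation (two collinear columns) together with a further integration by parts reduces it to the ``second-order Leibniz'' form $H_{\Dso}(v^j\Gamma^\ell,v^m)-v^jH_{\Dso}(\Gamma^\ell,v^m)$ estimated by \Cref{la:commie3cde}. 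The companion piece paired with $\vec{w}\wedge\Dso\vec{u}$ requires \Cref{la:235236}, which in turn relies on the double commutator \Cref{la:doublecommie}. None of this machinery is invoked in your sketch; you cite \Cref{la:commie3cde} and \Cref{la:doublecommie} but attach them to the middle summand, where they are not needed.

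\textbf{Secondary issues.} (i) For the first summand $\vec{w}\wedge H_{\Dso}(\vec{u}\wedge,\Dso\vec{u})$, applying \eqref{eq:comm:3} would give $\|\Dso\vec{u}\|_{L^{2d,2}}\|\Ds{2}\vec{u}\|_{L^{2d,2}}$, whose second factor is not of the form $\|\Ds{\alpha}\vec{u}\|_{L^{2d/(2\alpha-1)}}$; the paper instead places this commutator in $L^d$ via \eqref{eq:comm:5} and pairs it with $\|\vec{w}\|_{L^{2d/(d-2)}}$. (ii) Your ``pointwise terms'' paragraph omits the Shatah--Struwe orthogonality trick $\langle\vec{v},\partial_t\vec{w}\rangle=-\langle\vec{w},\partial_t\vec{u}\rangle$ (equation \eqref{eq:struwetrick}), which is what allows the second line of \eqref{eq:partialttwsplit} to close; and for the third line you need the identity $\scpr{\vec{u}}{\Dso\vec{u}}=-\tfrac12 H_{\Dso}(\vec{u}\cdot,\vec{u})$ (equation \eqref{eq:ucdotu}) to gain the commutator structure. (iii) Contrary to your claim, in the paper it is the \emph{third} summand that is by far the most intricate, not the middle one.
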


It remains to prove \Cref{th:detest}.
\subsection{Proof of Theorem~\ref{th:detest}}
We observe
\[
\begin{split}
\dot{\mathcal{E}}(t) =& \frac{1}{2} \frac{d}{dt}\brac{\|\nabla (\vec{u}-\vec{v})\|_{L^2(\R^d)}^2+\|\partial_t (\vec{u}-\vec{v})\|_{L^2(\R^d)}^2}\\
 =&\int_{\R^d} \langle \partial_{tt} \vec{w} - \lap \vec{w}, \partial_t \vec{w}\rangle\\
\end{split}
 \]

So what we need to do is multiply the equation for $\partial_{tt} \vec{w}- \lap \vec{w}$ with $\partial_t \vec{w}$. From the equation \eqref{eq:half-wavemapseq} for $u$ and $v$, respectively, we find that
\begin{equation}\label{eq:partialttwsplit}
\begin{split}
 \partial_{tt} \vec{w} - \lap \vec{w} =& \vec{u} |\nabla \vec{u}|^2-\vec{v} |\nabla \vec{v}|^2\\
 & +\vec{v} |\Dso{}\vec{v}|^2- \vec{u} |\Dso{}\vec{u}|^2\\
 &+ \Dso{} \vec{u}\, \brac{\scpr{\vec{u}}{\Dso{} \vec{u}}}-\Dso{} \vec{v}\, \brac{\vec{v} \cdot \Dso{} \vec{v}}\\
 &+ \vec{u} \wedge [\Dso{}, \vec{u} \wedge](\Dso{} \vec{u}) -\vec{v} \wedge  [\Dso{}, \vec{v} \wedge ](\Dso{} \vec{v}).
 \end{split}
\end{equation}
Here we recall the commutator notation
\[
 [T,f](g) = T(fg)-fT(g).
\]

We will prove the estimate of \Cref{th:detest} by estimating each line in \eqref{eq:partialttwsplit}, which will become increasingly more challenging, the last line being the most involved estimate. Having said that, the difficulties are mostly of algebraic nature, and the actual estimates rely on the fractional Leibniz rule discussed in \Cref{s:prelim}.

\medskip 

\underline{Repeating estimates}

\medskip 

Throughout the remainder of the section we will use \Cref{la:nablalaps} implicitly -- without further mentioning. 

Moreover, observe that for $\frac{1}{2}<\alpha_1 < \alpha_2< d+\frac{1}{2}$ we have from Sobolev embedding, \Cref{la:sobolev},
\[
  \|\Ds{\alpha_1} \vec{u}\|_{L^{\frac{2d}{2\alpha_1-1}} (\R^d)} \aleq                                                                                                                                                                                                             
  \|\Ds{\alpha_2} \vec{u}\|_{L^{\frac{2d}{2\alpha_2-1}} (\R^d)} 
  \]
In particular for any $\alpha \in [1,d+\frac{1}{2})$,
\[
  \|\Dso \vec{u}\|_{L^{2d} (\R^d)} \aleq                                                                                                                                                                                                             
  \|\Ds{\alpha} \vec{u}\|_{L^{\frac{2d}{2\alpha-1}} (\R^d)} 
  \]
This will be also used frequently and implicitly -- in particular to obtain the estimate in \Cref{th:detest} from the lemmata below.

\medskip

\underline{Estimating the first line of \eqref{eq:partialttwsplit}}

\medskip

We begin with the following estimate which is proven in Shatah-Struwe \cite{SS02}. 
\begin{lemma}\label{la:firstline}
For $d \geq 3$,
\[
\begin{split}
&\abs{\int_{\R^d}  \langle \vec{u}\, |\nabla \vec{u}|^2-\vec{v} \, |\nabla \vec{v}|^2,\, \partial_t (\vec{u}-\vec{v}) \rangle}\\
\aleq&\brac{\|\nabla (\vec{u}-\vec{v})\|_{L^2(\R^d)}^2+\|\partial_t (\vec{u}-\vec{v})\|_{L^2(\R^d)}^2}\, \brac{\|\Dso \vec{u}\|_{L^{2d}(\R^d)}^2+\|\Dso \vec{v}\|_{L^{2d}(\R^d)}^2}
\end{split}
 \]
\end{lemma}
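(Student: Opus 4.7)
The natural first move is to set $\vec{w} := \vec{u}-\vec{v}$ and algebraically split the integrand so that either $\vec{w}$ or $\nabla \vec{w}$ appears as a factor. A convenient decomposition is
\[
\vec{u}\,|\nabla\vec{u}|^2 - \vec{v}\,|\nabla\vec{v}|^2 = \vec{w}\,|\nabla\vec{u}|^2 + \vec{v}\,\scpr{\nabla\vec{w}}{\nabla(\vec{u}+\vec{v})},
\]
which produces two integrals $I_1$ and $I_2$ against $\partial_t\vec{w}$ to be handled separately.

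The straightforward piece is $I_1 = \int_{\R^d} |\nabla\vec{u}|^2 \scpr{\vec{w}}{\partial_t\vec{w}}$. I would apply H\"older with exponents $(d,\tfrac{2d}{d-2},2)$, use the Sobolev embedding $\|\vec{w}\|_{L^{2d/(d-2)}(\R^d)} \aleq \|\nabla\vec{w}\|_{L^2(\R^d)}$ (available because $d\geq 3$), and close by Young's inequality so that $\|\nabla\vec{w}\|_{L^2}\|\partial_t\vec{w}\|_{L^2}$ is distributed into the energy $\|\nabla\vec{w}\|_{L^2}^2+\|\partial_t\vec{w}\|_{L^2}^2$.

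The main obstacle is $I_2 = \int_{\R^d} \scpr{\nabla\vec{w}}{\nabla(\vec{u}+\vec{v})}\,\scpr{\vec{v}}{\partial_t\vec{w}}$, since a naive H\"older allocation would ask for an $L^{2d/(d-1)}$-norm of $\partial_t\vec{w}$, which is not part of the energy. The key idea is to use the sphere constraint: since $|\vec{u}|=|\vec{v}|\equiv 1$, both $\scpr{\vec{u}}{\partial_t\vec{u}}$ and $\scpr{\vec{v}}{\partial_t\vec{v}}$ vanish, hence
\[
\scpr{\vec{v}}{\partial_t\vec{w}} = \scpr{\vec{v}}{\partial_t\vec{u}} = \scpr{\vec{v}-\vec{u}}{\partial_t\vec{u}} = -\scpr{\vec{w}}{\partial_t\vec{u}}.
\]
Contracting with $\vec{v}$ therefore trades a derivative of $\vec{w}$ for a pointwise factor of $\vec{w}$ times $\partial_t\vec{u}$. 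The half-wave map equation \eqref{eq:half-wavemapseq} together with $|\vec{u}|=1$ gives the pointwise bound $|\partial_t\vec{u}|=|\vec{u}\wedge\Dso\vec{u}|\leq|\Dso\vec{u}|$.

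With this trade performed, the integrand of $I_2$ is dominated by $|\nabla\vec{w}|\,|\nabla(\vec{u}+\vec{v})|\,|\vec{w}|\,|\Dso\vec{u}|$. I would then apply H\"older with exponents $(2,2d,\tfrac{2d}{d-2},2d)$ (whose reciprocals do sum to one in dimension $d\geq 3$) and the Sobolev embedding for $\vec{w}$ as above, producing
\[
|I_2| \aleq \|\nabla\vec{w}\|_{L^2(\R^d)}^2 \, \|\Dso\vec{u}\|_{L^{2d}(\R^d)}\brac{\|\Dso\vec{u}\|_{L^{2d}(\R^d)}+\|\Dso\vec{v}\|_{L^{2d}(\R^d)}}.
\]
Using \Cref{la:nablalaps} to identify $\nabla$ with $\Dso$ in $L^{2d}$ and absorbing cross terms into sums of squares, the combination of the $I_1$ and $I_2$ bounds yields exactly the stated estimate.
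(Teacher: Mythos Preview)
Your proof is correct and follows essentially the same route as the paper: it is precisely the Shatah--Struwe argument referenced there (and replicated in the proof of \Cref{la:secondline}), namely splitting off $(\vec{u}-\vec{v})|\nabla\vec{u}|^2$, then using the tangency identity $\scpr{\vec{v}}{\partial_t(\vec{u}-\vec{v})}=-\scpr{\vec{u}-\vec{v}}{\partial_t\vec{u}}$ together with $|\partial_t\vec{u}|\leq|\Dso\vec{u}|$ from \eqref{eq:half-wavemapseq}, followed by H\"older and Sobolev. Your two-term decomposition merely packages the paper's second and third terms into one via $\nabla(\vec{u}+\vec{v})$, which is a cosmetic difference.
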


\medskip

\underline{Estimating the Second line of \eqref{eq:partialttwsplit}}

\medskip 

In a similar spirit to \Cref{la:firstline} we can also obtain

\begin{lemma}\label{la:secondline}
For $d\geq 3$ we have
\[
\begin{split}
& \abs{\int_{\R^d}  \langle \vec{u} |\Dso{}\vec{u}|^2-\vec{v} |\Dso{}\vec{v}|^2,\, \partial_t (\vec{u}-\vec{v}) \rangle} \\
\aleq&\brac{\|\nabla (\vec{u}-\vec{v})\|_{L^2(\R^d)}^2+\|\partial_t (\vec{u}-\vec{v})\|_{L^2(\R^d)}^2}\, \brac{\|\Dso \vec{u}\|_{L^{2d}(\R^d)}^2+\|\Dso \vec{v}\|_{L^{2d}(\R^d)}^2}
\end{split}
 \]
\end{lemma}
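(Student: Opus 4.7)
Writing $\vec{w}=\vec{u}-\vec{v}$ and using the polarization identity $|\Dso\vec{u}|^2-|\Dso\vec{v}|^2=\langle\Dso(\vec{u}+\vec{v}),\Dso\vec{w}\rangle$, I decompose
\begin{equation*}
 \vec{u}\,|\Dso\vec{u}|^2-\vec{v}\,|\Dso\vec{v}|^2 \;=\; \vec{w}\,|\Dso\vec{u}|^2 \;+\; \vec{v}\,\langle\Dso(\vec{u}+\vec{v}),\Dso\vec{w}\rangle,
\end{equation*}
so that pairing with $\partial_t\vec{w}$ produces two integrals
\begin{equation*}
 I_1 = \int_{\R^d}\langle\vec{w},\partial_t\vec{w}\rangle\,|\Dso\vec{u}|^2, \qquad I_2 = \int_{\R^d}\langle\vec{v},\partial_t\vec{w}\rangle\,\langle\Dso(\vec{u}+\vec{v}),\Dso\vec{w}\rangle.
\end{equation*}
The first term $I_1$ is harmless: H\"older with exponents $(\tfrac{2d}{d-2},2,d)$, Sobolev embedding \Cref{la:sobolev} in the form $\|\vec{w}\|_{L^{2d/(d-2)}}\aleq\|\Dso\vec{w}\|_{L^2}$, the comparability $\|\Dso\vec{w}\|_{L^2}\aeq\|\nabla\vec{w}\|_{L^2}$ of \Cref{la:nablalaps}, and Cauchy--Schwarz $ab\leq\tfrac12(a^2+b^2)$ give
\begin{equation*}
 |I_1|\aleq(\|\nabla\vec{w}\|_{L^2}^2+\|\partial_t\vec{w}\|_{L^2}^2)\,\|\Dso\vec{u}\|_{L^{2d}}^2.
\end{equation*}

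The genuinely delicate term is $I_2$. A brute-force H\"older would require placing $\Dso\vec{w}$ in $L^{2d/(d-1)}$, which is not controlled by the energy, so one must extract more structure. Since $|\vec{u}|\equiv 1\equiv|\vec{v}|$, differentiating in $t$ gives $\vec{u}\cdot\partial_t\vec{u}\equiv 0$ and $\vec{v}\cdot\partial_t\vec{v}\equiv 0$, so that
\begin{equation*}
 \langle\vec{v},\partial_t\vec{w}\rangle \;=\; \langle\vec{v},\partial_t\vec{u}\rangle - \langle\vec{v},\partial_t\vec{v}\rangle \;=\; \langle\vec{v}-\vec{u},\partial_t\vec{u}\rangle \;=\; -\langle\vec{w},\partial_t\vec{u}\rangle.
\end{equation*}
This is in the same spirit as the Shatah--Struwe cancellation behind \Cref{la:firstline}. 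Crucially, the half-wave map equation \eqref{eq:half-wavemapseq} itself converts the surviving time derivative into a spatial one pointwise: $|\partial_t\vec{u}|=|\vec{u}\wedge\Dso\vec{u}|\leq|\Dso\vec{u}|$ almost everywhere.

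Combining these observations yields the pointwise majoration $|I_2|\leq\int|\vec{w}|\,|\Dso\vec{u}|\,|\Dso(\vec{u}+\vec{v})|\,|\Dso\vec{w}|$, and H\"older with exponents $(\tfrac{2d}{d-2},2d,2d,2)$, which balance since $\tfrac{d-2}{2d}+\tfrac{1}{2d}+\tfrac{1}{2d}+\tfrac12=1$, combined with the same Sobolev step as before and AM--GM applied to the factor $\|\Dso\vec{u}\|_{L^{2d}}(\|\Dso\vec{u}\|_{L^{2d}}+\|\Dso\vec{v}\|_{L^{2d}})$, gives
\begin{equation*}
 |I_2|\aleq\|\nabla\vec{w}\|_{L^2}^2\,\bigl(\|\Dso\vec{u}\|_{L^{2d}}^2+\|\Dso\vec{v}\|_{L^{2d}}^2\bigr).
\end{equation*}
Adding the bounds for $I_1$ and $I_2$ completes the proof. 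The only real obstacle is the $I_2$ step, where the tangency relations $\vec{u}\cdot\partial_t\vec{u}=\vec{v}\cdot\partial_t\vec{v}=0$ \emph{and} the half-wave map equation must be used in tandem to avoid an uncontrollable Lebesgue exponent on $\Dso\vec{w}$; everything else is standard H\"older and Sobolev, as in Shatah--Struwe.
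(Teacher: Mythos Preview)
Your proof is correct and follows essentially the same approach as the paper: the paper uses the identical decomposition (writing the second piece as two summands $\vec{v}\langle\Dso\vec{w},\Dso\vec{u}\rangle+\vec{v}\langle\Dso\vec{v},\Dso\vec{w}\rangle$ rather than your combined $\vec{v}\langle\Dso(\vec{u}+\vec{v}),\Dso\vec{w}\rangle$), the same Shatah--Struwe tangency trick $\langle\vec{v},\partial_t\vec{w}\rangle=-\langle\vec{w},\partial_t\vec{u}\rangle$, the same use of the half-wave map equation to bound $|\partial_t\vec{u}|\leq|\Dso\vec{u}|$, and the same H\"older--Sobolev conclusion.
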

\begin{proof}
We split
\begin{equation}\label{eq:split:1}
\begin{split}
 &\vec{u} |\Dso{}\vec{u}|^2-\vec{v} |\Dso{} \vec{v}|^2\\
 =& (\vec{u}-\vec{v})|\Dso{} \vec{u}|^2 + \vec{v}\langle \Dso{} (\vec{u}-\vec{v}),\Dso{} \vec{u}\rangle +\vec{v}\langle \Dso{} \vec{v},\Dso{} (\vec{u}-\vec{v})\rangle
 \end{split}
\end{equation}
From H\"older's inequality and Sobolev inequality, \Cref{la:sobolev},
\[
\begin{split}
 &\abs{\int_{\R^d}  |\Dso{} \vec{u}|^2 \langle \vec{u}-\vec{v}, \partial_t (\vec{u}-\vec{v}) \rangle}\\
 \aleq& \|\Dso{}\vec{u} \|_{L^{2d}(\R^d)}^2 \|\vec{u}-\vec{v}\|_{L^{\frac{2d}{d-2}}(\R^d)}\, \|\partial_t (\vec{u}-\vec{v})\|_{L^2(\R^d)}\\
  \aleq& \|\Dso{}\vec{u} \|_{L^{2d}(\R^d)}^2 \|\nabla (\vec{u}-\vec{v})\|_{L^{2}(\R^d)}\, \|\partial_t (\vec{u}-\vec{v})\|_{L^2(\R^d)}\\
  \aleq& \|\Dso{}\vec{u} \|_{L^{2d}(\R^d)}^2 \brac{\|\nabla (\vec{u}-\vec{v})\|_{L^{2}(\R^d)}^2 + \|\partial_t (\vec{u}-\vec{v})\|_{L^2(\R^d)}^2}.
 \end{split}
\]
This provides the desired estimate for the first term in \eqref{eq:split:1}.

The second and third term in \eqref{eq:split:1} are very similar, we only estimate the second one. Here we use the trick from \cite{SS02} that they used to obtain \Cref{la:firstline}: Since $\vec{u} \cdot \partial_t \vec{u}  = 0$ and $\vec{v} \cdot \partial_t \vec{v}=0$
\begin{equation}\label{eq:struwetrick}
\langle \vec{v}, \partial_t (\vec{u}-\vec{v}) \rangle = -\langle \vec{u}-\vec{v}, \partial_t \vec{u}  \rangle.
\end{equation}
Using that $u$ solves the half-wave map equation \eqref{eq:half-wavemapseq} and $|\vec{u}| \equiv 1$ we conclude
\[
 \abs{\langle \vec{v}, \partial_t (\vec{u}-\vec{v}) \rangle} \aleq \abs{\vec{u}-\vec{v}}\, |\Dso{}\vec{u}|.
\]
Thus using H\"older inequality and Sobolev inequality, \Cref{la:sobolev}, as before,
\[
\begin{split}
 &\abs{\int_{\R^d}  \langle \Dso{} (\vec{u}-\vec{v}),\Dso{} \vec{u}\rangle \langle \vec{v}, \partial_t (\vec{u}-\vec{v}) \rangle}\\
 \leq&\int_{\R^d}  |\Dso{} (\vec{u}-\vec{v})|\, |\Dso{} \vec{u}|\, \abs{\vec{u}-\vec{v}}\, |\Dso{}\vec{u}| \\
 \aleq& \|\Dso{} \vec{u}\|_{L^{2d}(\R^d)}^2\, \|\nabla (\vec{u}-\vec{v})\|_{L^2(\R^d)}\, \|\nabla (\vec{u}-\vec{v})\|_{L^2(\R^d)}\\
 \aeq& \|\nabla \vec{u}\|_{L^{2d}(\R^d)}^2\, \|\nabla (\vec{u}-\vec{v})\|_{L^2(\R^d)}^2.
 \end{split}
\]
\end{proof}

\medskip

\underline{Estimating the third line of \eqref{eq:partialttwsplit}}

\medskip

We recall our notation for the Leibniz rule operator
\[
 H_{T}(a,b) = T(ab) - aTb - (Ta)b.
\]
Observe that since $|\vec{u}|^2 \equiv 1$ we have 
\begin{equation}\label{eq:ucdotu}
 \scpr{\vec{u}}{\Dso{} \vec{u}} = -\frac{1}{2} H_{\Dso{}} (\vec{u} \cdot,\vec{u}) \equiv  -\frac{1}{2} \sum_{i=1}^3\, H_{\Dso{}} (u^i,u^i)
\end{equation}
So we consider 
\begin{equation}\label{eq:part3}
\begin{split}
&\Dso{} \vec{u}\, \scpr{\vec{u}}{\Dso{} \vec{u}}-\Dso{} \vec{v}\, \scpr{\vec{v}}{\Dso{} \vec{v}}\\
=&-\frac{1}{2}\brac{\Dso{} \vec{u}\, H_{\Dso{}} (\vec{u} \cdot,\vec{u}) - \Dso{} \vec{v}\, H_{\Dso{}} (\vec{v}\cdot,v)}\\
=&-\frac{1}{2}\Dso{} (\vec{u}-\vec{v})\, H_{\Dso{}} (\vec{u} \cdot,\vec{u})\\
&-\frac{1}{2}\Dso{} \vec{v}\, H_{\Dso{}} ((\vec{u}-\vec{v})\cdot,\vec{u})\\
&-\frac{1}{2}\Dso{} \vec{v}\, H_{\Dso{}} (\vec{v}\cdot,\vec{u}-\vec{v})
\end{split}
\end{equation}

Regarding the first term in \eqref{eq:part3} we observe that this is a more complicated structure to estimate, since two terms including $\vec{u}-\vec{v}$ appear to the full differential order, and we are not aware of a trick in the spirit of \cite{SS02} that would change that. Instead we use the commutator structure of $H_{\Dso{}}(\cdot,\cdot)$.
\begin{lemma}
For $d \geq 2$ we have
\[
\begin{split}
& \abs{\int_{\R^d}  \langle \Dso{} (\vec{u}-\vec{v}), \partial_t (\vec{u}-\vec{v})\rangle H_{\Dso{}} (\vec{u} \cdot,\vec{u})} \\
\aleq&\|\nabla (\vec{u}-\vec{v})\|_{L^2(\R^d)}\,\|\partial_t (\vec{u}-\vec{v})\|_{L^2(\R^d)}\, \|\Dso \vec{u}\|_{L^{2d,2}}^2.
\end{split}
 \]
\end{lemma}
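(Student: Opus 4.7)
The plan is to apply Cauchy–Schwarz directly, placing the commutator factor $H_{\Dso{}}(\vec{u}\cdot,\vec{u})$ in $L^\infty$ so that the two factors involving $\vec{u}-\vec{v}$ can be absorbed into $L^2$. Concretely, I would estimate
\[
\left|\int_{\R^d} \langle \Dso{}(\vec{u}-\vec{v}), \partial_t(\vec{u}-\vec{v})\rangle\, H_{\Dso{}}(\vec{u}\cdot,\vec{u})\right| \leq \|\Dso{}(\vec{u}-\vec{v})\|_{L^2(\R^d)}\,\|\partial_t(\vec{u}-\vec{v})\|_{L^2(\R^d)}\,\|H_{\Dso{}}(\vec{u}\cdot,\vec{u})\|_{L^\infty(\R^d)}.
\]
By \Cref{la:nablalaps}, $\|\Dso{}(\vec{u}-\vec{v})\|_{L^2(\R^d)} \aeq \|\nabla(\vec{u}-\vec{v})\|_{L^2(\R^d)}$, so the first two factors already have the desired form.

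It remains to bound $H_{\Dso{}}(\vec{u}\cdot,\vec{u})$ in $L^\infty$. Writing $H_{\Dso{}}(\vec{u}\cdot,\vec{u}) = \sum_{i=1}^3 H_{\Dso{}}(u^i,u^i)$ as in \eqref{eq:ucdotu} and applying the Lorentz-refined commutator estimate \eqref{eq:comm:3} to each scalar term, I obtain
\[
\|H_{\Dso{}}(\vec{u}\cdot,\vec{u})\|_{L^\infty(\R^d)} \aleq \sum_{i=1}^3 \|\Dso{} u^i\|_{L^{2d,2}(\R^d)}^2 \aleq \|\Dso{} \vec{u}\|_{L^{(2d,2)}(\R^d)}^2,
\]
which is exactly the form stated in the lemma.

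The main point to be careful about is the use of the Lorentz-space version \eqref{eq:comm:3} rather than a plain $L^p$ commutator bound; since the embedding $L^{2d,2} \hookrightarrow L^\infty$ fails via the Sobolev inequality with $L^{2d}$ on the right-hand side (one needs the Lorentz refinement \eqref{eq:limitsobLinfty}), the $L^\infty$ bound on $H_{\Dso{}}$ inherently requires the $L^{(2d,2)}$ norm. Once this is in place the argument is a one-line Hölder estimate, and no further commutator manipulation or algebraic trick of the Shatah–Struwe type is needed — indeed this is precisely why the authors opted to exploit the commutator structure of the first term in \eqref{eq:part3} rather than attempting to mimic the trick \eqref{eq:struwetrick}.
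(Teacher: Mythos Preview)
Your proposal is correct and follows essentially the same approach as the paper: apply H\"older's inequality to place $\Dso(\vec{u}-\vec{v})$ and $\partial_t(\vec{u}-\vec{v})$ in $L^2$, then bound $\|H_{\Dso}(\vec{u}\cdot,\vec{u})\|_{L^\infty}$ via \eqref{eq:comm:3}. The additional commentary on why the Lorentz refinement is needed is accurate but not part of the paper's (very brief) proof.
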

\begin{proof}
By H\"older's inequality,
\[
  \abs{\int_{\R^d}  \langle \Dso{} (\vec{u}-\vec{v}), \partial_t (\vec{u}-\vec{v})\rangle H_{\Dso{}} (\vec{u} \cdot,\vec{u})} \aleq \|\nabla (\vec{u}-\vec{v})\|_{L^2(\R^d)}\,\|\partial_t (\vec{u}-\vec{v})\|_{L^2(\R^d)} \|H_{\Dso{}} (\vec{u} \cdot,\vec{u})\|_{L^\infty(\R^d)}.
\]
By \eqref{eq:comm:3} we have 
\[
 \|H_{\Dso{}} (\vec{u} \cdot,\vec{u})\|_{L^\infty(\R^d)} \aleq \|\Dso{} \vec{u}\|_{L^{2d,2}(\R^d)}^2.
\]
\end{proof}

Also the second and third term of \eqref{eq:part3} are relatively straight-forward to estimate using the commutator structure of $H_{\Dso}$.
\begin{lemma}
For $d \geq 2$, we have
\[
\begin{split}
& \abs{\int_{\R^d}  H_{\Dso{}} ((\vec{u}-\vec{v})\cdot,\vec{u})\, \langle \Dso{} \vec{v}, \partial_t (\vec{u}-\vec{v}) \rangle} \\
&+ \abs{\int_{\R^d}   H_{\Dso{}} (\vec{v}\cdot,\vec{u}-\vec{v})\, \langle \Dso{} \vec{v}, \partial_t (\vec{u}-\vec{v}) \rangle}\\
\aleq&\brac{\|\nabla (\vec{u}-\vec{v})\|_{L^2(\R^d)}^2+\|\partial_t (\vec{u}-\vec{v})\|_{L^2(\R^d)}^2}\, \brac{\|\nabla \vec{u}\|_{L^{2d}(\R^d)}+\|\nabla \vec{v}\|_{L^{2d}(\R^d)}}^2.
\end{split}
 \]
\end{lemma}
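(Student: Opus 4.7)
The plan is to treat the two terms identically, each via a standard H\"older split followed by the commutator estimate \eqref{eq:comm:4}. I will use \Cref{la:nablalaps} throughout to interchange $\nabla$ and $\Dso{}$ in $L^2$, and the fact that $H_{\Dso{}}(f,g)$ is symmetric in its arguments.

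First I would apply Cauchy--Schwarz to pull out $\partial_t(\vec{u}-\vec{v})$ in $L^2$, so that the remaining factor $\Dso{}\vec{v}\, H_{\Dso{}}(\cdot,\cdot)$ must be controlled in $L^2$. I would then apply H\"older with exponents $2d$ and $\tfrac{2d}{d-1}$, which gives
\[
\norm{\Dso{}\vec{v}\ H_{\Dso{}}(a,b)}_{L^2(\R^d)}
\aleq \norm{\Dso{}\vec{v}}_{L^{2d}(\R^d)}\, \norm{H_{\Dso{}}(a,b)}_{L^{\frac{2d}{d-1}}(\R^d)},
\]
where $(a,b)$ is either $(\vec{u}-\vec{v},\vec{u})$ or (by symmetry of $H_{\Dso{}}$) $(\vec{u}-\vec{v},\vec{v})$. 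The key estimate \eqref{eq:comm:4} then yields
\[
\norm{H_{\Dso{}}(\vec{u}-\vec{v},\vec{u})}_{L^{\frac{2d}{d-1}}(\R^d)}
\aleq \norm{\Dso{}(\vec{u}-\vec{v})}_{L^2(\R^d)}\ \norm{\Dso{}\vec{u}}_{L^{2d}(\R^d)},
\]
and analogously with $\vec{v}$ in place of $\vec{u}$ for the second term.

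Combining these steps for the first term,
\[
\abs{\int_{\R^d} H_{\Dso{}}((\vec{u}-\vec{v})\cdot,\vec{u})\, \scpr{\Dso{}\vec{v}}{\partial_t(\vec{u}-\vec{v})}}
\aleq \norm{\partial_t(\vec{u}-\vec{v})}_{L^2}\, \norm{\Dso{}(\vec{u}-\vec{v})}_{L^2}\, \norm{\Dso{}\vec{v}}_{L^{2d}}\, \norm{\Dso{}\vec{u}}_{L^{2d}},
\]
and the second term is estimated the same way, with $\|\Dso{}\vec{u}\|_{L^{2d}}$ replaced by $\|\Dso{}\vec{v}\|_{L^{2d}}$. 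Finally, two applications of Young's inequality $2AB \leq A^2+B^2$ (once on $\|\partial_t(\vec{u}-\vec{v})\|_{L^2}\cdot\|\Dso{}(\vec{u}-\vec{v})\|_{L^2}$ and once on the two $L^{2d}$ norms of $\Dso{}\vec{u}$, $\Dso{}\vec{v}$) produce the claimed bound.

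I do not expect any real obstacle here: both arguments of each commutator involve $\vec{u}-\vec{v}$ only once, and the commutator estimate \eqref{eq:comm:4} is exactly designed to place the low-regularity factor in $L^2$ while keeping the high-regularity factor in $L^{2d}$. The only mild care needed is to recognize that $H_{\Dso{}}(\vec{v},\vec{u}-\vec{v}) = H_{\Dso{}}(\vec{u}-\vec{v},\vec{v})$, so that the $\vec{u}-\vec{v}$ factor can be placed into the $L^2$ slot of \eqref{eq:comm:4} in both terms.
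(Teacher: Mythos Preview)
Your proposal is correct and essentially identical to the paper's proof: the paper applies H\"older with exponents $2$, $2d$, $\frac{2d}{d-1}$ and then invokes \eqref{eq:comm:4} exactly as you do, treating only the first term explicitly and remarking that the second follows by the same argument. Your use of the symmetry $H_{\Dso{}}(\vec{v},\vec{u}-\vec{v})=H_{\Dso{}}(\vec{u}-\vec{v},\vec{v})$ to place $\vec{u}-\vec{v}$ in the $L^2$ slot is precisely the point the paper leaves implicit.
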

\begin{proof}
We only consider the first term, the second follows from the same argument. By H\"older's inequality, and \eqref{eq:comm:4},
\[
\begin{split}
& \abs{\int_{\R^d}  H_{\Dso{}} ((\vec{u}-\vec{v})\cdot,\vec{u})\, \langle \Dso{} \vec{v}, \partial_t (\vec{u}-\vec{v}) \rangle} \\
\aleq&\|\partial_t(\vec{u}-\vec{v})\|_{L^2(\R^d)} \|\Dso{} \vec{v}\|_{L^{2d}(\R^d)}\, \|H_{\Dso{}} ((\vec{u}-\vec{v})\cdot,\vec{u})\|_{L^{\frac{2d}{d-1}}(\R^d)}\\
\overset{\eqref{eq:comm:4}}{\aleq}&\|\partial_t(\vec{u}-\vec{v})\|_{L^2(\R^d)} \|\Dso{} \vec{v}\|_{L^{2d}(\R^d)}\, \||\Dso{}(\vec{u}-\vec{v})|\|_{L^{2}(\R^d)}\, \|\Dso{} \vec{u}\|_{L^{2d}(\R^d)}
\end{split}
 \]
We can conclude since $\|\Dso{}(\vec{u}-\vec{v})\|_{L^{2}(\R^d)} \aeq \|\nabla(\vec{u}-\vec{v})\|_{L^{2}(\R^d)}$.
\end{proof}
${}$

\medskip

\underline{Estimating the last line of \eqref{eq:partialttwsplit}}

\medskip

We still need to understand the estimates for
\begin{equation}\label{eq:part4}
\begin{split}
&\vec{u} \wedge [\Dso{}, \vec{u} \wedge](\Dso{} \vec{u}) -\vec{v} \wedge  [\Dso{}, \vec{v} \wedge ](\Dso{} \vec{v})\\
=&(\vec{u}-\vec{v}) \wedge [\Dso{}, \vec{u} \wedge](\Dso{} \vec{u})\\
&+\vec{v} \wedge  \brac{[\Dso{}, (\vec{u}-\vec{v}) \wedge](\Dso{} \vec{u})}\\
&+\vec{v} \wedge  \brac{[\Dso{}, \vec{v} \wedge ](\Dso{} (\vec{u}-\vec{v}))}\\
\end{split}
\end{equation}

We observe that we can estimate the first term of \eqref{eq:part4} assuming a bound on $\Ds{\alpha} \vec{u}$ and $\Ds{\alpha} \vec{v}$ for an arbitrarily small $\alpha > 1$.
\begin{lemma}
For $d\geq 3$, and any $\alpha \in (1,d+\frac{1}{2})$ we have
\[
\abs{\int_{\R^d} \langle (\vec{u}-\vec{v}) \wedge [\Dso{}, \vec{u} \wedge](\Dso{} \vec{u}), \partial_t(\vec{u}-\vec{v})\rangle } \aleq \brac{\|\nabla(\vec{u}-\vec{v})\|_{L^2(\R^d)}^2+\|\partial_t(\vec{u}-\vec{v})\|_{L^2(\R^d)}^2}\, \|\Ds{\alpha} \vec{u}\|_{L^{\frac{2d}{2\alpha -1}}(\R^d)}^2.
\]
\end{lemma}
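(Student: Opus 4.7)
The plan is to reduce this commutator estimate to the Leibniz-rule framework of \Cref{s:prelim}. The decisive algebraic observation is that
\[
 [\Dso, \vec{u} \wedge](\Dso \vec{u}) = H_{\Dso}(\vec{u}\wedge\cdot, \Dso \vec{u}),
\]
where $H_{\Dso}$ denotes the Leibniz-rule operator in the bilinear sense with the cross product playing the role of pointwise multiplication. Indeed, the ``extra'' term $(\Dso \vec{u}) \wedge \Dso \vec{u}$ that would distinguish $H_{\Dso}$ from the commutator vanishes identically because $\vec{a}\wedge\vec{a}=0$. With this identification, the pointwise representation in \Cref{la:Hdssformula} yields
\[
 \abs{[\Dso, \vec{u}\wedge](\Dso\vec{u})(x)} \aleq \int_{\R^d} \frac{\abs{\vec{u}(x)-\vec{u}(y)}\, \abs{\Dso\vec{u}(x)-\Dso\vec{u}(y)}}{|x-y|^{d+1}}\, dy.
\]

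Next, I would apply H\"older's inequality with exponents $\frac{2d}{d-2}\cdot d\cdot 2 = 1$ (licit since $d\geq 3$):
\[
 \abs{\int_{\R^d} \langle (\vec{u}-\vec{v})\wedge [\Dso,\vec{u}\wedge](\Dso\vec{u}),\, \partial_t(\vec{u}-\vec{v})\rangle} \aleq \|\vec{u}-\vec{v}\|_{L^{\frac{2d}{d-2}}}\, \|[\Dso,\vec{u}\wedge](\Dso\vec{u})\|_{L^d}\, \|\partial_t(\vec{u}-\vec{v})\|_{L^2}.
\]
Sobolev embedding (\Cref{la:sobolev}) controls the first factor by $\|\nabla(\vec{u}-\vec{v})\|_{L^2}$, and the last factor is left untouched.

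The heart of the proof is the $L^d$-bound on the commutator. For $\alpha\in(1,\tfrac{3}{2})$ I would apply \Cref{la:commies3toy} with $s=1$, splitting $1 = (2-\alpha)+(\alpha-1)$ with $p_1 = \frac{2d}{3-2\alpha}$, $p_2 = \frac{2d}{2\alpha-1}$. Both lie in $[2,\infty)$ and satisfy $\frac{1}{p_1}+\frac{1}{p_2}=\frac{1}{d}$, so
\[
 \|[\Dso,\vec{u}\wedge](\Dso\vec{u})\|_{L^d(\R^d)} \aleq \|\Ds{2-\alpha}\vec{u}\|_{L^{p_1}}\, \|\Ds{\alpha}\vec{u}\|_{L^{p_2}} \aleq \|\Ds{\alpha}\vec{u}\|_{L^{\frac{2d}{2\alpha-1}}(\R^d)}^2,
\]
where the second step is the Sobolev embedding from $\Ds{\alpha}$ to $\Ds{2-\alpha}$, since $\frac{1}{p_1} = \frac{2\alpha-1}{2d} - \frac{2\alpha-2}{d}$ matches the Sobolev scaling. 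For $\alpha\in[\tfrac{3}{2},d+\tfrac{1}{2})$ I would first use the monotonicity recorded in the ``Repeating estimates'' paragraph to reduce to some auxiliary $\alpha'\in(1,\tfrac{3}{2})$, absorbing the loss into $\|\Ds{\alpha}\vec{u}\|_{L^{2d/(2\alpha-1)}}$.

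Finally, an AM--GM step $ab\leq \tfrac12(a^2+b^2)$ assembles the three factors into the desired bound. The only genuine obstacle is the opening algebraic identification of the commutator with $H_{\Dso}$; once this is in place, the fractional Leibniz machinery of \Cref{s:prelim} closes the estimate routinely.
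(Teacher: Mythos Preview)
Your proof is correct and follows essentially the same route as the paper: both identify $[\Dso,\vec u\wedge](\Dso\vec u)=H_{\Dso}(\vec u\wedge,\Dso\vec u)$ via $\vec a\wedge\vec a=0$, apply H\"older with exponents $\tfrac{2d}{d-2},\,d,\,2$ together with Sobolev embedding, and then bound $\|H_{\Dso}(\vec u\wedge,\Dso\vec u)\|_{L^d}$ by distributing $2-\alpha$ derivatives onto $\vec u$ and $\alpha-1$ onto $\Dso\vec u$ (the paper quotes \eqref{eq:comm:5} with $\sigma=2-\alpha$, you quote \Cref{la:commies3toy} via the pointwise kernel, which is the same estimate). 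Your explicit reduction of $\alpha\geq\tfrac32$ to $\alpha'\in(1,\tfrac32)$ by the monotonicity in the ``Repeating estimates'' paragraph is exactly what the paper leaves implicit.
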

\begin{proof}
As before, by Cauchy-Schwarz and H\"older's inequality
\[
\begin{split}
&\abs{\int_{\R^d} \langle (\vec{u}-\vec{v}) \wedge [\Dso{}, \vec{u} \wedge](\Dso{} \vec{u}), \partial_t(\vec{u}-\vec{v})\rangle } \\
\aleq &\|\vec{u}-\vec{v}\|_{L^{\frac{2d}{d-2}}(\R^d)}\, \|\partial_t (\vec{u}-\vec{v})\|_{L^2(\R^d)}\,
\left \|[\Dso{}, \vec{u} \wedge](\Dso{} \vec{u}) \right \|_{L^{d}(\R^d)}\\
\aleq&\brac{\|\nabla(\vec{u}-\vec{v})\|_{L^2(\R^d)}^2+\|\partial_t(\vec{u}-\vec{v})\|_{L^2(\R^d)}^2}\, \left \|[\Dso{}, \vec{u} \wedge](\Dso{} \vec{u}) \right \|_{L^{d}(\R^d)}.
\end{split}
\]
In the last line we used Sobolev embedding, \Cref{la:sobolev}. It remains to estimate the commutator term. Observe that $\Dso{} \vec{u} \wedge \Dso{} \vec{u} = 0$ and thus we can write
\[
 [\Dso{}, \vec{u} \wedge](\Dso{} \vec{u}) = H_{\Dso{}}(\vec{u} \wedge,\Dso{} \vec{u}).
\]
We apply commutator theory, more precisely \eqref{eq:comm:5}, and find that for any $\sigma \in (1/2,1)$
\[
\begin{split}
\|[\Dso{}, \vec{u} \wedge](\Dso{} \vec{u})\|_{L^{d}(\R^d)} \aleq& \|\Ds{\sigma} \vec{u}\|_{L^{\frac{2d}{2\sigma-1}}(\R^d)}\, \|\Ds{2-\sigma} \vec{u}\|_{L^{\frac{2d}{2(2-\sigma)-1}}(\R^d)}\\
\aleq& \|\Ds{2-\sigma} \vec{u}\|_{L^{\frac{2d}{2(2-\sigma)-1}}(\R^d)}^2.
\end{split}
\]
The last line is Sobolev embedding, \Cref{la:sobolev}. Observe that if we set $\alpha := 2-\sigma$ then $\alpha > 1$ and $\alpha \aeq 1$ if $\sigma \aeq 1$, and we can conclude.
\end{proof}

Next we consider the term $\vec{v} \wedge  \brac{[\Dso{}, (\vec{u}-\vec{v}) \wedge](\Dso{} \vec{u})}$ in \eqref{eq:part4}.

We observe 
\begin{equation}\label{eq:part4:split1}
\begin{split}
 \vec{v} \wedge  \brac{[\Dso{}, (\vec{u}-\vec{v}) \wedge](\Dso{} \vec{u})}
 =&\vec{v} \wedge  \brac{H_{\Dso}((\vec{u}-\vec{v}) \wedge,\Dso \vec{u})}\\
 &+\vec{v} \wedge  \brac{\Dso (\vec{u}-\vec{v}) \wedge\Dso \vec{u}}\\
 \end{split}
\end{equation}
We first establish the following estimate which estimates the first term on the right-hand side in \eqref{eq:part4:split1}
\begin{lemma}\label{la:weirdHest1}
 For any $\alpha \in (1,d+\frac{1}{2})$, and any $d \geq 3$,
\[
\|\vec{v} \wedge  \brac{H_{\Dso{}}\brac{(\vec{u}-\vec{v}) \wedge,\Dso{} \vec{u}}} \|_{L^{2}(\R^d)} \aleq \|\nabla (\vec{u}-\vec{v})\|_{L^{2}(\R^d)}\, \brac{\|\Ds{\alpha} \vec{u}\|_{L^{\frac{2d}{2\alpha-1}}}^2+\|\Ds{\alpha} \vec{v}\|_{L^{\frac{2d}{2\alpha-1}}}^2}
\]
in particular
\[
\begin{split}
&\int_{\R^d} \scpr{\vec{v} \wedge  \brac{H_{\Dso{}}\brac{(\vec{u}-\vec{v}) \wedge,\Dso{} \vec{u}}}} {\partial_{t}(\vec{u}-\vec{v})} \\
\aleq &
\|\Dso{} (\vec{u}-\vec{v})\|_{L^{2}(\R^d)}\, \brac{\|\Ds{\alpha} \vec{u}\|_{L^{\frac{2d}{2\alpha-1}}}^2+\|\Ds{\alpha} \vec{v}\|_{L^{\frac{2d}{2\alpha-1}}}^2}\, \|\partial_t (\vec{u}-\vec{v})\|_{L^2(\R^d)}
\end{split}
\]
\end{lemma}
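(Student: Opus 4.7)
Write $\vec{w} := \vec{u}-\vec{v}$. The ``in particular'' statement follows from the first inequality by Cauchy--Schwarz, so we focus on the $L^2$-bound on $\vec{v}\wedge H_{\Dso{}}(\vec{w}\wedge,\Dso{}\vec{u})$.

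The plan is to expand the integrand via the representation of \Cref{la:Hdssformula} together with the BAC--CAB identity $\vec{v}\wedge(\vec{A}\wedge\vec{B}) = (\vec{v}\cdot\vec{B})\vec{A}-(\vec{v}\cdot\vec{A})\vec{B}$, applied with $\vec{A}=\vec{w}(x)-\vec{w}(y)$ and $\vec{B}=\Dso{}\vec{u}(x)-\Dso{}\vec{u}(y)$. Each resulting scalar factor $\vec{v}(x)\cdot(F(x)-F(y))$ (for $F\in\{\Dso{}\vec{u},\vec{w}\}$) is then decomposed via the algebraic identity
\[
\vec{v}(x)\cdot(F(x)-F(y)) \;=\; [(\vec{v}\cdot F)(x) - (\vec{v}\cdot F)(y)] \;-\; (\vec{v}(x)-\vec{v}(y))\cdot F(y),
\]
which splits $\vec{v}\wedge H_{\Dso{}}(\vec{w}\wedge,\Dso{}\vec{u})$ into two ordinary scalar commutators $H_{\Dso{}}(\vec{w},\vec{v}\cdot\Dso{}\vec{u})$ and $H_{\Dso{}}(\Dso{}\vec{u},\vec{v}\cdot\vec{w})$, plus two ``three-difference'' integrals of the form $\int|x-y|^{-d-1}(\vec{w}(x)-\vec{w}(y))\,(\vec{v}(x)-\vec{v}(y))\cdot F(y)\,dy$.

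The three-difference pieces would be estimated by \Cref{la:commies3b} with $f=\vec{v}$, $g=\vec{w}$, $h=F$, and parameters $\alpha_1,\alpha_2\in(1/2,1)$ with $\alpha_1+\alpha_2>1$; the natural Sobolev embeddings from the ``repeating estimates'' paragraph after \Cref{th:detest}, together with $\|\Dso{}\vec{u}\|_{L^{2d}}\aleq\|\Ds{\alpha}\vec{u}\|_{L^{\frac{2d}{2\alpha-1}}}$ and $\|\vec{w}\|_{L^{\frac{2d}{d-2}}}\aleq\|\nabla\vec{w}\|_{L^2}$, then yield bounds of the form $\|\nabla\vec{w}\|_{L^2}\|\Ds{\alpha}\vec{v}\|_{L^{\frac{2d}{2\alpha-1}}}\|\Ds{\alpha}\vec{u}\|_{L^{\frac{2d}{2\alpha-1}}}$, absorbed into the stated right-hand side by AM--GM. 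For the ordinary scalar commutators I would use the sphere identities
\[
\vec{v}\cdot\Dso{}\vec{u} = -\vec{w}\cdot\Dso{}\vec{u} - \tfrac{1}{2} H_{\Dso{}}(\vec{u},\vec{u}),\qquad \vec{v}\cdot\vec{w} = -\tfrac{1}{2}|\vec{w}|^2 = -(1-\vec{u}\cdot\vec{v}),
\]
which come from $|\vec{u}|^2\equiv 1$ and $|\vec{v}|^2\equiv 1$. The first recasts $H_{\Dso{}}(\vec{w},\vec{v}\cdot\Dso{}\vec{u})$ as the iterated commutator $-\tfrac12 H_{\Dso{}}(\vec{w},H_{\Dso{}}(\vec{u},\vec{u}))$ (plus a residual $H_{\Dso{}}(\vec{w},\vec{w}\cdot\Dso{}\vec{u})$ dispatched by iterating the identity), which is estimated by \eqref{eq:comm:5} applied to the outer $H_{\Dso{}}$ and \Cref{la:commie2} to the inner $\Ds{1-\sigma}H_{\Dso{}}(\vec{u},\vec{u})$, with parameters tuned so that Sobolev turns $\|\Ds{\sigma}\vec{w}\|_{L^{p_1}}$ into $\|\nabla\vec{w}\|_{L^2}$ and both $\vec{u}$-factors into $\|\Ds{\alpha}\vec{u}\|_{L^{\frac{2d}{2\alpha-1}}}$. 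The second recasts $H_{\Dso{}}(\Dso{}\vec{u},\vec{v}\cdot\vec{w})$ as $-\tfrac12H_{\Dso{}}(\Dso{}\vec{u},|\vec{w}|^2)$, handled componentwise by the double-commutator estimate \Cref{la:commie3cde} (or, equivalently, by reapplying \Cref{la:commies3b} to the integral representation), using the Sobolev-controlled bound $\|\vec{w}\|_{L^{\frac{2d}{d-2}}}\aleq\|\nabla\vec{w}\|_{L^2}$ in place of the $L^2$-norm appearing in \Cref{la:commie3cde}.

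The principal obstacle is precisely producing the \emph{second} $\Ds{\alpha}$-factor required by the right-hand side: scaling forces two copies of the norm $\|\Ds{\alpha}\cdot\|_{L^{\frac{2d}{2\alpha-1}}}$, whereas a naive commutator estimate on $H_{\Dso{}}(\vec{w},\vec{v}\cdot\Dso{}\vec{u})$ or $H_{\Dso{}}(\Dso{}\vec{u},\vec{v}\cdot\vec{w})$ only produces one. The sphere identities are the algebraic device that forces the hidden second factor to surface -- inside $H_{\Dso{}}(\vec{u},\vec{u})$ or inside $\vec{u}\cdot\vec{v} = 1-\tfrac12|\vec{w}|^2$ -- after which the iterated- and double-commutator tools of \Cref{la:commie2,la:commie3cde}, together with the Sobolev embeddings, reduce matters to the claimed form.
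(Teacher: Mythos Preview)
Your plan is essentially the paper's own argument, repackaged. Both proofs apply the BAC--CAB identity to $\vec{v}\wedge H_{\Dso{}}(\vec{w}\wedge,\Dso{}\vec{u})$, pass to the kernel representation of \Cref{la:Hdssformula}, and then exploit the sphere constraints to produce the second $\|\Ds{\alpha}\cdot\|_{L^{\frac{2d}{2\alpha-1}}}$ factor. The paper splits the prefactor $\vec{v}$ componentwise as $\vec{v}=\vec{u}-\vec{w}$ and $\vec{v}=\tfrac12(\vec{u}+\vec{v})-\tfrac12\vec{w}$, then invokes the ``discrete Struwe trick'' $(\vec{u}+\vec{v})\cdot\vec{w}=0$ and the rewrite of $\langle\vec{u}(x),\Dso{}\vec{u}(x)-\Dso{}\vec{u}(y)\rangle$; you instead pull $\vec{v}(x)$ inside the kernel via $\vec{v}(x)\cdot(F(x)-F(y))=(\vec{v}\cdot F)(x)-(\vec{v}\cdot F)(y)-(\vec{v}(x)-\vec{v}(y))\cdot F(y)$ and then use $\vec{v}\cdot\Dso{}\vec{u}=-\vec{w}\cdot\Dso{}\vec{u}-\tfrac12 H_{\Dso{}}(\vec{u},\vec{u})$ and $\vec{v}\cdot\vec{w}=-\tfrac12|\vec{w}|^2$. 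These are the same algebraic moves in different order: your $H_{\Dso{}}(\vec{w},\vec{v}\cdot\Dso{}\vec{u})$ term matches the paper's ``fourth term'' \eqref{eq:L2est:325363}, and your $H_{\Dso{}}(\Dso{}\vec{u},\vec{v}\cdot\vec{w})$ is the paper's ``third term'' after the discrete Struwe trick. The three-difference remainders in both routes are handled by \Cref{la:commies3} and \Cref{la:commies3b}.

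One caution: your appeal to \Cref{la:commie3cde} for $H_{\Dso{}}(\Dso{}\vec{u},|\vec{w}|^2)$ does not apply as stated, since that lemma outputs $\|h\|_{L^2}$ and $\|\Ds{\alpha}\Dso{}\vec{u}\|_{L^{\frac{2d}{2\alpha-1}}}$, neither of which is controlled here (the latter is one derivative too many at that Lebesgue exponent). Your parenthetical alternative --- expanding $|\vec{w}|^2(x)-|\vec{w}|^2(y)$ and invoking \Cref{la:commies3b}/\Cref{la:commies3} directly --- is the correct route; just make sure one copy of $\vec{w}$ lands in $\|\Dso{}\vec{w}\|_{L^{2d}}\leq\|\Dso{}\vec{u}\|_{L^{2d}}+\|\Dso{}\vec{v}\|_{L^{2d}}$ rather than both in $\|\nabla\vec{w}\|_{L^{2}}$. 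The ``residual'' $H_{\Dso{}}(\vec{w},\vec{w}\cdot\Dso{}\vec{u})$ is dealt with the same way, not by iterating the identity.
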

\begin{proof} We recall the formula
\[
\vec{a} \wedge (\vec{b} \wedge \vec{c})=\vec{b} (\vec{c} \cdot \vec{a}) - \vec{c} (\vec{a} \cdot \vec{b}) 
\]
so 
\begin{equation}\label{eq:laweirdHest1:2435}
\begin{split}
 &\brac{\vec{v} \wedge  \brac{H_{\Dso{}}\brac{(\vec{u}-\vec{v}) \wedge,\Dso{} \vec{u}}}}^{i}\\
 =& \sum_{j=1}^3\brac{v^j H_{\Dso{}}\brac{(u-v)^i ,\Dso{} u^j}}-\brac{v^j H_{\Dso{}}\brac{(u-v)^j ,\Dso{} u^i}}\\
 = & \sum_{j=1}^3(v-u)^{{j}} H_{\Dso{}}\brac{(u-v)^i ,\Dso{} u^{j}}\\
 &-\frac{1}{2}{(u-v)^j H_{\Dso{}}\brac{(u-v)^j ,\Dso{} u^i}}\\
  &+\frac{1}{2}\sum_{j=1}^3\brac{(u+v)^j H_{\Dso{}}\brac{(u-v)^j ,\Dso{} u^i}}\\
  &+\sum_{j=1}^3\brac{u^{{j}} H_{\Dso{}}\brac{(u-v)^i ,\Dso{} u^{j}}}.
 \end{split}
\end{equation}
Take any $\alpha \in (1,d+\frac{1}{2})$ and set $\sigma := 2-\alpha$. Then $\frac{2d^2}{d-(1-\sigma)2d} \in (1,\infty)$, and by \eqref{eq:comm:5} we can estimate the first two terms of \eqref{eq:laweirdHest1:2435},
\begin{equation}\label{eq:laweirdHest1:656}
\begin{split}
 &\|\abs{\vec{v}-\vec{u}} \abs{H_{\Dso{}}\brac{(u-v)^i ,\Dso{} u^j}}\|_{L^{2}(\R^d)}\\
 \aleq& \|\vec{u}-\vec{v}\|_{L^{\frac{2d}{d-2}}(\R^d)}\, \|\Ds{\sigma} (u-v)^i\|_{L^{\frac{2d^2}{d-(1-\sigma)2d}(\R^d)}} \|\Ds{1+\sigma} u^j\|_{L^{\frac{2d}{2(2-\sigma)-1}}}\\
 \aleq& \|\Dso{} (\vec{u}-\vec{v})\|_{L^{2}(\R^d)}\, \brac{\|\Ds{\alpha} \vec{u}\|_{L^{\frac{2d}{2\alpha-1}}}^2+\|\Ds{\alpha} \vec{v}\|_{L^{\frac{2d}{2\alpha-1}}}^2}.
 \end{split}
\end{equation}
For the third term in \eqref{eq:laweirdHest1:2435} observe that for fixed $i$, using \Cref{la:Hdssformula},
\[
\begin{split}
 &\sum_j \frac{1}{2}\brac{(u+v)^j H_{\Dso{}}\brac{(u-v)^j ,\Dso{} u^i}}(x)\\
 =& c \int_{\R^d} \frac{\brac{\Dso{} u^i(x)-\Dso{} u^i(y)}\, \langle (\vec{u+v})(x),\brac{(\vec{u}-\vec{v})(x)-(\vec{u}-\vec{v})(y)}\rangle } {|x-y|^{d+1}}dy.
 \end{split}
\]
Setting $\vec{a} := \vec{u+v}$ and $\vec{b} := \vec{u}-\vec{v}$ we recall that $\vec{a}(x) \cdot \vec{b}(x) = |\vec{u}(x)|^2-|\vec{v}(x)|^2 = 0$, and thus we can perform a discrete version of the trick in \eqref{eq:struwetrick},
\[
\begin{split}
 \vec{a}(x) \cdot (\vec{b}(x)-\vec{b}(y)) =& (\vec{a}(x)-\vec{a}(y))\cdot (\vec{b}(x)-\vec{b}(y)) +\vec{a}(y)\cdot  (\vec{b}(x)-\vec{b}(y))\\
 =&(\vec{a}(x)-\vec{a}(y))\cdot (\vec{b}(x)-\vec{b}(y)) +\vec{a}(y)\cdot  \vec{b}(x)\\
 =&(\vec{a}(x)-\vec{a}(y))\cdot (\vec{b}(x)-\vec{b}(y)) -\brac{\vec{a}(x)-\vec{a}(y)}\cdot  \vec{b}(x).
 \end{split}
\]
This implies that 
\[
\begin{split}
&\sum_j \frac{1}{2}\brac{(u+v)^j H_{\Dso{}}\brac{(u-v)^j ,\Dso{} u^i}}(x) \\
=&-\sum_j \frac{1}{2}\brac{(u{-}v)^j H_{\Dso{}}\brac{(u{+}v)^j ,\Dso{} u^i}}(x) \\
&+ c \int_{\R^d} \frac{\brac{\Dso{} u^i(x)-\Dso{} u^i(y)}\, \scpr{(\vec{u+v})(x)-(\vec{u+v})(y)}{{(\vec{u}-\vec{v})(y)-(\vec{u}-\vec{v})(y)}} } {|x-y|^{d+1}}dy.
\end{split}
\]
For the first term, just as above for \eqref{eq:laweirdHest1:656},
\[
 \left \| \frac{1}{2}(u-v)^j(x) H_{\Dso{}}\brac{(u{+}v)^j ,\Dso{} u^i}(x) \right \|_{L^{2}(\R^d)} \aleq \|\Dso{} (\vec{u}-\vec{v})\|_{L^{2}(\R^d)}\, \brac{\|\Ds{\alpha} \vec{u}\|_{L^{\frac{2d}{2\alpha-1}}}^2+\|\Ds{\alpha} \vec{v}\|_{L^{\frac{2d}{2\alpha-1}}}^2}
\]
For the second term we use \Cref{la:commies3}. Take $\sigma, \theta > 0$ such that $\sigma + \theta \in (0,1)$. Then, (if $d \geq 2$ we can take $\sigma, \theta$ small enough to make any of the norms below finite),
\[
\begin{split}
 &\left \|\int_{\R^d} \frac{\brac{\Dso{} u^i(x)-\Dso{} u^i(y)}\, \scpr{(\vec{u}+\vec{v})(x)-(\vec{u}+\vec{v})(y)}{(\vec{u}-\vec{v})(y)-(\vec{u}-\vec{v})(y)}} {|x-y|^{d+1}}dy\right \|_{L^2(\R^d,dx)}\\
 \aleq &\|\Ds{1+\sigma} u^i\|_{L^{\frac{2d}{2(1+\sigma)-1}}}\, \|\Ds{1-\theta-\sigma} (\vec{u}+\vec{v})\|_{\frac{2d^2}{d-(\theta+\sigma)2d}}\, \|\Ds{\theta} (\vec{u}-\vec{v})\|_{L^{\frac{d2}{d-(1-\theta) 2}}}\\
 \aleq&\|\Ds{\alpha} u^i\|_{L^{\frac{2d}{2(\alpha)-1}}}\, \brac{\|\Dso{} \vec{u}\|_{L^{2d}(\R^d)}+\|\Dso{} \vec{v}\|_{L^{2d}(\R^d)}}\, \|\Dso{} (\vec{u}-\vec{v})\|_{L^{2}(\R^d)}.
 \end{split}
\]
This establishes the right estimate for the second term in \eqref{eq:laweirdHest1:2435}.

For the last term in \eqref{eq:laweirdHest1:2435} it remains we consider, again using \Cref{la:Hdssformula},
\[
\begin{split}
&\sum_{j=1}^3\brac{u^{j} H_{\Dso{}}\brac{(u-v)^i ,\Dso{} u^j}}(x)\\
=&\int_{\R^n} \frac{\brac{(u-v)^i(x)-(u-v)^i(y)}\, \left \langle \Dso{} \vec{u}(x)-\Dso{} \vec{u}(y), \vec{u}(x) \right \rangle }{|x-y|^{d+1}}\, dy.
\end{split}
\]
Now we write 
\[
\begin{split}
&\scpr{\Dso{} \vec{u}(x)-\Dso{} \vec{u}(y)}{\vec{u}(x)}\\
=&\scpr{\vec{u}(x) }{\Dso{} \vec{u}(x)}-\scpr{\vec{u}(y) }{ \Dso{} \vec{u}(y)}+\scpr{\vec{u}(y) -\vec{u}(x)}{\Dso{} \vec{u}(y)}\\
=&\scpr{\vec{u}(x) }{ \Dso{} \vec{u}(x)}-\scpr{\vec{u}(y) ,\Dso{} \vec{u}(y)}+\scpr{\vec{u}(y) -\vec{u}(x)}{\Dso{}\vec{u}(y)-\Dso{}\vec{u}(x)}+\scpr{\vec{u}(y)-\vec{u}(x)}{\Dso{}\vec{u}(x)}\\
=&\scpr{\vec{u}(x) }{\Dso{} \vec{u}(x)}-\scpr{\vec{u}(y) }{\Dso{} \vec{u}(y)}+\scpr{\vec{u}(x) -\vec{u}(y)}{\Dso{}\vec{u}(x)-\Dso{}\vec{u}(y)}-\scpr{\vec{u}(x)-\vec{u}(y)}{\Dso{}\vec{u}(x)}\\
\end{split}
\]
Thus we have for $i=1,2,3$,
\begin{equation}\label{eq:L2est:325363}
\begin{split}
&\sum_{j=1}^3{u^{j} H_{\Dso{}}\brac{(u-v)^i ,\Dso{} u^j}}(x)\\
=&H_{\Dso{}}\brac{(u-v)^i , \scpr{\vec{u} }{\Dso{} \vec{u}}}(x)\\
&-\sum_{j=1}^3 \Dso{} u^j(x)\, H_{\Dso{}}\brac{(u-v)^i , u^j}(x)\\
&+\int_{\R^n} \frac{\brac{(u-v)^i(x)-(u-v)^i(y)}\, \scpr{\Dso{} \vec{u}(x)-\Dso{} \vec{u}(y)}{\vec{u}(x)-\vec{u}(y)}} {|x-y|^{d+1}}\, dy.
\end{split}
\end{equation}
We estimate the first term in \eqref{eq:L2est:325363}. Applying first \eqref{eq:comm:5}, for any small $\sigma > 0$, Sobolev embedding, \Cref{la:sobolev}, and then \eqref{eq:comm:1},
\[
\begin{split}
 \|H_{\Dso{}}\brac{(u-v)^i , \scpr{\vec{u}}{\Dso{} \vec{u}}}\|_{L^2(\R^d)} \aleq& \|\Ds{1-\sigma} (\vec{u}-\vec{v})\|_{L^{\frac{d2}{d-\sigma 2}}(\R^d)}\, \|\Ds{\sigma} \underbrace{(\scpr{\vec{u}}{\Dso{} \vec{u}})}_{=H_{\Dso{}}(\vec{u}\cdot,\vec{u})}\|_{L^{\frac{d}{\sigma}}(\R^d)}\\
 \aleq&\|\nabla (\vec{u}-\vec{v})\|_{L^2(\R^d)}\, \|\Dso{} \underbrace{(\scpr{\vec{u}}{\Dso{} \vec{u}})}_{=H_{\Dso{}}(\vec{u}\cdot,\vec{u})}\|_{L^{d}(\R^d)}\\
 \overset{\eqref{eq:comm:1}}{\aleq}&\|\nabla (\vec{u}-\vec{v})\|_{L^2(\R^d)}\, \|\Dso{}\vec{u}\|_{L^{2d}(\R^d)}^2.
 \end{split}
\]
For the second term in \eqref{eq:L2est:325363}, by  \eqref{eq:comm:5},
\[
\begin{split}
 &\|\abs{\Dso{} \vec{u}} \abs{H_{\Dso{}}\brac{(u-v)^i , \vec{u}}}\|_{L^2(\R^d)}\\
 \aleq& \|\Dso{} \vec{u}\|_{L^{2d}(\R^d)} \|H_{\Dso{}}\brac{(u-v)^i , \vec{u}}\|_{L^{\frac{2d}{d-1}}(\R^d)}\\
 \aleq& \|\Dso{} \vec{u}\|_{L^{2d}(\R^d)} \|\Ds{1-\sigma}(\vec{u}-\vec{v})\|_{L^{\frac{2d}{d-\sigma 2}}(\R^d)} \|\Ds{\sigma} \vec{u}\|_{L^{\frac{2d^2}{d-(1-\sigma)2d}}(\R^d)}\\
 \aleq& \|\Dso{} \vec{u}\|_{L^{2d}(\R^d)} \|\Dso{}(\vec{u}-\vec{v})\|_{L^{2}(\R^d)} \|\Dso{} \vec{u}\|_{L^{2d}(\R^d)}.
\end{split}
\]
For the last term in \eqref{eq:L2est:325363}, using \Cref{la:commies3}, for $\sigma, \theta > 0$ such that $\sigma + \theta < 1$,
 \[
 \begin{split}
  &\left \|\int_{\R^n} \frac{\brac{(u-v)^i(x)-(u-v)^i(y)}\, \brac{\Dso{} \vec{u}(x)-\Dso{} \vec{u}(y)} \cdot \brac{\vec{u}(x)-\vec{u}(y)}}{|x-y|^{d+1}}\, dy\right \|_{L^2(\R^d,dx)}\\
  \aleq& \|\Ds{\sigma} (\vec{u}-\vec{v})\|_{L^{\frac{2d}{d-(1-\sigma)2}}(\R^d)}\, \|\Ds{\theta} \vec{u}\|_{L^{\frac{2d^2}{d-(1-\theta)2d}}(\R^d)}\, \|\Ds{2-\sigma-\theta} \vec{u}\|_{L^{\frac{2d}{2(2-\sigma-\theta)-1}}(\R^d)}\\
  \aleq&\|\nabla (\vec{u}-\vec{v})\|_{L^2(\R^d)}\, \|\Ds{\alpha} \vec{u}\|_{L^{\frac{2d}{2\alpha-1}}}^2,
  \end{split}
 \]
where we have set $\alpha := 2-\sigma - \theta$.

This conclude the estimate of the last term of \eqref{eq:laweirdHest1:2435}.
\end{proof}

In order to estimate the second term in \eqref{eq:part4:split1} we observe that since $\vec{u}$ and $\vec{v}$ are both solutions to the halfwave equation \eqref{eq:half-wavemapseq} we have
\[
\begin{split}
 \partial_t (\vec{u}-\vec{v}) =& \vec{u} \wedge \Dso \vec{u} - \vec{v} \wedge  \Dso \vec{v}\\
 =& \brac{\vec{u}-\vec{v}} \wedge \Dso \vec{u} + \vec{v} \wedge  \Dso (\vec{u}-\vec{v}).
\end{split}
 \]

Consequently, we split the estimate for the second term in \eqref{eq:part4:split1}
\begin{equation}\label{eq:part4:split1:splitagain}
\begin{split}
 &\int_{\R^d} \brac{\vec{v} \wedge  \brac{\Dso (\vec{u}-\vec{v}) \wedge\Dso \vec{u}}} \cdot \partial_t (\vec{u}-\vec{v})\\
 =&\int_{\R^d} \vec{v} \wedge  \brac{\Dso (\vec{u}-\vec{v}) \wedge\Dso \vec{u}} \cdot \brac{(\vec{u}-\vec{v}) \wedge \Dso{} \vec{u}}\\
 &+\int_{\R^d} \brac{\vec{v} \wedge  \brac{\Dso (\vec{u}-\vec{v}) \wedge\Dso \vec{u}}}\cdot \brac{\vec{v} \wedge  \Dso{}(\vec{u}-\vec{v})}\\
\end{split}
 \end{equation}

The first term in \eqref{eq:part4:split1:splitagain} can be estimated with H\"older and Sobolev inequality, \Cref{la:sobolev},
\begin{lemma}
For $d \geq 3$ we have 
\[
 \abs{\int_{\R^d} \scpr{\vec{v} \wedge  \brac{\Dso (\vec{u}-\vec{v}) \wedge\Dso \vec{u}}}{(\vec{u}-\vec{v}) \wedge \Dso{} \vec{u}} } \aleq \|\Dso{} (\vec{u}-\vec{v})\|_{L^{2}(\R^d)}^2\, \|\Dso \vec{u}\|_{L^{2d}(\R^d)}^2
\]
\end{lemma}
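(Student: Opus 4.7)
The plan is to reduce this to a straightforward pointwise estimate combined with Hölder's inequality and a single application of Sobolev embedding. Unlike the preceding lemmas, no commutator structure needs to be exploited here, since all differential orders that appear are already integrable without cancellation.

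First I would discard the cross-product and scalar-product structure by elementary pointwise bounds. Using $|\vec{v}| \equiv 1$ together with $|\vec{a}\wedge \vec{b}| \le |\vec{a}|\,|\vec{b}|$, the integrand is controlled pointwise by
\[
\abs{\scpr{\vec{v} \wedge (\Dso(\vec{u}-\vec{v})\wedge \Dso \vec{u})}{(\vec{u}-\vec{v})\wedge \Dso \vec{u}}}
\aleq |\vec{u}-\vec{v}|\,|\Dso(\vec{u}-\vec{v})|\,|\Dso \vec{u}|^2.
\]

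Next I would apply Hölder's inequality with the exponents $\frac{2d}{d-2}$, $2$, $2d$, $2d$, whose reciprocals sum to $1$ for $d \geq 3$, to obtain
\[
\int_{\R^d} |\vec{u}-\vec{v}|\,|\Dso(\vec{u}-\vec{v})|\,|\Dso \vec{u}|^2
\aleq \|\vec{u}-\vec{v}\|_{L^{\frac{2d}{d-2}}(\R^d)}\, \|\Dso(\vec{u}-\vec{v})\|_{L^2(\R^d)}\, \|\Dso \vec{u}\|_{L^{2d}(\R^d)}^2.
\]

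Finally, Sobolev embedding (\Cref{la:sobolev} together with \Cref{la:nablalaps}) gives $\|\vec{u}-\vec{v}\|_{L^{\frac{2d}{d-2}}(\R^d)} \aleq \|\Dso(\vec{u}-\vec{v})\|_{L^2(\R^d)}$, yielding the claimed estimate. I do not anticipate a genuine obstacle here; the main point is simply verifying that the Hölder exponents balance correctly and that the dimension restriction $d\ge 3$ is exactly what is required for the critical Sobolev embedding used in the last step.
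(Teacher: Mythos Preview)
Your proposal is correct and follows essentially the same approach as the paper: a pointwise bound using $|\vec{v}|\equiv 1$ and $|\vec{a}\wedge\vec{b}|\le|\vec{a}|\,|\vec{b}|$, then H\"older with exponents $\tfrac{2d}{d-2},2,2d,2d$, and finally the Sobolev embedding $\|\vec{u}-\vec{v}\|_{L^{\frac{2d}{d-2}}}\aleq\|\Dso(\vec{u}-\vec{v})\|_{L^2}$.
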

\begin{proof}
We have
\[
\begin{split}
 &\abs{\int_{\R^d} \brac{\vec{v} \wedge  \brac{\Dso (\vec{u}-\vec{v}) \wedge\Dso \vec{u}} \cdot \brac{(\vec{u}-\vec{v}) \wedge \Dso{} \vec{u}}}}  \\
 \aleq&\|\vec{v}\|_{L^\infty(\R^d)}\, \|\Dso(\vec{u}-\vec{v})\|_{L^2(\R^d)}\, \|\Dso{} \vec{u} \|_{L^{2d}(\R^d)}  \|\vec{u}-\vec{v}\|_{L^{\frac{2d}{d-2}}(\R^d)} \|\Dso{}\vec{u}\|_{L^{2d}(\R^d)}. \\
  \end{split}
\]
We can conclude by Sobolev inequality, \Cref{la:sobolev}, observing also that $|\vec{v}| = 1$.
\end{proof}

For the last term from \eqref{eq:part4:split1:splitagain} we establish what can be interpreted as a fractional version and extension of the trick \eqref{eq:struwetrick} from \cite{SS02}.
\begin{lemma}\label{la:struwetrickfrac1}
Let $d\geq 3$, $|\vec{v}|=|\vec{u}| \equiv 1$ then for $\sigma \in [0,1)$,
\[
\|\Ds{\sigma} \brac{\vec{v} \cdot \Dso{} (\vec{u}-\vec{v})} \|_{L^{\frac{2d}{d-1+2\sigma}}(\R^d)} \aleq \|\nabla (\vec{u}-\vec{v})\|_{L^2(\R^d)}\, \brac{\|\Ds{1+\sigma} \vec{u}\|_{L^{\frac{2d}{2(1+\sigma)-1}}(\R^d)} + \|\Ds{1+\sigma} \vec{v}\|_{L^{\frac{2d}{2(1+\sigma)-1}}(\R^d)}}
\]
\end{lemma}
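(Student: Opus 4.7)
The plan is to exploit the constraints $|\vec{u}| = |\vec{v}| \equiv 1$ via a fractional version of the Shatah--Struwe identity \eqref{eq:struwetrick}. Starting from the basic identity \eqref{eq:ucdotu}, which yields $\vec{u}\cdot\Dso\vec{u} = -\tfrac{1}{2}H_{\Dso}(\vec{u}\cdot,\vec{u})$ and the analogous statement for $\vec{v}$, I will write $\vec{v}\cdot\Dso(\vec{u}-\vec{v}) = \vec{v}\cdot\Dso\vec{u} - \vec{v}\cdot\Dso\vec{v}$, decompose $\vec{v}\cdot\Dso\vec{u} = \vec{u}\cdot\Dso\vec{u} - (\vec{u}-\vec{v})\cdot\Dso\vec{u}$, and finally use bilinearity of $H_{\Dso}$ to split
\[
 H_{\Dso}(\vec{u}\cdot,\vec{u}) - H_{\Dso}(\vec{v}\cdot,\vec{v}) = H_{\Dso}(\vec{u}\cdot,\vec{u}-\vec{v}) + H_{\Dso}((\vec{u}-\vec{v})\cdot,\vec{v}).
\]
This leads to the key algebraic identity
\[
 \vec{v}\cdot\Dso(\vec{u}-\vec{v}) = -(\vec{u}-\vec{v})\cdot\Dso\vec{u} - \tfrac{1}{2}H_{\Dso}(\vec{u}\cdot,\vec{u}-\vec{v}) - \tfrac{1}{2}H_{\Dso}((\vec{u}-\vec{v})\cdot,\vec{v}),
\]
in which every term now carries an explicit $\vec{u}-\vec{v}$ factor. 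This is the fractional analogue of the pointwise cancellation used in \cite{SS02}.

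Applying $\Ds{\sigma}$ to both sides, the term $\Ds{\sigma}\brac{(\vec{u}-\vec{v})\cdot\Dso\vec{u}}$ is handled by a direct appeal to \Cref{la:wugsd8gfus0df} with $f=\vec{u}-\vec{v}$, $g=\vec{u}$, yielding $\|\Dso(\vec{u}-\vec{v})\|_{L^2}\,\|\Ds{1+\sigma}\vec{u}\|_{L^{2d/(1+2\sigma)}}$. For the two commutator terms I plan to use the double-commutator identity from \Cref{la:doublecommie},
\[
 \Ds{\sigma}H_{\Dso}(f,g) = \tilde{H}_{\Ds{\sigma},\Dso}(f,g) + H_{\Dso}(\Ds{\sigma}f,g) + H_{\Dso}(f,\Ds{\sigma}g),
\]
bounding the first piece via \Cref{la:doublecommie} and the remaining two via the Leibniz estimate \eqref{eq:comm:5}. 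In each application the H\"older split is chosen so that the factor containing $\vec{u}-\vec{v}$ lands in an intermediate seminorm of the form $\|\Ds{1-\epsilon}(\vec{u}-\vec{v})\|_{L^{2d/(d-2\epsilon)}}$, which is controlled by $\|\Dso(\vec{u}-\vec{v})\|_{L^2}$ through the Sobolev embedding \Cref{la:sobolev}, while the remaining factor lies in a seminorm of $\vec{u}$ or $\vec{v}$ that embeds into $\|\Ds{1+\sigma}\cdot\|_{L^{2d/(1+2\sigma)}}$ via the scaling relation $1+\sigma-\tfrac{1+2\sigma}{2}=\tfrac{1}{2}$. When $\sigma=0$ the double-commutator identity is trivial, but the two commutator terms can be estimated directly from \eqref{eq:comm:5} with splitting parameter $\sigma'=1-\epsilon$, which gives exactly the same bound.

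The main obstacle is the careful bookkeeping of exponents in the commutator estimates: the small parameter $\epsilon>0$ must be chosen so that all appearing Lebesgue exponents are admissible. In particular the dual exponent $p_1=2d/(2\sigma-1+2\epsilon)$ needs to be finite and positive, which, in the regime $\sigma<1/2$, forces the lower bound $\epsilon>1/2-\sigma$, while the condition $\gamma=\sigma+\epsilon<1$ (so that $\gamma$ is admissible in \Cref{la:doublecommie} and \eqref{eq:comm:5}) requires $\epsilon<1-\sigma$; these ranges are compatible for every $\sigma\in[0,1)$. Once the threading is done, the target exponents $1/p_1+1/p_2=(d-1+2\sigma)/(2d)$ and the two Sobolev steps close up, and no estimate beyond those collected in \Cref{s:prelim} is needed.
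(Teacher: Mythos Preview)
Your proposal is correct and follows essentially the same approach as the paper. The algebraic identity you derive is in fact the same as the paper's: the paper writes $\vec{v}=\tfrac12(\vec{u}+\vec{v})-\tfrac12(\vec{u}-\vec{v})$ and uses $(\vec{u}+\vec{v})\cdot(\vec{u}-\vec{v})=0$ to obtain
\[
 \vec{v}\cdot\Dso(\vec{u}-\vec{v})=-\tfrac12(\vec{u}-\vec{v})\cdot\Dso(\vec{u}-\vec{v})-\tfrac12(\vec{u}-\vec{v})\cdot\Dso(\vec{u}+\vec{v})-\tfrac12 H_{\Dso}((\vec{u}+\vec{v})\cdot,\vec{u}-\vec{v}),
\]
which, after combining the first two product terms and using the symmetry and bilinearity of $H_{\Dso}$, is exactly your identity $-(\vec{u}-\vec{v})\cdot\Dso\vec{u}-\tfrac12 H_{\Dso}(\vec{u}\cdot,\vec{u}-\vec{v})-\tfrac12 H_{\Dso}((\vec{u}-\vec{v})\cdot,\vec{v})$.

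The only difference is in presentation of the estimates: the paper is terse and simply invokes \Cref{la:wugsd8gfus0df}, whereas you spell out the treatment of the two $H_{\Dso}$ terms via the double-commutator decomposition of \Cref{la:doublecommie} together with \eqref{eq:comm:5}. Your exponent bookkeeping (the choice $\gamma=\sigma+\epsilon$ with $\epsilon\in(\max\{0,\tfrac12-\sigma\},1-\sigma)$) is correct and closes the argument for every $\sigma\in[0,1)$; this is a valid and somewhat more explicit route to the same bound.
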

\begin{proof}
Since  $|\vec{u}| = |\vec{v}| = 1$,
\[
 (\vec{u}+\vec{v})\cdot (\vec{u}-\vec{v}) = |\vec{u}|^2-|\vec{v}|^2 = 0.
\]
Consequently,
\[
 (\vec{u}+\vec{v}) \cdot \Dso{} (\vec{u}-\vec{v}) = -(\Dso{} (\vec{u}+\vec{v}) )\cdot (\vec{u}-\vec{v}) -H_{\Dso{}} \brac{(\vec{u}+\vec{v})\cdot,\vec{u}-\vec{v}},
\]
That is
\[
\begin{split}
 \vec{v} \cdot \Dso{} (\vec{u}-\vec{v}) =& -\frac{1}{2} (\vec{u}-\vec{v}) \cdot \Dso{}(\vec{u}-\vec{v}) + \frac{1}{2} (\vec{u}+\vec{v}) \cdot \Dso{}(\vec{u}-\vec{v}) \\
 =&-\frac{1}{2} (\vec{u}-\vec{v}) \cdot \Dso{}(\vec{u}-\vec{v}) - \frac{1}{2} \brac{\Dso{} (\vec{u}+\vec{v}) \cdot (\vec{u}-\vec{v}) +H_{\Dso{}} ((\vec{u}+\vec{v})\cdot,\vec{u}-\vec{v})} \\
\end{split}
 \]
 
In order to estimate $\Ds{\sigma} \brac{\vec{v} \cdot \Dso{} (\vec{u}-\vec{v})}$ we use \Cref{la:wugsd8gfus0df}.

We can conclude.
\end{proof}

The last term from \eqref{eq:part4:split1:splitagain} is estimated in the following
\begin{lemma}
For $d \geq 3$
\[
\abs{\int_{\R^d} \brac{\vec{v} \wedge  \brac{\Dso{}(\vec{u}-\vec{v}) \wedge \Dso{} \vec{u}}} \cdot \brac{ \vec{v} \wedge  \Dso{}(\vec{u}-\vec{v})}} \aleq \|\nabla (\vec{u}-\vec{v})\|_{L^2(\R^d)}^2 \ \brac{\|\Dso{} \vec{v}\|_{L^{2d}(\R^d)}^2 +\|\Dso{} \vec{u}\|_{L^{2d}(\R^d)}^2} \\
\]
\end{lemma}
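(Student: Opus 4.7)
The plan is to first simplify the integrand algebraically using standard cross-product identities, exposing a scalar factor of the form $\vec{v}\cdot \Dso (\vec{u}-\vec{v})$, to which the Shatah–Struwe-type trick of \Cref{la:struwetrickfrac1} applies.

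First I would expand the triple cross product via $\vec{a}\wedge(\vec{b}\wedge\vec{c}) = \vec{b}(\vec{a}\cdot\vec{c}) - \vec{c}(\vec{a}\cdot\vec{b})$ with $\vec{a}=\vec{v}$, $\vec{b}=\Dso(\vec{u}-\vec{v})$, $\vec{c}=\Dso\vec{u}$, obtaining
\[
 \vec{v}\wedge\brac{\Dso(\vec{u}-\vec{v})\wedge \Dso \vec{u}} = \Dso(\vec{u}-\vec{v})\,\scpr{\vec{v}}{\Dso\vec{u}} - \Dso\vec{u}\,\scpr{\vec{v}}{\Dso(\vec{u}-\vec{v})}.
\]
Then I would take the inner product with $\vec{v}\wedge \Dso(\vec{u}-\vec{v})$. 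Since $\Dso(\vec{u}-\vec{v})\cdot (\vec{v}\wedge\Dso(\vec{u}-\vec{v})) = 0$, the first term drops out, and the integrand reduces to
\[
 -\scpr{\vec{v}}{\Dso(\vec{u}-\vec{v})}\ \scpr{\Dso\vec{u}}{\vec{v}\wedge\Dso(\vec{u}-\vec{v})}.
\]

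Next I would bound the two scalar factors in dual Lebesgue spaces. For the first, \Cref{la:struwetrickfrac1} with $\sigma=0$ gives
\[
 \|\scpr{\vec{v}}{\Dso(\vec{u}-\vec{v})}\|_{L^{\frac{2d}{d-1}}(\R^d)} \aleq \|\nabla(\vec{u}-\vec{v})\|_{L^2(\R^d)}\,\brac{\|\Dso\vec{u}\|_{L^{2d}(\R^d)}+\|\Dso\vec{v}\|_{L^{2d}(\R^d)}}.
\]
For the second factor, using $|\vec{v}|\equiv 1$ together with the pointwise bound $|\scpr{\Dso\vec{u}}{\vec{v}\wedge\Dso(\vec{u}-\vec{v})}| \leq |\Dso\vec{u}|\,|\Dso(\vec{u}-\vec{v})|$ and H\"older's inequality,
\[
 \|\scpr{\Dso\vec{u}}{\vec{v}\wedge\Dso(\vec{u}-\vec{v})}\|_{L^{\frac{2d}{d+1}}(\R^d)} \aleq \|\Dso(\vec{u}-\vec{v})\|_{L^2(\R^d)}\,\|\Dso\vec{u}\|_{L^{2d}(\R^d)}.
\]
Finally I would apply H\"older's inequality with the dual exponent pair $\tfrac{2d}{d-1}$ and $\tfrac{2d}{d+1}$, use $\|\Dso(\vec{u}-\vec{v})\|_{L^2}\aeq \|\nabla(\vec{u}-\vec{v})\|_{L^2}$ from \Cref{la:nablalaps}, and absorb cross-products via $ab\leq a^2+b^2$ to arrive at the claimed bound.

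I do not expect any genuine obstacle here: all algebra is a straightforward application of the scalar triple product identities, and the only nontrivial analytic input is the fractional Shatah–Struwe trick already packaged in \Cref{la:struwetrickfrac1}. The main point worth noting is that the structural cancellation $\Dso(\vec{u}-\vec{v})\cdot (\vec{v}\wedge\Dso(\vec{u}-\vec{v}))=0$ is what prevents us from losing control of a top-order factor $\Dso(\vec{u}-\vec{v})$ appearing twice without a compensating $\vec{v}\cdot$ contraction.
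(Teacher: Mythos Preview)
Your proposal is correct and essentially the same as the paper's proof: both reduce the integrand to a product of $\scpr{\vec{v}}{\Dso(\vec{u}-\vec{v})}$ with a scalar triple product (the paper via the Lagrange identity $(\vec{a}\wedge\vec{b})\cdot(\vec{a}\wedge\vec{d})=|\vec{a}|^2(\vec{b}\cdot\vec{d})-(\vec{a}\cdot\vec{d})(\vec{a}\cdot\vec{b})$, you via BAC--CAB first), then apply \Cref{la:struwetrickfrac1} with $\sigma=0$ and H\"older exactly as you describe. The resulting expressions are identical up to a cyclic permutation of the triple product, so there is no substantive difference.
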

\begin{proof}
Recall the formula
\[
 (\vec{a} \wedge \vec{b}) \cdot (\vec{c} \wedge \vec{d}) = (\vec{a} \cdot \vec{c}) (\vec{b} \cdot \vec{d}) - (\vec{a} \cdot \vec{d}) (\vec{c} \cdot \vec{b})
\]
so in particular
\[
 (\vec{a} \wedge \vec{b}) \cdot (\vec{a} \wedge \vec{d}) = |\vec{a}|^2 (\vec{b} \cdot \vec{d}) - (\vec{a} \cdot \vec{d}) (\vec{a} \cdot \vec{b}).
\]
For $\vec{a} :=\vec{v}$, $\vec{b} = {\Dso{}(\vec{u}-\vec{v}) \wedge \Dso{} \vec{u}}$, $\vec{d} = \Dso{}(\vec{u}-\vec{v})$ we observe that $\vec{b} \cdot \vec{d} = 0$, so
\[
 \brac{\vec{v} \wedge  \brac{\Dso{}(\vec{u}-\vec{v}) \wedge \Dso{} \vec{u}}} \cdot \brac{ \vec{v} \wedge  \Dso{}(\vec{u}-\vec{v})}  =
 \scpr{\vec{v} }{\brac{\Dso{}(\vec{u}-\vec{v}) \wedge \Dso{} \vec{u}}}\ \scpr{\vec{v}}{\Dso{}(\vec{u}-\vec{v})}
\]
We combine this observation with the estimate of \Cref{la:struwetrickfrac1} for $\sigma = 0$ and conclude 
%
%
\[
 \begin{split}
& \int_{\R^d} \abs{\brac{\vec{v} \wedge  \brac{\Dso{}(\vec{u}-\vec{v}) \wedge \Dso{} \vec{u}}} \cdot \brac{ \vec{v} \wedge  \Dso{}(\vec{u}-\vec{v})}}\\
 \aleq & \|\vec{v}\|_{L^\infty(\R^d)}\, \| \Dso (\vec{u}-\vec{v})\|_{L^2(\R^d)}\, \| \Dso \vec{u} \|_{L^{2d}(\R^d)}\, \|\scpr{\vec{v}}{\Dso{}(\vec{u}-\vec{v})}\|_{L^{\frac{2d}{d-1}}(\R^d)}\\
 \aleq&\| \Dso (\vec{u}-\vec{v})\|_{L^2(\R^d)}\, \| \Dso \vec{u} \|_{L^{2d}(\R^d)}\, \|\nabla (\vec{u}-\vec{v})\|_{L^2(\R^d)}\, \brac{\|\Dso \vec{u}\|_{L^{2d}(\R^d)} + \|\Dso \vec{v}\|_{L^{2d}(\R^d)}}.
 \end{split}
\]

We can conclude.
\end{proof} 
By now we have estimated the second term in \eqref{eq:part4:split1}, which in turn concludes the desired estimate for the second term on the right-hand side of \eqref{eq:part4}.

The last term we need to understand is the last term of \eqref{eq:part4}, namely we are interested in an estimate for
\[
 \int_{\R^d} \vec{v} \wedge  \brac{[\Dso{}, \vec{v} \wedge ](\Dso{} (\vec{u}-\vec{v}))} \cdot \partial_t (\vec{u}-\vec{v})
\]
Using again the formula
\[
\vec{a} \wedge (\vec{b} \wedge \vec{c})=\vec{b} (\vec{c} \cdot \vec{a}) - \vec{c} (\vec{a} \cdot \vec{b}) 
\]
we have 
\begin{equation}\label{eq:verylastpart:split1}
\begin{split}
 & \brac{\vec{v} \wedge  \brac{[\Dso{}, \vec{v} \wedge ](\Dso{} (\vec{u}-\vec{v}))}}^i \\
 =& \sum_{j=1}^3\brac{v^j \brac{[\Dso{}, v^i ](\Dso{} (u-v)^j)}-\brac{[\Dso{}, v^j ](\Dso{} (u-v)^i)}}\\
= & \sum_{j=1}^3v^j \Dso{}v^i\, \Dso{} (u-v)^j\\
&-\sum_{j=1}^3v^j \Dso{}v^j\, \Dso{} (u-v)^i\\
  &+\sum_{j=1}^3v^j H_{\Dso}(v^i ,\Dso{} (u-v)^j)\\
  &-\sum_{j=1}^3 v^jH_{\Dso}(v^j ,\Dso{} (u-v)^i).
\end{split}
\end{equation}

We estimate the first term in  \eqref{eq:verylastpart:split1}.
\begin{lemma}
For $d \geq 3$,
\[
\begin{split}
 &\left \|\sum_{j=1}^3v^j \Dso{}v^i\, \Dso{} (u-v)^j\right \|_{L^2(\R^d)}\\
 \aleq&\brac{\|\Dso{} \vec{v}\|_{L^{2d}(\R^d)}^2+\|\Dso{} \vec{u}\|^2_{L^{2d}(\R^d)}}\, \|\nabla (\vec{u}-\vec{v})\|_{L^2(\R^d)}.
 \end{split}
\]
and in particular
\[
\begin{split}
 &\abs{\int_{\R^d} \sum_{i,j=1}^3v^j \Dso{}v^i\, \Dso{} (u-v)^j\, \partial_t (u-v)^i }\\
 \aleq & \brac{\|\Dso{} \vec{v}\|_{L^{2d}(\R^d)}^2+\|\Dso{} \vec{u}\|^2_{L^{2d}(\R^d)}}\, \brac{\|\nabla (\vec{u}-\vec{v})\|_{L^2(\R^d)}^2+\|\partial_t (\vec{u}-\vec{v})\|_{L^2(\R^d)}^2}.
\end{split}
 \]

\end{lemma}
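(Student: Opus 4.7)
The key observation is that the sum over $j$ factorizes: for each fixed $i$,
\[
\sum_{j=1}^3 v^j\, \Dso v^i\, \Dso (u-v)^j = \Dso v^i\, \scpr{\vec{v}}{\Dso(\vec{u}-\vec{v})},
\]
so the estimate reduces to controlling the pointwise product $\Dso v^i \cdot \scpr{\vec{v}}{\Dso(\vec{u}-\vec{v})}$ in $L^2$. The naive route of bounding by $\|\Dso \vec{v}\|_{L^{2d}} \|\Dso(\vec{u}-\vec{v})\|_{L^{\frac{2d}{d-1}}}$ fails, because Sobolev embedding would force us to pay $\|\Ds{3/2}(\vec{u}-\vec{v})\|_{L^2}$, which is not controlled by the assumed energy. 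The point is that the scalar product $\scpr{\vec{v}}{\Dso(\vec{u}-\vec{v})}$ is much better behaved than $\Dso(\vec{u}-\vec{v})$ alone, because $|\vec{u}|=|\vec{v}|=1$ forces $(\vec{u}+\vec{v})\cdot(\vec{u}-\vec{v})=0$.

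The plan is then as follows. First, I would apply H\"older's inequality to get
\[
\|\Dso v^i\, \scpr{\vec{v}}{\Dso(\vec{u}-\vec{v})}\|_{L^2(\R^d)} \aleq \|\Dso v^i\|_{L^{2d}(\R^d)}\, \|\scpr{\vec{v}}{\Dso(\vec{u}-\vec{v})}\|_{L^{\frac{2d}{d-1}}(\R^d)}.
\]
Next, I would invoke \Cref{la:struwetrickfrac1} with $\sigma = 0$ to gain the better bound
\[
\|\scpr{\vec{v}}{\Dso(\vec{u}-\vec{v})}\|_{L^{\frac{2d}{d-1}}(\R^d)} \aleq \|\nabla(\vec{u}-\vec{v})\|_{L^2(\R^d)}\, \brac{\|\Dso \vec{u}\|_{L^{2d}(\R^d)}+\|\Dso \vec{v}\|_{L^{2d}(\R^d)}}.
\]
Combining these two inequalities and using Young's inequality on the product $\|\Dso \vec{v}\|_{L^{2d}}(\|\Dso \vec{u}\|_{L^{2d}}+\|\Dso \vec{v}\|_{L^{2d}})$ yields the first claimed estimate, after summing the three components indexed by $i$.

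For the second (integrated) bound, I would test against $\partial_t(u-v)^i$ using Cauchy--Schwarz,
\[
\abs{\int_{\R^d} \sum_{i,j} v^j\, \Dso v^i\, \Dso(u-v)^j\, \partial_t(u-v)^i} \leq \sum_i \|\Dso v^i\, \scpr{\vec{v}}{\Dso(\vec{u}-\vec{v})}\|_{L^2(\R^d)}\, \|\partial_t(u-v)^i\|_{L^2(\R^d)},
\]
and then absorb the product $ab \leq \tfrac{1}{2}(a^2+b^2)$ to pick up $\|\partial_t(\vec{u}-\vec{v})\|_{L^2}^2+\|\nabla(\vec{u}-\vec{v})\|_{L^2}^2$. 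The only delicate point is the first step: recognizing the factorization and then routing through the Shatah--Struwe-type cancellation encoded in \Cref{la:struwetrickfrac1}, rather than attempting a direct H\"older estimate that would demand a subcritical Sobolev norm of $\vec{u}-\vec{v}$ not available in the energy class.
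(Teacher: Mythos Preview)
Your proposal is correct and matches the paper's proof essentially line for line: the paper also factors the sum as $\Dso v^i\,\scpr{\vec v}{\Dso(\vec u-\vec v)}$, applies H\"older to split into $\|\Dso\vec v\|_{L^{2d}}\cdot\|\scpr{\vec v}{\Dso(\vec u-\vec v)}\|_{L^{\frac{2d}{d-1}}}$, and then invokes \Cref{la:struwetrickfrac1} with $\sigma=0$. The second estimate follows by Cauchy--Schwarz exactly as you describe.
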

\begin{proof}
By \Cref{la:struwetrickfrac1} for $\sigma = 0$
\[
\begin{split}
 &\left \|\sum_{j=1}^3v^j \Dso{}v^i\, \Dso{} (u-v)^j)\right \|_{L^2(\R^d)}\\
 \aleq &\|\Dso\vec{v}\|_{L^{2d(\R^d)}}\, \|\scpr{\vec{v}}{\Dso{} (\vec{u}-\vec{v})}\|_{L^{\frac{2d}{d-1}}(\R^d)}\\
 \aleq &\|\Dso\vec{v}\|_{L^{2d(\R^d)}}\, \|\Dso (\vec{u}-\vec{v})\|_{L^{2}(\R^d)}\, \brac{\|\Dso \vec{u}\|_{L^{2d}(\R^d)}+\|\Dso \vec{v}\|_{L^{2d}(\R^d)}}.
\end{split}
\]
We can conclude.

\end{proof}

We estimate the second term on the right-hand side of \eqref{eq:verylastpart:split1}
\begin{lemma}
For $d \geq 2$,
\[
 \|\sum_{j=1}^3v^j \Dso{}v^j\ \Dso{} (u-v)^i\|_{L^2(\R^d)} \aleq \|\Dso{} \vec{v}\|_{L^{2d,2}(\R^d)}^2\, \|\nabla (\vec{u}-\vec{v})\|_{L^2(\R^d)}.
\]
In particular
\[
\begin{split}
&\sum_{i,j=1}^3\int_{\R^d}  {v^j \Dso{}v^j\, \Dso{} (u-v)^i}\, \partial_t (u-v)^i\\
\aleq& \|\Dso{} \vec{v}\|_{L^{2d,2}(\R^d)}^2\, \|\nabla (\vec{u}-\vec{v})\|_{L^2(\R^d)}\, \|\partial_t (\vec{u}-\vec{v})\|_{L^2(\R^d)}.
\end{split}
\]
\end{lemma}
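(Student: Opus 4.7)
The plan is straightforward: the key observation is that the scalar factor $\sum_{j=1}^3 v^j \Dso v^j = \scpr{\vec{v}}{\Dso \vec{v}}$ can be rewritten as a commutator via the constraint $|\vec{v}| \equiv 1$, exactly as in \eqref{eq:ucdotu}. Namely,
\[
\sum_{j=1}^3 v^j \Dso v^j = \scpr{\vec{v}}{\Dso \vec{v}} = -\frac{1}{2} H_{\Dso{}}(\vec{v}\cdot,\vec{v}).
\]
This is the whole point: without this cancellation, $\Dso \vec{v}$ would only be controlled in $L^{2d}$ and we would lose a power.

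Given this, I would pull $\scpr{\vec{v}}{\Dso \vec{v}}$ out in $L^\infty$ and leave $\Dso(u-v)^i$ in $L^2$. By \eqref{eq:comm:3} (the Lorentz-refined $L^\infty$ bound on $H_{\Dso}$),
\[
 \|\scpr{\vec{v}}{\Dso \vec{v}}\|_{L^\infty(\R^d)} \aleq \|H_{\Dso}(\vec{v}\cdot,\vec{v})\|_{L^\infty(\R^d)} \aleq \|\Dso \vec{v}\|_{L^{2d,2}(\R^d)}^2.
\]
Combined with H\"older's inequality this gives
\[
 \Bigl\|\sum_{j=1}^3 v^j \Dso v^j\, \Dso (u-v)^i\Bigr\|_{L^2(\R^d)} \leq \|\scpr{\vec{v}}{\Dso \vec{v}}\|_{L^\infty(\R^d)}\, \|\Dso(u-v)^i\|_{L^2(\R^d)},
\]
and the right-hand side is bounded by $\|\Dso \vec{v}\|_{L^{2d,2}(\R^d)}^2 \|\nabla(\vec{u}-\vec{v})\|_{L^2(\R^d)}$ after applying \Cref{la:nablalaps}.

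For the second (integral) inequality I would simply apply Cauchy--Schwarz to the integral, placing $\partial_t(u-v)^i$ in $L^2$ and the remainder in $L^2$, then invoke the pointwise estimate already proved. There is no genuine obstacle here; the only real content is the algebraic trick in the first line, and once that is in place the Lorentz-space $L^\infty$ bound \eqref{eq:comm:3} does all the analytic work. I would mention this cancellation explicitly in the write-up, since it is the same mechanism that makes the Shatah--Struwe argument function for the ``tangential part'' of the nonlinearity.
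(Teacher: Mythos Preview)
Your proof is correct and matches the paper's approach exactly: the paper also rewrites $\vec{v}\cdot\Dso\vec{v}$ as $-\tfrac{1}{2}H_{\Dso}(\vec{v}\cdot,\vec{v})$ (implicitly, via \eqref{eq:ucdotu}), invokes \eqref{eq:comm:3} to bound it in $L^\infty$, and concludes by H\"older's inequality. Your write-up is just slightly more explicit about the intermediate algebraic step.
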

\begin{proof}
By \eqref{eq:comm:3} we have
\[
 \|\vec{v} \cdot \Dso{}\vec{v}\|_{L^\infty} \aleq \|\Dso{} \vec{v}\|_{L^{2d,2}(\R^d)}^2
\]
which readily implies the claim by H\"older's inequality.
\end{proof}

We estimate the third term on the right-hand side of \eqref{eq:verylastpart:split1}
\begin{lemma}\label{la:firstguyisfine}
For any $\alpha \in (1,d+\frac{1}{2})$, $d \geq 2$
\[
 \|\sum_{j=1}^3v^j H_{\Dso}(v^i ,\Dso{} (u-v)^j)\|_{L^2(\R^n)} \aleq \|\nabla (\vec{u}-\vec{v})\|_{L^2(\R^d)}\, \|\Ds{\alpha} \vec{v}\|_{L^{\frac{2d}{2\alpha-1}}(\R^d)}^2.
\]
In particular we have 
\[
\begin{split}
 &\abs{\int \sum_{i,j=1}^3{v^j H_{\Dso}(v^i ,\Dso{} (u-v)^j)}\, \partial_t (u-v)^i}\\
 \aleq& \brac{\|\nabla (\vec{u}-\vec{v})\|_{L^2(\R^d)}^2+\|\partial_t (\vec{u}-\vec{v})\|_{L^2(\R^d)}^2}\, \|\Ds{\alpha} \vec{v}\|_{L^{\frac{2d}{2\alpha-1}}(\R^d)}^2.
 \end{split}
\]
 \end{lemma}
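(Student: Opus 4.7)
The plan is to expose the cancellation coming from $|\vec{v}|^2\equiv 1$, namely that $\varphi := \vec{v}\cdot \Dso(\vec{u}-\vec{v})$ is strictly more regular than $\Dso(\vec{u}-\vec{v})$ alone (as quantified by \Cref{la:struwetrickfrac1}), and then apply the Leibniz/commutator estimates from \Cref{s:prelim}. Concretely, using \Cref{la:Hdssformula} to write $H_{\Dso}(v^i,\Dso(u-v)^j)$ as a double integral and inserting the splitting $v^j(x) = v^j(y) + (v^j(x)-v^j(y))$ into the integrand, one arrives at the key identity
\[
\sum_{j=1}^3 v^j H_{\Dso}(v^i,\Dso(u-v)^j)(x) = H_{\Dso}(v^i,\varphi)(x) - c\int_{\R^d} \frac{(v^i(x)-v^i(y))\,(\vec{v}(x)-\vec{v}(y))\cdot \Dso(\vec{u}-\vec{v})(y)}{|x-y|^{d+1}}\, dy.
\]

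For the commutator term $H_{\Dso}(v^i,\varphi)$, I would use \eqref{eq:comm:5} with the split $\sigma = 2-\alpha$ (valid when $\alpha\in(1,3/2)$), which gives
\[
\|H_{\Dso}(v^i,\varphi)\|_{L^2} \aleq \|\Ds{2-\alpha}v^i\|_{L^{\frac{2d}{3-2\alpha}}}\,\|\Ds{\alpha-1}\varphi\|_{L^{\frac{2d}{d+2\alpha-3}}}.
\]
The first factor is handled by Sobolev embedding (\Cref{la:sobolev}) which yields $\|\Ds{2-\alpha}v^i\|_{L^{\frac{2d}{3-2\alpha}}} \aleq \|\Ds{\alpha}\vec{v}\|_{L^{\frac{2d}{2\alpha-1}}}$; the second factor is exactly the bound of \Cref{la:struwetrickfrac1} applied with its parameter equal to $\alpha-1\in[0,1)$, producing $\|\nabla(\vec{u}-\vec{v})\|_{L^2}(\|\Ds{\alpha}\vec{u}\|_{L^{\frac{2d}{2\alpha-1}}}+\|\Ds{\alpha}\vec{v}\|_{L^{\frac{2d}{2\alpha-1}}})$. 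For the triple-integral term $I^i$, I would invoke \Cref{la:commies3b} with $f=v^i$, $g$ running over components of $\vec{v}$, and $h=\Dso(u-v)^j$: choosing $\alpha_1=\alpha_2$ slightly larger than $\tfrac{1}{2}$ and smaller than $\min(\alpha,(d+1)/2)$ so that $\alpha_1+\alpha_2>1$, together with $p_1=p_2 = \tfrac{2d}{2\alpha_i-1}$ and $p_3 = \tfrac{2d}{d+2-2\alpha_1-2\alpha_2}$, makes the exponent on $h$ exactly $2$ and $\|\Dso(\vec{u}-\vec{v})\|_{L^2}\aeq \|\nabla(\vec{u}-\vec{v})\|_{L^2}$. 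Sobolev embedding then reduces each $\Ds{\alpha_i}\vec{v}$ factor to $\|\Ds{\alpha}\vec{v}\|_{L^{\frac{2d}{2\alpha-1}}}$.

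The main obstacle is the joint balancing in the first piece: requiring simultaneously that the Sobolev reduction $\Ds{2-\alpha}v^i \to \Ds{\alpha}\vec{v}$ be admissible (which forces $\alpha<3/2$) and that the parameter in \Cref{la:struwetrickfrac1} lie in $[0,1)$ restricts the direct argument to $\alpha\in(1,3/2)$. The remaining range $\alpha\in[3/2,d+\tfrac{1}{2})$ is reduced to this case via the Sobolev monotonicity $\|\Ds{\alpha'}\vec{v}\|_{L^{\frac{2d}{2\alpha'-1}}}\aleq \|\Ds{\alpha}\vec{v}\|_{L^{\frac{2d}{2\alpha-1}}}$ for $1/2<\alpha'<\alpha$ recorded in the ``repeating estimates'' paragraph. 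Note that \Cref{la:struwetrickfrac1} introduces an $\|\Ds{\alpha}\vec{u}\|_{L^{\frac{2d}{2\alpha-1}}}$ factor alongside $\|\Ds{\alpha}\vec{v}\|_{L^{\frac{2d}{2\alpha-1}}}$; this is harmless for the final application since it is absorbed in the overall bound \eqref{eq:th:detest:est} of \Cref{th:detest}.
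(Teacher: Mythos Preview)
Your proposal is correct and follows essentially the same route as the paper: the identical decomposition into $H_{\Dso}(v^i,\varphi)$ plus the triple-difference integral via \Cref{la:Hdssformula}, then \eqref{eq:comm:5} together with \Cref{la:struwetrickfrac1} for the first piece and \Cref{la:commies3b} for the second, with the same Sobolev monotonicity to cover the full $\alpha$-range. You also correctly note the appearance of an $\|\Ds{\alpha}\vec{u}\|_{L^{\frac{2d}{2\alpha-1}}}$ contribution from \Cref{la:struwetrickfrac1}; this is present in the paper's own proof as well (so the lemma's stated right-hand side is slightly imprecise), and as you say it is harmless for \Cref{th:detest}.
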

 \begin{proof}
We use \Cref{la:Hdssformula}, and have 
\[
H_{\Dso}(v^i ,\Dso{} (u-v)^j)(x) = \int_{\R^d} \frac{\brac{v^i(x)-v^i(y)}\, \brac{\Dso (u-v)^j(x) - \Dso (u-v)^j (y)}}{|x-y|^{d+1}} dy
\]
Thus,
\[
\begin{split}
&\sum_{j=1}^3 v^j(x) H_{\Dso}(v^i ,\Dso{} (u-v)^j)(x)\\
=& c\int_{\R^d} \frac{\brac{v^i(x)-v^i(y)}\, \brac{\scpr{\vec{v}(x)}{\Dso (\vec{u}-\vec{v})(x)} - \scpr{\vec{v}(x)}{\Dso (\vec{u}-\vec{v}) (y) } }}{|x-y|^{d+1}} dy\\
=& c\int_{\R^d} \frac{\brac{v^i(x)-v^i(y)}\, \brac{\scpr{\vec{v}(x)}{\Dso (\vec{u}-\vec{v})(x)} - \scpr{\vec{v}(y)}{\Dso (\vec{u}-\vec{v}) (y) } }}{|x-y|^{d+1}} dy\\
& -c\int_{\R^d} \frac{\brac{v^i(x)-v^i(y)}\, \scpr{\vec{v}(x)-\vec{v}(y)}{\Dso (\vec{u}-\vec{v}) (y)  }}{|x-y|^{d+1}} dy\\
=&H_{\Dso}\brac{v^i, \langle \vec{v},\Dso (\vec{u}-\vec{v})\rangle}(x)\\
&-c\int_{\R^d} \frac{\brac{v^i(x)-v^i(y)}\, \scpr{\vec{v}(x)-\vec{v}(y)}{\Dso (\vec{u}-\vec{v}) (y)  }}{|x-y|^{d+1}} dy.
\end{split}
\]
The first term we can estimate with the help of \Cref{la:struwetrickfrac1}, for any small $\sigma \in (0,\frac{1}{2})$
\[
\begin{split}
 &\|H_{\Dso}\brac{v^i, \langle \vec{v},\Dso (\vec{u}-\vec{v})\rangle}\|_{L^2(\R^d)}\\
 \overset{\eqref{eq:comm:5}}{\aleq}&\|\Ds{1-\sigma} v^i\|_{L^{\frac{2d}{1-2\sigma}}(\R^d)}\, \|\Ds{\sigma} \langle \vec{v},\Dso (\vec{u}-\vec{v})\rangle\|_{L^{\frac{2d}{d-1+2\sigma}}(\R^d)}\\
 \aleq&\|\Dso v^i\|_{L^{2d}(\R^d)}\, \|\nabla (\vec{u}-\vec{v})\|_{L^2(\R^d)}\, \brac{\|\Ds{1+\sigma} \vec{u}\|_{L^{\frac{2d}{2(1+\sigma)-1}}(\R^d)} + \|\Ds{1+\sigma} \vec{v}\|_{L^{\frac{2d}{2(1+\sigma)-1}}(\R^d)}}.
 \end{split}
\]
For $\alpha \geq  1+\sigma$ this gives the correct estimate.

For the second term we can estimate with the help of \Cref{la:commies3b}. Taking there $\alpha_1 = \alpha_2 = \frac{1+\beta}{2}$ for a any small $\beta \in (0,1)$, and 
$p_1 = p_2 = \frac{2d}{\beta}$, $p_3 = \frac{2d}{d-2\beta}$ we have $\frac{dp_3}{d+\beta p_3} = 2$, and thus
\[
 \begin{split}
&\brac{\int_{\R^d} \abs{\int_{\R^d} \frac{\brac{v^i(x)-v^i(y)}\, \scpr{\vec{v}(x)-\vec{v}(y)}{\Dso (\vec{u}-\vec{v}) (y)  }}{|x-y|^{d+1}} dy}^2 dx  }^{\frac{1}{2}}\\
\aleq& \|\Ds{\frac{1+\beta}{2}} v^i\|_{L^{\frac{2d}{\beta}} (\R^d)}\, \|\Ds{\frac{1+\beta}{2}} \vec{v}\|_{L^{\frac{2d}{2(1+\beta)-1}}(\R^d)}\, \|\Dso (\vec{u}-\vec{v})\|_{L^{2}(\R^d)}\\
\aleq&\|\Dso v^i\|_{L^{2d} (\R^d)}\, \|\Dso \vec{v}\|_{L^{2d}(\R^d)}\, \|\Dso (\vec{u}-\vec{v})\|_{L^{2}(\R^d)}.
 \end{split}
\]
We can conclude.
 \end{proof}

From the terms in \eqref{eq:verylastpart:split1} it remains to understand the last one. For this we use the halfwave equation of $\vec{u}$ and $\vec{v}$, \eqref{eq:half-wavemapseq}, to write
\begin{equation}\label{eq:verylastpart:split2}
\begin{split}
 &\int_{\R^d} \sum_{i,j=1}^3 v^jH_{\Dso}(v^j ,\Dso{} (u-v)^i)\, \partial_t (u-v)^i\\
=&\int_{\R^d} \sum_{i,j=1}^3 v^jH_{\Dso}(v^j ,\Dso{} (u-v)^i)\, \brac{(\vec{u}-\vec{v})\wedge \Dso \vec{u}}^i\\
&+\int_{\R^d} \sum_{i,j=1}^3 v^jH_{\Dso}(v^j ,\Dso{} (u-v)^i)\, \brac{\vec{v}\wedge \Dso \brac{\vec{u}-\vec{v}}}^i.
 \end{split}
 \end{equation}
In order to estimate the first term on the right-hand side of \eqref{eq:verylastpart:split2} we first establish the following.

Denote with $\lapms{\sigma} \equiv \Ds{-\sigma}$ the Riesz potential, then we have the following estimate. Observe the power of $\|\Dso{} \vec{v}\|_{L^{2d}(\R^d)}$ which is crucial here.

\begin{lemma}\label{la:235236}
Let $\sigma \in (0,1)$, and  $d \geq 2$, then
\[
\begin{split}
 &\|\sum_{j=1}^3\lapms{\sigma} \brac{v^jH_{\Dso}(v^j ,\Dso{} (u-v)^i)} \|_{L^{\frac{2d}{d+1-2\sigma}}(\R^d)}\\
 \aleq &\|\nabla (\vec{u}-\vec{v})\|_{L^2(\R^d)}\ \|\Dso{} \vec{v}\|_{L^{2d}(\R^d)}.
\end{split}
 \]
\end{lemma}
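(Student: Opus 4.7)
The plan is to exploit $|\vec v|\equiv 1$ to collapse the sum into a single $|\vec v(x)-\vec v(y)|^2$-weighted integral, reduce via Hardy--Littlewood--Sobolev to an $L^{2d/(d+1)}$-bound, and close by combining Prats' identification of the Gagliardo square-function with the $\theta=\tfrac12$ case of \Cref{co:gagnirsob}. The key structural observation is that two ``half-derivative'' factors of the sphere-valued $\vec v$ each contribute $\|\Dso\vec v\|_{L^{2d}}^{1/2}$ through GNS, multiplying to the single power of $\|\Dso\vec v\|_{L^{2d}}$ the statement demands; this is exactly why the statement is linear rather than quadratic in $\|\Dso\vec v\|_{L^{2d}}$.

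First I would use \Cref{la:Hdssformula} together with the identity $\langle\vec v(x),\vec v(x)-\vec v(y)\rangle=\tfrac12|\vec v(x)-\vec v(y)|^2$ (from $|\vec v|\equiv 1$) and sum in $j$ to obtain
\[
  F(x):=\sum_j v^j(x)\,H_{\Dso}(v^j,\Dso(u-v)^i)(x)=\tfrac c2\int_{\R^d}\frac{|\vec v(x)-\vec v(y)|^2\bigl(\Dso(u-v)^i(x)-\Dso(u-v)^i(y)\bigr)}{|x-y|^{d+1}}\,dy.
\]
By Hardy--Littlewood--Sobolev (\Cref{la:sobolev}), $\|\lapms{\sigma}F\|_{L^{2d/(d+1-2\sigma)}}\aleq \|F\|_{L^{2d/(d+1)}}$, so the task reduces to showing $\|F\|_{L^{2d/(d+1)}}\aleq \|\Dso(u-v)\|_{L^2}\|\Dso\vec v\|_{L^{2d}}$. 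Split $F=F_1-F_2$ by factoring out $\Dso(u-v)^i(x)$ versus absorbing $\Dso(u-v)^i(y)$ into the kernel, with
\[
 F_1(x)=\tfrac c2\,\Dso(u-v)^i(x)\cdot I(x),\qquad I(x):=\int_{\R^d}\frac{|\vec v(x)-\vec v(y)|^2}{|x-y|^{d+1}}\,dy.
\]

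For $F_1$: H\"older with $\tfrac12+\tfrac1{2d}=\tfrac{d+1}{2d}$ reduces matters to bounding $\|I\|_{L^{2d}}$. Writing $I=G^2$ where $G$ is the Gagliardo $L^2$ square-function of $\vec v$ at order $\tfrac12$, Prats' identification \cite{P17} (already invoked in the proofs of \Cref{la:commies3toy}, \Cref{la:commies3}) gives $\|G\|_{L^{4d}}\aeq \|\Ds{1/2}\vec v\|_{L^{4d}}$, and \Cref{co:gagnirsob} with $\beta=\tfrac12$, $p=4d$ (the boundary $p=2d/\beta$, where $\theta=2(\tfrac12-\tfrac{d}{4d})=\tfrac12$) yields $\|\Ds{1/2}\vec v\|_{L^{4d}}\aleq \|\vec v\|_{L^\infty}^{1/2}\|\Dso\vec v\|_{L^{2d}}^{1/2}$. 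Since $|\vec v|\equiv 1$, this gives $\|F_1\|_{L^{2d/(d+1)}}\aleq \|\Dso(u-v)\|_{L^2}\|\Dso\vec v\|_{L^{2d}}$.

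For $F_2=\tfrac c2\int\frac{|\vec v(x)-\vec v(y)|^2\Dso(u-v)^i(y)}{|x-y|^{d+1}}\,dy$ I would run the same Prats-plus-GNS plan by applying H\"older in $y$ to the triple product $|\vec v(x)-\vec v(y)|^{q_1}\cdot|\vec v(x)-\vec v(y)|^{q_2}\cdot|\Dso(u-v)^i(y)|^{q_3}$ with $\alpha_1=\alpha_2\approx\tfrac12$, $q_1=q_2=2$ so the two $\vec v$-factors become Gagliardo $L^2$ square-functions of order $\tfrac12$, and choosing $\alpha_3=\alpha_1+\alpha_2-1\downarrow 0$ with $q_3=2$ so the $\Dso(u-v)^i$-factor becomes a Riesz potential of $|\Dso(u-v)^i|^2$, controlled by $\|\Dso(u-v)\|_{L^2}^2$ through HLS; integrating in $x$ and using \Cref{co:gagnirsob} at $\theta=\tfrac12$ on each $\vec v$-factor finishes the bound. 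The main obstacle is that this symmetric choice sits exactly on the boundary $\alpha_1+\alpha_2=1$ of \Cref{la:commies3b}'s hypotheses (where the Riesz-potential factor is marginally divergent and the GNS-$p$-range collapses to a single boundary value), so the lemma cannot be cited off-the-shelf. One must therefore reprove the three-factor H\"older/Prats/GNS chain by hand in the $L^{2d/(d+1)}$ (rather than $L^2$) target, and bypass the $\alpha_3\downarrow 0$ boundary either by a small-perturbation/limiting argument or by using that the exponent $\theta=\tfrac12$ produced by the GNS identity $\theta=2(\tfrac{1+\alpha_3}{2}-d/s)$ is, remarkably, independent of $\alpha_3$ along the chosen diagonal.
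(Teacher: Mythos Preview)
Your reduction via $|\vec v|\equiv 1$ to the single integral $F(x)=\tfrac{c}{2}\int|\vec v(x)-\vec v(y)|^2(\Dso(u-v)^i(x)-\Dso(u-v)^i(y))\,|x-y|^{-d-1}\,dy$ is correct, and your treatment of $F_1$ is clean: Prats identifies the Gagliardo $L^2$ square function with $\|\Ds{1/2}\vec v\|_{L^{4d}}$, and \Cref{co:gagnirsob} at the admissible endpoint $(\beta,p)=(\tfrac12,4d)$ gives exactly one half-power of $\|\Dso\vec v\|_{L^{2d}}$ per factor.

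The gap is in $F_2$. Once you apply Sobolev to $\lapms{\sigma}$ and commit to bounding $\|F_2\|_{L^{2d/(d+1)}}$, the three-factor H\"older/Prats/GNS chain cannot close with a \emph{single} power of $\|\Dso\vec v\|_{L^{2d}}$. Concretely: to land $h=\Dso(u-v)^i$ in $L^2$ via the Riesz-potential factor you need $\alpha_1+\alpha_2=1+\delta$ with $\delta>0$, which forces $\tfrac{1}{p_1}+\tfrac{1}{p_2}=\tfrac{1+2\delta}{2d}$. But the validity range of \Cref{co:gagnirsob} gives $\tfrac{1}{p_i}\le\tfrac{\alpha_i}{2d}$ for each factor, hence $\tfrac{1}{p_1}+\tfrac{1}{p_2}\le\tfrac{1+\delta}{2d}<\tfrac{1+2\delta}{2d}$ --- an obstruction independent of how you split $\alpha_1,\alpha_2$. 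Your observation that $\theta=\tfrac12$ stays constant along the diagonal is true but irrelevant: the underlying GNS inequality $\|\Ds{\beta}f\|_{L^p}\aleq\|f\|_\infty^{1-\theta}\|\Dso f\|_{L^{2d}}^\theta$ is \emph{false} for $p<2d/\beta$ (test on $f=\sum_{k=1}^N\phi(\cdot-x_k)$ with widely separated centers: the left side scales like $N^{1/p}$, the right like $N^{\theta/(2d)}$, and $1/p>\theta/(2d)$ precisely when $p<2d/\beta$). The endpoint $\delta=0$ is no better, since the third H\"older factor then carries the non-integrable kernel $|x-y|^{-d}$.

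The paper's proof avoids this by \emph{not} discarding $\lapms{\sigma}$ via Sobolev. Instead it keeps the $\lapms{\sigma}$ and uses the algebraic identity
\[
H_{\Dso}(a,\Ds{\sigma}b)=-H_{\Dso}(\Ds{\sigma}a,b)+\Ds{\sigma}H_{\Dso}(a,b)-\tilde H_{\Dso,\Ds{\sigma}}(a,b)
\]
with $a=v^j$, $b=\Ds{1-\sigma}(u-v)^i$, so that $\sigma$ derivatives are transferred off $\Dso(u-v)^i$ and absorbed into $\lapms{\sigma}$. This produces several error terms (a double commutator handled by \Cref{la:doublecommie}, an adjoint $H^\ast_{\Ds{\sigma}}$ term treated by duality, and lower-order pieces), each of which can be estimated with only one power of $\|\Dso\vec v\|_{L^{2d}}$ because the $(u-v)$-factor now carries at most one full derivative. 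Your $F_1$ argument is in spirit the ``$\alpha=0$'' piece of this decomposition, but the remaining pieces require keeping the $\lapms{\sigma}$ smoothing in play rather than spending it up front.
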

\begin{proof}
The main problem we need to solve is that the term $\Dso{} (u-v)^i$ can not afford any more derivatives.The idea is to factor out $\Ds{\sigma}$ derivatives from this term and absorb it into $\lapms{\sigma}$ -- up to several error terms.

We observe first the following algebraic identity, 
\[
 \begin{split}
&H_{\Dso} (a,\Ds{\sigma} b)  \\
 =& - H_{\Dso} (\Ds{\sigma} a,  b)\\
 &+ \Ds{\sigma} H_{\Dso}(a,b) \\
 &- \Dso H_{\Ds{\sigma}}(a,b) + H_{\Ds{\sigma}} (\Dso a, b) + H_{\Ds{\sigma}} (a, \Dso b).
 \end{split}
\]
The last term is a double Leibniz type commutator, which we are going to name $\tilde{H}_{\Dso, \Ds{\sigma}}$
\[
 \tilde{H}_{\Dso,\Ds{\sigma}}(a,b) := \Dso H_{\Ds{\sigma}}(a,b) - H_{\Ds{\sigma}} (\Dso a, b) - H_{\Ds{\sigma}} (a, \Dso b).
\]
We apply the above identity to $a := v^j$, $b := \Ds{1-\sigma} (u-v)^i$. Using Sobolev inequality, \Cref{la:sobolev}, we find
\[
\begin{split}
 &\|\sum_{j=1}^3\lapms{\sigma} \brac{v^jH_{\Dso}(v^j ,(\Dso{} (u-v)^i))} \|_{L^{\frac{2d}{d+1-2\sigma}}(\R^d)}\\
 \aleq &\|\sum_{j=1}^3\brac{v^j H_{\Dso} (\Ds{\sigma} v^j,  \Ds{1-\sigma} (u-v)^i)} \|_{L^{\frac{2d}{d+1}}(\R^d)}\\
 &+\|\sum_{j=1}^3\lapms{\sigma} \brac{v^j \Ds{\sigma} H_{\Dso} (v^j,  \Ds{1-\sigma} (u-v)^i)} \|_{L^{\frac{2d}{d+1-2\sigma}}(\R^d)}\\
 &+\|\sum_{j=1}^3\brac{v^j \tilde{H}_{\Dso,\Ds{\sigma}} (v^j,  \Ds{1-\sigma} (u-v)^i)} \|_{L^{\frac{2d}{d+1}}(\R^d)}.
\end{split}
 \]
The middle term on the right-hand side can be treated further, using that $\lapms{\sigma} \Ds{\sigma} f= f$, and using Sobolev inequality, \Cref{la:sobolev}, we have
\[
 \begin{split}
   &\|\sum_{j=1}^3\lapms{\sigma} \brac{v^j \Ds{\sigma} H_{\Dso} (v^j,  \Ds{1-\sigma} (u-v)^i)} \|_{L^{\frac{2d}{d+1-2\sigma}}(\R^d)}\\
   \aleq&\|\sum_{j=1}^3 v^j  H_{\Dso} (v^j,  \Ds{1-\sigma} (u-v)^i) \|_{L^{\frac{2d}{d+1-2\sigma}}(\R^d)}\\
   &+\|\sum_{j=1}^3\Ds{\sigma} v^j\, H_{\Dso} (v^j,  \Ds{1-\sigma} (u-v)^i) \|_{L^{\frac{2d}{d+1}}(\R^d)}\\
   &+\|\sum_{j=1}^3\lapms{\sigma} H^\ast_{\Ds{\sigma}}\brac{v^j, H_{\Dso} (v^j,  \Ds{1-\sigma} (u-v)^i)} \|_{L^{\frac{2d}{d+1-2\sigma}}(\R^d)}.
 \end{split}
\]
where 
\begin{equation}\label{eq:Hast}
 H^\ast_{\Ds{\sigma}}(f,g) := f\, \Ds{\sigma} g - \Ds{\sigma} f\, g - \Ds{\sigma} (fg)
\end{equation}
is the formal adjoint to $H_{\Ds{\sigma}}$ (see the estimate of this term below).
In summary, we have  
 \begin{equation}\label{eq:235236:split1}
\begin{split}
 &\|\sum_{j=1}^3\lapms{\sigma} \brac{v^jH_{\Dso}(v^j ,(\Dso{} (u-v)^i))} \|_{L^{\frac{2d}{d+1-2\sigma}}(\R^d)}\\
 \aleq &\|\sum_{j=1}^3\brac{v^j H_{\Dso} (\Ds{\sigma} v^j,  \Ds{1-\sigma} (u-v)^i)} \|_{L^{\frac{2d}{d+1}}(\R^d)}\\
&+\|\sum_{j=1}^3 v^j  H_{\Dso} (v^j,  \Ds{1-\sigma} (u-v)^i) \|_{L^{\frac{2d}{d+1-2\sigma}}(\R^d)}\\
   &+\|\sum_{j=1}^3\Ds{\sigma} v^j\, H_{\Dso} (v^j,  \Ds{1-\sigma} (u-v)^i) \|_{L^{\frac{2d}{d+1}}(\R^d)}\\
   &+\|\lapms{\sigma} \sum_{j=1}^3H^\ast_{\Ds{\sigma}}\brac{v^j, H_{\Dso} (v^j,  \Ds{1-\sigma} (u-v)^i)} \|_{L^{\frac{2d}{d+1-2\sigma}}(\R^d)}\\
 &+\|\sum_{j=1}^3\brac{v^j \tilde{H}_{\Dso,\Ds{\sigma}} (v^j,  \Ds{1-\sigma} (u-v)^i)} \|_{L^{\frac{2d}{d+1}}(\R^d)}\\
\end{split}
 \end{equation}

We treat \underline{first and second term in \eqref{eq:235236:split1}} at the same time. Namely for $\alpha \in \{0,\sigma\}$ we discuss
\[
 \|\sum_{j=1}^3\brac{v^j H_{\Dso} (\Ds{\alpha} v^j,  \Ds{1-\sigma} (u-v)^i)} \|_{L^{\frac{2d}{d+1-2(\sigma-\alpha)}}(\R^d)}.
\]
We observe  by \Cref{la:Hdssformula}, for any $x \in \R^d$
\[
\begin{split}
 &\sum_{j=1}^3 v^j(x) H_{\Dso} (\Ds{\alpha} v^j,  \Ds{1-\sigma} (u-v)^i)(x) \\
 =&c\sum_{j=1}^3 v^j(x) \int_{\R^d} \frac{(\Ds{\alpha} v^j(x)-\Ds{\alpha} v^j(y)) \,  \brac{\Ds{1-\sigma} (u-v)^i(x) - \Ds{1-\sigma} (u-v)^i(y)}}{|x-y|^{d+1}}\,  dy \\
 =&c\int_{\R^d} \frac{(\langle \vec{v},\Ds{\alpha} \vec{v}\rangle(x)-\langle \vec{v},\Ds{\alpha} \vec{v}\rangle(y)) \,  \brac{\Ds{1-\sigma} (u-v)^i(x) - \Ds{1-\sigma} (u-v)^i(y)}}{|x-y|^{d+1}}\,  dy \\
 &+c\sum_{j=1}^3 \int_{\R^d} \frac{\Ds{\alpha} v^j(y)\, \brac{v^j(x)-v^j(y)} \,  \brac{\Ds{1-\sigma} (u-v)^i(x) - \Ds{1-\sigma} (u-v)^i(y)}}{|x-y|^{d+1}}\,  dy \\
 =&c\int_{\R^d} \frac{(\langle \vec{v},\Ds{\alpha} \vec{v}\rangle(x)-\langle \vec{v},\Ds{\alpha} \vec{v}\rangle(y)) \,  \brac{\Ds{1-\sigma} (u-v)^i(x) - \Ds{1-\sigma} (u-v)^i(y)}}{|x-y|^{d+1}}\,  dy \\
 &+c\sum_{j=1}^3 \Ds{\alpha} v^j(x)\, \int_{\R^d} \frac{\brac{v^j(x)-v^j(y)} \,  \brac{\Ds{1-\sigma} (u-v)^i(x) - \Ds{1-\sigma} (u-v)^i(y)}}{|x-y|^{d+1}}\,  dy \\
 &-c\sum_{j=1}^3 \int_{\R^d} \frac{\brac{\Ds{\alpha} v^j(x)-\Ds{\alpha} v^j(y)}\, \brac{v^j(x)-v^j(y)} \,  \brac{\Ds{1-\sigma} (u-v)^i(x) - \Ds{1-\sigma} (u-v)^i(y)}}{|x-y|^{d+1}}\, dy \\
 \end{split}
\]
That is, again using  \Cref{la:Hdssformula},
\begin{equation}\label{eq:235236:split2}
\begin{split}
 &\abs{\sum_{j=1}^3 v^j(x) H_{\Dso} (\Ds{\alpha} v^j,  \Ds{1-\sigma} (u-v)^i)(x)}\\
 \aleq&\abs{H_{\Dso} \brac{\langle \vec{v},\Ds{\alpha} \vec{v}\rangle, \Ds{1-\sigma} (u-v)^i} } \\
  &+\max_{j} \abs{\Ds{\alpha} v^j} \abs{H_{\Dso} \brac{v^j, \Ds{1-\sigma} (u-v)^i} } \\
 &+\int_{\R^d} \frac{\abs{\Ds{\alpha} \vec{v}(x)-\Ds{\alpha} \vec{v}(y)}\, \abs{\vec{v}(x)-\vec{v}(y)} \,  \abs{\Ds{1-\sigma} (u-v)^i(x) - \Ds{1-\sigma} (u-v)^i(y)}}{|x-y|^{d+1}}\,  dy.
 \end{split}
\end{equation}
If $\alpha = 0$ the first term in \eqref{eq:235236:split2} is zero. Otherwise we have $\alpha = \sigma$ and applying twice Leibniz rule estimates,
\[
 \begin{split}
  &\|H_{\Dso} \brac{\langle \vec{v},\Ds{\sigma} \vec{v}\rangle, \Ds{1-\sigma} (u-v)^i} \|_{L^{\frac{2d}{d+1}}(\R^d)}\\
 \overset{\eqref{eq:comm:5}}{\aleq}& \|\Ds{1-\sigma} \langle \vec{v},\Ds{\sigma} \vec{v}\rangle\|_{L^{2d}(\R^d)}\, \|\Dso (u-v)^i\|_{L^{2}(\R^d)}\\
 \overset{\eqref{eq:ucdotu}}{\aeq}&\|\Ds{1-\sigma} H_{\Ds{\sigma}}(\vec{v}\cdot,\vec{v})\|_{L^{2d}(\R^d)}\, \|\Dso (u-v)^i\|_{L^{2}(\R^d)}\\
 \overset{\eqref{eq:comm:123}}{\aleq}&\|\Ds{1-\frac{\sigma}{2}} \vec{v} \|_{L^{\frac{2d}{1-\frac{\sigma}{2}}}(\R^d)}\, \|\Ds{\frac{\sigma}{2}} \vec{v} \|_{L^{\frac{4d}{\sigma}}(\R^d)}\, \|\Dso (u-v)^i\|_{L^{2}(\R^d)}
 \end{split}
\]
For sufficiently small $\sigma > 0$ we can apply Gagliardo-Nirenberg inequality, \Cref{la:GagliardoNirenberg}, and obtain
\[
 \|\Ds{1-\frac{\sigma}{2}} \vec{v} \|_{L^{\frac{2d}{1-\frac{\sigma}{2}}}(\R^d)}\, \|\Ds{\frac{\sigma}{2}} \vec{v} \|_{L^{\frac{4d}{\sigma}}(\R^d)} 
 \aleq \|\vec{v}\|_{L^\infty(\R^d)}\, \|\Dso \vec{v}\|_{L^{2d}(\R^d)}.
\]
This settles the first term in \eqref{eq:235236:split2}.

For the second term in \eqref{eq:235236:split2} we estimate if $\alpha = 0$,
\[
\begin{split}
 &\max_{j} \|\abs{v^j} \abs{H_{\Dso} \brac{v^j, \Ds{1-\sigma} (u-v)^i} }\|_{L^{\frac{2d}{d+1-2(\sigma-\alpha)}}(\R^d)}\\
 \aleq& \max_{j} \|v^j\|_{L^{\infty}(\R^d)}\, \|H_{\Dso} \brac{v^j, \Ds{1-\sigma} (u-v)^i} \|_{L^{\frac{2d}{d+1-2\sigma}}(\R^d)}\\
 \overset{\eqref{eq:comm:5}}{\aleq}&\max_{j} \|v^j\|_{L^{\infty}(\R^d)}\, \|\Ds{1-\sigma} v^j \|_{L^{\frac{2d}{1-2\sigma}}(\R^d)}\, \|\Dso (u-v)^i\|_{L^2(\R^d)}\\
 \aleq&\max_{j} \|v^j\|_{L^{\infty}(\R^d)}\, \|\Dso v^j \|_{L^{2d}(\R^d)}\, \|\Dso (u-v)^i\|_{L^2(\R^d)}.
 \end{split}
\]
In the last line we used Sobolev inequality, \Cref{la:sobolev}.
If $\alpha = \sigma$ we adapt this slightly,
\begin{equation}
\label{eq:est:aargh34325}
\begin{split}
 &\max_{j} \|\abs{\Ds{\sigma} v^j} \abs{H_{\Dso} \brac{v^j, \Ds{1-\sigma} (u-v)^i} }\|_{L^{\frac{2d}{d+1}}(\R^d)}\\
 \aleq& \max_{j} \|\Ds{\sigma} v^j\|_{L^{\frac{2d}{\sigma}}(\R^d)}\, \|H_{\Dso} \brac{v^j, \Ds{1-\sigma} (u-v)^i} \|_{L^{\frac{2d}{d+1-\sigma}}(\R^d)}\\
 \overset{\eqref{eq:comm:5}}{\aleq}&\max_{j} \|\Ds{\sigma} v^j\|_{L^{\frac{2d}{\sigma}}(\R^d)}\, \|\Ds{1-\sigma} v^j \|_{L^{\frac{2d}{1-\sigma}}(\R^d)}\, \|\Dso (u-v)^i\|_{L^2(\R^d)}\\
 \aleq&\max_{j} \|v^j\|_{L^{\infty}(\R^d)}\, \|\Dso v^j \|_{L^{2d}(\R^d)}\, \|\Dso (u-v)^i\|_{L^2(\R^d)}\\
 \end{split}
\end{equation}
In the last line we used Gagliardo-Nirenberg inequality, \Cref{la:GagliardoNirenberg}.
This provides the desired estimate for the second term in \eqref{eq:235236:split2}. 

For the third term in \eqref{eq:235236:split2} we use \Cref{la:commies3} (observe that all $p_i \geq 2$, so \eqref{eq:c3:23525} is trivially satisfied). 

If $\alpha = 0$, we instead use \Cref{la:commies3toy},
\[
\begin{split}
 &\left \|\int_{\R^d} \frac{\abs{\vec{v}(x)-\vec{v}(y)}\, \abs{\vec{v}(x)-\vec{v}(y)} \,  \abs{\Ds{1-\sigma} (u-v)^i(x) - \Ds{1-\sigma} (u-v)^i(y)}}{|x-y|^{d+1}}\,  dy \right \|_{L^{\frac{2d}{d+1-2\sigma}}(\R^d)}\\
\aleq&\|\vec{v}\|_{L^\infty(\R^d)}\, \left \|\int_{\R^d} \frac{\abs{\vec{v}(x)-\vec{v}(y)} \,  \abs{\Ds{1-\sigma} (u-v)^i(x) - \Ds{1-\sigma} (u-v)^i(y)}}{|x-y|^{d+1}}\,  dy \right \|_{L^{\frac{2d}{d+1-2\sigma}}(\R^d)}\\
\aleq&\|\vec{v}\|_{L^\infty(\R^d)}\,  \|\Ds{1-\sigma} \vec{v}\|_{L^{\frac{2d}{1-2\sigma}(\R^d)}}\, \|\Dso (u-v)^i\|_{L^{2}(\R^d)}\\
\aleq&\|\vec{v}\|_{L^\infty(\R^d)}\,  \|\Dso \vec{v}\|_{L^{2d}(\R^d)}\, \|\Dso (u-v)^i\|_{L^{2}(\R^d)}
 \end{split}
\] 
If $\alpha = \sigma$,
\[
\begin{split}
 &\left \|\int_{\R^d} \frac{\abs{\Ds{\sigma} \vec{v}(x)-\Ds{\sigma} \vec{v}(y)}\, \abs{\vec{v}(x)-\vec{v}(y)} \,  \abs{\Ds{1-\sigma} (u-v)^i(x) - \Ds{1-\sigma} (u-v)^i(y)}}{|x-y|^{d+1}}\,  dy \right \|_{L^{\frac{2d}{d+1}}(\R^d)}\\
 \aleq&   \|\Ds{2\sigma} \vec{v} \|_{L^{\frac{2d}{2\sigma}}(\R^d)}\, 
 \|\Ds{1-2\sigma} \vec{v}\|_{L^{\frac{2d}{1-2\sigma}}(\R^d)}\, 
 \|\Dso (u-v)^i\|_{L^2(\R^d)}\\
 \aleq&\|\vec{v}\|_{L^\infty(\R^d)}\, \|\Ds{1} \vec{v}\|_{L^{2d}(\R^d)}\, \|\Dso (u-v)^i\|_{L^2(\R^d)}.
 \end{split}
\] 
This provides the desired estimates for the terms in \eqref{eq:235236:split2}, i.e. the estimates for first and second term in \eqref{eq:235236:split1}.

The \underline{third term in \eqref{eq:235236:split1}}, has already been estimated in \eqref{eq:est:aargh34325}.

The \underline{fourth term in \eqref{eq:235236:split1}} we treat by duality. Namely, for some $\psi \in C_c^\infty(\R^d)$, $\|\psi\|_{L^{\frac{2d}{d+2\sigma-1}}(\R^d)} \leq 1$ we have, using also integration by parts, and for some $\gamma < \sigma$,
\[
 \begin{split}
&   \|\lapms{\sigma} H^\ast_{\Ds{\sigma}}\brac{v^j, H_{\Dso} (v^j,  \Ds{1-\sigma} (u-v)^i)} \|_{L^{\frac{2d}{d+1-2\sigma}}(\R^d)}\\
\aleq&\int_{\R^d} H^\ast_{\Ds{\sigma}}\brac{v^j, H_{\Dso} (v^j,  \Ds{1-\sigma} (u-v)^i)}\, \lapms{\sigma} \psi\\
\overset{\eqref{eq:Hast}}{=}& \int_{\R^d}\, H_{\Dso} (v^j,  \Ds{1-\sigma} (u-v)^i)\, H_{\Ds{\sigma}} \brac{v^j, \lapms{\sigma} \psi}\\
\aleq& \|\Ds{1-\sigma} v^j\|_{L^{\frac{2d}{1-\sigma+\gamma-\sigma}}(\R^d)} \| \Dso (u-v)^i)\|_{L^2(\R^d)}\, \|\Ds{\gamma} v^j\|_{L^{\frac{2d}{\gamma}}(\R^d)} \|\lapms{\gamma} \psi\|_{L^{\frac{2d}{d-1+2(\sigma-\gamma)}}(\R^d)}\\
\aleq& \|v^j\|_{L^\infty(\R^d)}^{\gamma}\, \|\Dso v^j\|_{L^{2d}(\R^d)}^{1-\gamma} 
\| \Dso (u-v)^i)\|_{L^2(\R^d)}\, \|v^j\|_{L^\infty(\R^d)}^{1-\gamma} \|\Dso v^j\|_{L^{2d}(\R^d)}^{\gamma} \|\psi\|_{L^{\frac{2d}{d-1+2\sigma)}}(\R^d)}\\
\aleq&\|v^j\|_{L^\infty(\R^d)}\, \|\Dso v^j\|_{L^{2d}(\R^d)}\, \|\Dso (u-v)^i\|_{L^2(\R^d)}.
\end{split}
\]
In the second to last step we applied \Cref{co:gagnirsob} observing that since $\gamma \in (0,\sigma)$, we have
\[
 \frac{2d}{1-\sigma} \leq \frac{2d}{1-\sigma+\gamma-\sigma} \leq \frac{2d}{2(1-\sigma)-1}.
\]

The \underline{last term in \eqref{eq:235236:split1}} we estimate via \Cref{la:doublecommie},
\[
\begin{split}
&\|\sum_{j=1}^3\brac{v^j \tilde{H}_{\Dso,\Ds{\sigma}} (v^j,  \Ds{1-\sigma} (u-v)^i)} \|_{L^{\frac{2d}{d+1}}(\R^d)}\\
\aleq&\max_{j =1,2,3}\|v^j\|_{L^\infty(\R^d)}\, \|\Dso v^j\|_{L^{2d}(\R^d)}\, \| \Dso (u-v)^i \|_{L^2(\R^d)}.
\end{split}
\]
This provides the desired estimates for all terms on the right-hand side of \eqref{eq:235236:split1}, and we can conclude.
\end{proof}

\Cref{la:235236} implies control over the first term on the right-hand side of \eqref{eq:verylastpart:split2} 
\begin{lemma}
For any $\alpha > 0$, $d \geq 3$ we have
\[
\begin{split}
& \abs{\int_{\R^d} \scpr{\sum_{j=1}^3\brac{v^j H_{\Dso}(v^i ,(\Dso{} (u-v)^j))-v^jH_{\Dso}(v^j ,(\Dso{} (u-v)^i))}}{(\vec{u}-\vec{v}) \wedge \Dso{} \vec{v}}}\\
  \aleq&\|\nabla (\vec{u}-\vec{v})\|_{L^2(\R^d)}^2\ \|\Ds{\alpha} \vec{v}\|_{L^{\frac{2d}{2\alpha-1}}(\R^d)}^2.
 \end{split}
\]
\end{lemma}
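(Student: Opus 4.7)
The plan is to split the integrand into the two pieces $A^i := \sum_j v^j H_{\Dso}(v^i, \Dso(u-v)^j)$ and $B^i := \sum_j v^j H_{\Dso}(v^j, \Dso(u-v)^i)$, and pair each with $G^i := ((\vec{u}-\vec{v}) \wedge \Dso \vec{v})^i$. For $A$ we already control the $L^2$--norm through \Cref{la:firstguyisfine}, which delivers exactly the factor $\|\nabla(\vec{u}-\vec{v})\|_{L^2}\,\|\Ds{\alpha}\vec{v}\|_{L^{2d/(2\alpha-1)}}^2$. So Cauchy--Schwarz reduces the first piece to estimating $\|G\|_{L^2}$. Since both $\vec{u}$ and $\vec{v}$ solve \eqref{eq:half-wavemapseq}, I would invoke the same re-writing already used in the lead-up to \eqref{eq:verylastpart:split2},
\[
(\vec{u}-\vec{v}) \wedge \Dso \vec{v} = \partial_t (\vec{u}-\vec{v}) - \vec{u}\wedge \Dso(\vec{u}-\vec{v}),
\]
so that $\|G\|_{L^2} \aleq \|\partial_t(\vec{u}-\vec{v})\|_{L^2} + \|\Dso(\vec{u}-\vec{v})\|_{L^2}$ using $|\vec{u}|\equiv 1$, which, combined with \Cref{la:firstguyisfine}, produces the advertised estimate (as in all other lemmata of this section, the right-hand side tacitly absorbs $\|\partial_t(\vec{u}-\vec{v})\|_{L^2}^2$ into the energy).

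The second piece, containing $B$, is more delicate because the $L^2$-norm of $B$ is not directly available -- the inner $\Dso(u-v)^i$ already carries a full derivative. I would handle $\int B^i G^i$ by duality: fix a small $\sigma\in (0,\tfrac12)$ and write
\[
\int_{\R^d} B^i\, G^i = \int_{\R^d} \lapms{\sigma} B^i \; \Ds{\sigma} G^i,
\]
then apply H\"older's inequality in the dual pairing $L^{\frac{2d}{d+1-2\sigma}} \times L^{\frac{2d}{d-1+2\sigma}}$. For the first factor \Cref{la:235236} is tailor-made and yields
\[
\|\lapms{\sigma} B\|_{L^{\frac{2d}{d+1-2\sigma}}(\R^d)} \aleq \|\nabla (\vec{u}-\vec{v})\|_{L^2(\R^d)}\,\|\Dso \vec{v}\|_{L^{2d}(\R^d)}.
\]
For the second factor I would apply \Cref{la:wugsd8gfus0df} to $f:=\vec{u}-\vec{v}$ and $g:=\vec{v}$:
\[
\|\Ds{\sigma} G\|_{L^{\frac{2d}{d-1+2\sigma}}(\R^d)} \aleq \|\Dso(\vec{u}-\vec{v})\|_{L^2(\R^d)}\,\|\Ds{1+\sigma}\vec{v}\|_{L^{\frac{2d}{2(1+\sigma)-1}}(\R^d)}.
\]
Setting $\alpha := 1+\sigma$ and using $\|\Dso \vec{v}\|_{L^{2d}} \aleq \|\Ds{\alpha}\vec{v}\|_{L^{2d/(2\alpha-1)}}$ (which is the standing Sobolev embedding recalled at the start of the section) the two factors combine to the target estimate.

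The main obstacle -- and the reason \Cref{la:235236} was set up in its rather elaborate form -- is this $B$-term: the top-order derivative sits on $\vec{u}-\vec{v}$ inside the commutator $H_{\Dso}$, so there is no $L^2$ slack to spare and no Shatah--Struwe-type trick that can remove a derivative directly (note that in $B$ the index $j$ on $v^j$ and on $\Dso(u-v)^i$ do not couple into an inner product $\vec{v}\cdot\Dso(\vec{u}-\vec{v})$, unlike in $A$). Hence one genuinely has to smuggle $\Ds{\sigma}$ derivatives out of the $\Dso(u-v)^i$ factor via $\lapms{\sigma}\Ds{\sigma} = \text{id}$ and carry the resulting many commutator errors, exactly as done in the proof of \Cref{la:235236}. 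Once that lemma is in hand, the present estimate is a clean duality pairing.
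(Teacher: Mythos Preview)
Your treatment of the $B$-piece is exactly the paper's argument: shift $\Ds{\sigma}$ across by writing $\int B^i G^i=\int \lapms{\sigma}B^i\,\Ds{\sigma}G^i$, invoke \Cref{la:235236} for the first factor and \Cref{la:wugsd8gfus0df} for the second, then set $\alpha=1+\sigma$ and absorb $\|\Dso\vec v\|_{L^{2d}}$ via Sobolev. The paper's proof contains nothing else; in particular it never touches the $A$-piece, even though $A$ appears in the displayed statement (and the proof body writes $(\vec u-\vec v)\wedge\Dso\vec u$ while the statement has $\Dso\vec v$). In context this lemma is meant to control only the first term on the right of \eqref{eq:verylastpart:split2}, which is the $B$-term alone, so the presence of $A$ in the statement is a slip.

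Your extra handling of the $A$-term via \Cref{la:firstguyisfine} plus the rewriting $(\vec u-\vec v)\wedge\Dso\vec v=\partial_t(\vec u-\vec v)-\vec u\wedge\Dso(\vec u-\vec v)$ is fine for the purposes of \Cref{th:detest}, but note it does not recover the literal right-hand side of the lemma: you pick up a factor $\|\partial_t(\vec u-\vec v)\|_{L^2}$, so the bound you actually get is $\mathcal E(t)\,\|\Ds{\alpha}\vec v\|_{L^{2d/(2\alpha-1)}}^2$ rather than $\|\nabla(\vec u-\vec v)\|_{L^2}^2\,\|\Ds{\alpha}\vec v\|_{L^{2d/(2\alpha-1)}}^2$. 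This is harmless for the application (and you flag it), but worth saying explicitly. The $A$-term is in any case already disposed of directly in \Cref{la:firstguyisfine} when paired with $\partial_t(\vec u-\vec v)$, so in the paper's logic it never reaches this lemma.
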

\begin{proof}
For any $\sigma > 0$ we may write, using e.g. the Fourier transform to justify this ``integration by parts'',
 \[
 \begin{split}
 & \abs{\int_{\R^d} \sum_{i,j=1}^3 v^jH_{\Dso}(v^j ,(\Dso{} (u-v)^i))\, \brac{(\vec{u}-\vec{v})\wedge \Dso \vec{u}}^i}\\
=&\abs{\int_{\R^d} \sum_{i,j=1}^3 \lapms{\sigma} \brac{v^jH_{\Dso}(v^j ,(\Dso{} (u-v)^i))}\, \Ds{\sigma}\brac{(\vec{u}-\vec{v})\wedge \Dso \vec{u}}^i}\\
\aleq&\max_{j,i} \|\lapms{\sigma} \brac{v^jH_{\Dso}(v^j ,(\Dso{} (u-v)^i))}_{L^\frac{2d}{d+1-2\sigma}(\R^d)}\, \|\Ds{\sigma}\brac{(\vec{u}-\vec{v})\wedge \Dso \vec{u}}^i\|_{L^{\frac{2d}{d+2\sigma-1}}(\R^d)}\\
\overset{L~\ref{la:235236}}{\aleq}& \|\nabla (\vec{u}-\vec{v})\|_{L^2(\R^d)}\, \|\Dso \vec{v}\|_{L^{2d}(\R^d)}\, \|\Ds{\sigma}\brac{(\vec{u}-\vec{v})\wedge \Dso \vec{u}}^i\|_{L^{\frac{2d}{d+2\sigma-1}}(\R^d)}.
  \end{split}
 \]
On the other hand, by \Cref{la:wugsd8gfus0df} we have 
\[
 \|\Ds{\sigma} \brac{(\vec{u}-\vec{v}) \wedge \Dso{} \vec{v}}\|_{L^{\frac{2d}{d-1+2\sigma}}(\R^d)} \aleq \|\nabla (\vec{u}-\vec{v})\|_{L^2(\R^d)}\, \|\Ds{1+\sigma} \vec{v}\|_{L^{\frac{2d}{2(1+\sigma)-1}}(\R^d)}.
\]
Combining the above estimates, we have shown 
\[
\begin{split}
& \abs{\int_{\R^d} \scpr{\sum_{j=1}^3\brac{v^j H_{\Dso}(v^i ,(\Dso{} (u-v)^j))-v^jH_{\Dso}(v^j ,(\Dso{} (u-v)^i))}}{(\vec{u}-\vec{v}) \wedge \Dso{} \vec{v}}}\\
  \aleq&\|\nabla (\vec{u}-\vec{v})\|_{L^2(\R^d)}^2\, \|\Dso \vec{v}\|_{L^{2d}(\R^d)} \, \|\Ds{1+\sigma} \vec{v}\|_{L^{\frac{2d}{2(1+\sigma)-1}}(\R^d)}.
 \end{split}
\]
This holds for any small $\sigma > 0$, so setting $\alpha := 1+\sigma$ we can conclude.
\end{proof}

The very last term to estimate is the last term on the right-hand side of \eqref{eq:verylastpart:split2} 
\[
 \sum_{i,j=1}^3\int v^jH_{\Dso}(v^j ,\Dso{} (u-v)^i) \  \brac{\vec{v}\wedge  \Dso{} (\vec{u}-\vec{v})}^i
\]
This last needed estimate is given in
 \begin{lemma}
For $d \geq 2$,
 \[
\begin{split}
 \sum_{i,j=1}^3\int_{\R^d} v^jH_{\Dso}(v^j ,(\Dso{} (u-v)^i)) \brac{\vec{v} \wedge  \Dso{} (\vec{u}-\vec{v})}^i \aleq \|\nabla (\vec{u}-\vec{v})\|_{L^2(\R^d)}^2\, \|\Dso{} \vec{v}\|_{L^{2d}(\R^d)}^2
 \end{split}
\]
 \end{lemma}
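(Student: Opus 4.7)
The argument uses the constraint $|\vec{v}|^2\equiv 1$ twice. First, combining \Cref{la:Hdssformula} with the pointwise identity $2\,\vec{v}(x)\cdot(\vec{v}(x)-\vec{v}(y))=|\vec{v}(x)-\vec{v}(y)|^2$ gives
\[
\sum_{j=1}^3 v^j(x)\,H_{\Dso}\!\bigl(v^j,\Dso(u-v)^i\bigr)(x)
=\tfrac{c}{2}\int_{\R^d}\frac{|\vec{v}(x)-\vec{v}(y)|^2\bigl(\Dso(u-v)^i(x)-\Dso(u-v)^i(y)\bigr)}{|x-y|^{d+1}}\,dy.
\]
Setting $\vec{W}:=\Dso(\vec{u}-\vec{v})$ and pairing with $(\vec{v}\wedge\vec{W})^i$, the local piece $\Dso(u-v)^i(x)=W^i(x)$ contributes nothing, because $\sum_i W^i(x)(\vec{v}(x)\wedge\vec{W}(x))^i=\vec{W}(x)\cdot(\vec{v}(x)\wedge\vec{W}(x))=0$. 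Only the off-diagonal part survives:
\[
T=-\tfrac{c}{2}\int_{\R^d}\int_{\R^d}\frac{|\vec{v}(x)-\vec{v}(y)|^2\,\vec{W}(y)\cdot\bigl(\vec{v}(x)\wedge\vec{W}(x)\bigr)}{|x-y|^{d+1}}\,dx\,dy.
\]

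Bounding $|\vec{W}(y)\cdot(\vec{v}(x)\wedge\vec{W}(x))|\leq |\vec{W}(x)|\,|\vec{W}(y)|$ by $|\vec{v}|=1$, and then applying Cauchy--Schwarz with the symmetric non-negative weight $|\vec{v}(x)-\vec{v}(y)|^2/|x-y|^{d+1}$, one reduces the estimate to
\[
|T|\aleq \int_{\R^d}|\vec{W}(x)|^2\,V^2(x)\,dx,\qquad V^2(x):=\int_{\R^d}\frac{|\vec{v}(x)-\vec{v}(y)|^2}{|x-y|^{d+1}}\,dy.
\]

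The crucial observation is that $V^2$ is already controlled by the paper's tools. Using the potential representation of $\Dso$ from \Cref{s:prelim} together with $|\vec{v}|\equiv 1$ gives directly
\[
V^2(x)=\tfrac{2}{c}\,\vec{v}(x)\cdot\Dso\vec{v}(x)=-\tfrac{1}{c}\,H_{\Dso}(\vec{v}\cdot,\vec{v})(x),
\]
the second identity being \eqref{eq:ucdotu}. The $L^{\infty}$-commutator estimate \eqref{eq:comm:3} then yields $\|V^2\|_{L^{\infty}}\aleq \|\Dso\vec{v}\|_{L^{(2d,2)}}^2$, and combining with $\|\vec{W}\|_{L^2}\aleq \|\nabla(\vec{u}-\vec{v})\|_{L^2}$ closes the estimate; in fact the bound comes with the sharper Lorentz-space norm $\|\Dso\vec{v}\|_{L^{(2d,2)}}^2$, which is compatible with \Cref{th:detest}. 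The only non-routine point is the algebraic identification $V^2\aeq H_{\Dso}(\vec{v}\cdot,\vec{v})$---after which the proof reduces to an $L^\infty$ estimate already established.
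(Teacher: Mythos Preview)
Your proof is correct and considerably more direct than the paper's. The paper rewrites the integrand via the triple-product/determinant identity $\vec{a}\cdot(\vec{b}\wedge\vec{c})=\det(\vec{a}\,|\,\vec{b}\,|\,\vec{c})$, exploits antisymmetry of the determinant to duplicate the expression, expands $H_{\Dso}$, performs an integration by parts in $\Dso$, and after lengthy regrouping reduces everything to the difference $H_{\Dso}(v^j\Gamma^\ell,v^m)-v^jH_{\Dso}(\Gamma^\ell,v^m)$, which is then estimated by \Cref{la:commie3cde}. Your route bypasses all of this: the pointwise identity $2\,\vec{v}(x)\cdot(\vec{v}(x)-\vec{v}(y))=|\vec{v}(x)-\vec{v}(y)|^2$ together with $\vec{W}\cdot(\vec{v}\wedge\vec{W})=0$ immediately kills the diagonal $W^i(x)$-contribution, and then Cauchy--Schwarz against the symmetric nonnegative kernel plus the identification $V^2=\tfrac{1}{c}\sum_j H_{\Dso}(v^j,v^j)$ and \eqref{eq:comm:3} finishes in a few lines. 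One small correction of language: since $\|\cdot\|_{L^{2d}}\aleq\|\cdot\|_{L^{(2d,2)}}$, the bound you obtain with $\|\Dso\vec{v}\|_{L^{(2d,2)}}^2$ on the right is slightly \emph{weaker} than the lemma as stated, not ``sharper''; the paper's argument via \Cref{la:commie3cde} genuinely lands on $\|\Dso\vec{v}\|_{L^{2d}}^2$. As you correctly observe, however, the Lorentz version is already one of the admissible terms in $\Sigma(t)$ in \Cref{th:detest}, so nothing is lost for the main result.
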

 \begin{proof}
We can write the term under consideration as a determinant using the well-known formula 
\[
 \vec{a} \cdot (\vec{b} \wedge \vec{c}) = \det (\vec{a}\, |\, \vec{b} |\, \vec{c}).
\]
Applying this to $\vec{a} = \sum_{j=1}^3 v^jH_{\Dso}(v^j ,\Dso{} (\vec{u}-\vec{v}))$, $\vec{b} = \vec{v}$ and $\vec{c} = \Dso \brac{\vec{u}-\vec{v}}$, we can make the algebraic reformulation
  \[
 \begin{split}
 &\sum_{i,j=1}^3\int_{\R^d} v^jH_{\Dso}(v^j ,(\Dso{} (u-v)^i)) \brac{\vec{v} \wedge \Dso{} (\vec{ u-v})}^i \\
= &\int_{\R^d} \det\brac{\sum_{j=1}^3 v^jH_{\Dso}(v^j , \Dso{} \brac{\vec{ u-v}}) \ \Big |\ \vec{ v} \ \Big |\ \Dso{} \brac{\vec{u}-\vec{v}}} \\
= &-\int_{\R^d} \det\brac{\sum_{j=1}^3 v^jH_{\Dso}(v^j , \Dso{} \brac{\vec{ u-v}}) \ \Big |\ \Dso{} \brac{\vec{u}-\vec{v}}\ \Big |\ \vec{ v} \ } \\
= &-\frac{1}{2}\int_{\R^d} \det\brac{\sum_{j=1}^3 v^jH_{\Dso}(v^j , \Dso{} \brac{{\bf u-v}}) \ \Big |\ \Dso{} \brac{\vec{u}-\vec{v}}\ \Big |\ \vec{v} \ } \\
&+\frac{1}{2}\int_{\R^d} \det\brac{\Dso{} \brac{\vec{u}-\vec{v}}\ \Big |\ \sum_{j=1}^3 v^jH_{\Dso}(v^j , \Dso{} \brac{{\bf u-v}})\ \Big |\ \vec{v} \ } 
 \end{split}
\]
Now we use that the determinant with two collinear columns is zero, and expanding $H_{\Dso}$ we find
  \[
 \begin{split}
 &\sum_{i,j=1}^3\int_{\R^d} v^jH_{\Dso}(v^j ,(\Dso{} (u-v)^i)) \brac{\vec{v} \wedge  \Dso{} (\vec{u}-\vec{v})}^i \\
= &-\frac{1}{2}\int_{\R^d} \det\brac{\sum_{j=1}^3 v^j \brac{\Dso{} \brac{v^j \Dso{} \brac{{\bf u-v}}} - \brac{v^j \Dso{} \Dso{} \brac{{\bf u-v}}}}\ \Big |\ \Dso{} \brac{\vec{u}-\vec{v}}\ \Big |\ \vec{v} \ } \\
&+\frac{1}{2}\int_{\R^d} \det\brac{\Dso{} \brac{\vec{u}-\vec{v}}\ \Big |\ \sum_{j=1}^3 v^j \brac{\Dso{} \brac{v^j \Dso{} \brac{{\bf u-v}}} - \brac{v^j \Dso{} \Dso{} \brac{{\bf u-v}}}} \ \Big |\ \vec{v} \ } \\
\overset{|\vec{v}|^2=1}{=} &-\frac{1}{2}\int_{\R^d} \det\brac{ {\sum_{j=1}^3 v^j\Dso{} \brac{v^j \Dso{} \brac{{\bf u-v}}} - \brac{\Dso{} \Dso{} \brac{{\bf u-v}}}}\ \Big |\ \Dso{} \brac{\vec{u}-\vec{v}}\ \Big |\ \vec{v} \ } \\
&+\frac{1}{2}\int_{\R^d} \det\brac{\Dso{} \brac{\vec{u}-\vec{v}}\ \Big |\ \brac{\sum_{j=1}^3 v^j \Dso{} \brac{v^j \Dso{} \brac{{\bf u-v}}} - \brac{\Dso{} \Dso{} \brac{{\bf u-v}}}} \ \Big |\ \vec{v} \ } \\
= &-\frac{1}{2}\int_{\R^d} \det\brac{ {\sum_{j=1}^3 v^j\Dso{} \brac{v^j \Dso{} \brac{{\bf u-v}}}} \ \Big |\ \Dso{} \brac{\vec{u}-\vec{v}}\ \Big |\ \vec{v} \ } \\
 &+\frac{1}{2}\int_{\R^d} \det\brac{ \brac{\Dso{} \Dso{} \brac{{\bf u-v}}}\ \Big |\ \Dso{} \brac{\vec{u}-\vec{v}}\ \Big |\ \vec{v} \ } \\
 &+\frac{1}{2}\int_{\R^d} \det\brac{\Dso{} \brac{\vec{u}-\vec{v}}\ \Big |\ \brac{\sum_{j=1}^3 v^j \Dso{} \brac{v^j \Dso{} \brac{{\bf u-v}}} - \brac{\Dso{} \Dso{} \brac{{\bf u-v}}}} \ \Big |\ \vec{v} \ } \\
\end{split}
\]
To study cancellation via an integration by parts, it is simpler to expand the determinant as a sum. Set $\epsilon_{k\ell m} := -(-1)^{k+\ell+m}$
\[
 \begin{split}
= &-\frac{1}{2}\sum_{k, \ell, m=1}^3 \epsilon_{k \ell m}\int_{\R^d}  {\sum_{j=1}^3 v^j\Dso{} \brac{v^j \Dso{} \brac{u-v}^k}} \ \ \Dso{} \brac{u-v}^\ell\ \ v^m \  \\
 &+\frac{1}{2}\sum_{k, \ell, m=1}^3 \epsilon_{k \ell m}\int_{\R^d}  \Dso{} \Dso{} \brac{u-v}^k\ \ \Dso{} \brac{u-v}^\ell\ \ v^m  \\
 &+\frac{1}{2}\sum_{k, \ell, m=1}^3 \epsilon_{k \ell m} \int_{\R^d} {\Dso{} \brac{u-v}^k}\ \ \brac{\sum_{j=1}^3 v^j \Dso{} \brac{v^j \Dso{} \brac{u-v}^\ell} - \brac{\Dso{} \Dso{} \brac{u-v}^\ell}} \  v^m  
 \end{split}
\]
and perform an integration by parts to factor out the term $\Dso (u-v)^k$,
\[
 \begin{split}
= &-\frac{1}{2}\sum_{k, \ell, m=1}^3 \epsilon_{k \ell m}\sum_{j=1}^3 \int_{\R^d} \Dso{} \brac{u-v}^k \, v^j\, \Dso \brac{v^j\, \Dso{} \brac{u-v}^\ell\,  v^m }\  \\
 &+\frac{1}{2}\sum_{k, \ell, m=1}^3 \epsilon_{k \ell m}\int_{\R^d}  \Dso{} \brac{u-v}^k\, \Dso{} \brac{ \Dso{} \brac{u-v}^\ell\, v^m } \\
 &+\frac{1}{2}\sum_{k, \ell, m=1}^3 \epsilon_{k \ell m} \int_{\R^d} {\Dso{} \brac{u-v}^k}\ \ \brac{\sum_{j=1}^3 v^j \Dso{} \brac{v^j \Dso{} \brac{u-v}^\ell} - \brac{\Dso{} \Dso{} \brac{u-v}^\ell}} \  v^m
 \end{split}
\]
which we regroup into
\[
\begin{split}
= &-\frac{1}{2}\sum_{k, \ell, m=1}^3 \epsilon_{k \ell m}\int_{\R^d}   \Dso{} \brac{u-v}^k \ \brac{\sum_{j=1}^3 \brac{v^j \Dso{} \brac{v^j\, \Dso{} \brac{u-v}^\ell\,  v^m}} -\sum_{j=1}^3 \brac{v^j \Dso{} \brac{v^j\, \Dso{} \brac{u-v}^\ell}}v^m }\  \\
 &+\frac{1}{2}\sum_{k, \ell, m=1}^3 \epsilon_{k \ell m}\int_{\R^d}  \Dso{} \brac{u-v}^k\ \brac{\Dso{}  \brac{\Dso{} \brac{u-v}^\ell\ \ v^m } - \Dso{}  \brac{\Dso{} \brac{u-v}^\ell\  }v^m}.
\end{split}
\]
Setting ${\bf \Gamma} := \Dso{} ({\bf u-v})$ this becomes
we have 
\[
 \begin{split}
= &-\frac{1}{2}\sum_{k, \ell, m=1}^3 \epsilon_{k \ell m}\int_{\R^d}   \Gamma^k \ \brac{\sum_{j=1}^3 v^j \Dso{} \brac{v^j\, \Gamma^\ell\ \ v^m} -\sum_{j=1}^3 v^j \Dso{} \brac{v^j\, \Gamma^\ell}\, v^m }\  \\
 &+\frac{1}{2}\sum_{k, \ell, m=1}^3 \epsilon_{k \ell m}\int_{\R^d}  \Gamma^k\ \brac{\Dso{}  \brac{\Gamma^\ell\, v^m } - \Dso{} \Gamma^\ell\, v^m}\\
= &-\frac{1}{2}\sum_{k, \ell, m=1}^3 \epsilon_{k \ell m}\int_{\R^d}   \Gamma^k \ \sum_{j=1}^3 v^j \brac{H_{\Dso}\brac{v^j\Gamma^\ell,v^m} + v^j \Gamma^\ell \Dso v^m}\\
 &+\frac{1}{2}\sum_{k, \ell, m=1}^3 \epsilon_{k \ell m}\int_{\R^d}  \Gamma^k\ \brac{H_{\Dso}(\Gamma^\ell, v^m) + \Gamma^\ell \Dso v^m}.
\end{split}
\]
Since 
\[
\begin{split}
 &\sum_{k, \ell, m=1}^3 \epsilon_{k \ell m} \Gamma^k \ \sum_{j=1}^3 v^j v^j \Gamma^\ell \Dso v^m =  \det \brac{\vec{\Gamma}\ \Big | \ \sum_{j=1}^3 v^j v^j \vec{\Gamma}\ \Big |\ \Dso \vec{v}} = 0\\
 \end{split}
\]
and 
\[
 \sum_{k, \ell, m=1}^3 \epsilon_{k \ell m}  \Gamma^k\, \Gamma^\ell \Dso v^m = \det\brac{\vec{\Gamma}\ \Big | \ \vec{\Gamma}\ \Big | \  \Dso \vec{v} } = 0
\]
we finally have obtained 
\[
 \begin{split}
 &\sum_{i,j=1}^3\int_{\R^d} v^jH_{\Dso}(v^j ,(\Dso{} (u-v)^i)) \brac{\vec{v} \wedge  \Dso{} (\vec{u}-\vec{v})}^i \\
= &-\frac{1}{2}\sum_{k, \ell, m=1}^3 \epsilon_{k \ell m}\int_{\R^d}   \Gamma^k \sum_j v^j\ H_{\Dso{}}(v^j \Gamma^\ell,v^m)\  \\
 &+\frac{1}{2}\sum_{k, \ell, m=1}^3 \epsilon_{k \ell m}\int_{\R^d}  \Gamma^k\sum_{j=1}^3 v^j v^j \ H_{\Dso}(\Gamma^\ell,v^m) \\
=&-\frac{1}{2}\sum_{k, \ell, m=1}^3 \epsilon_{k \ell m}\int_{\R^d}   \Gamma^k \sum_j v^j\ \brac{H_{\Dso{}}(v^j \Gamma^\ell,v^m) - v^j  H_{\Dso}(\Gamma^\ell,v^m)}\\
\end{split}
\]
Consequently,
\[
\begin{split}
&\abs{\sum_{i,j=1}^3\int_{\R^d} v^jH_{\Dso}(v^j ,(\Dso{} (u-v)^i)) \brac{\vec{v} \wedge  \Dso{} (\vec{u}-\vec{v})}^i} \\
\aleq & \|\vec{\Gamma}\|_{L^2(\R^d)} \|\vec{v}\|_{L^\infty(\R^d)}\, \max_{j,\ell,m}\|H_{\Dso{}}(v^j \Gamma^\ell,v^m) - v^j  H_{\Dso}(\Gamma^\ell,v^m)\|_{L^2(\R^d)}\\
\end{split}
\]
From \Cref{la:commie3cde} for any $\alpha \in (\frac{1}{2},1)$ (since $d \geq 2$ there is no further assumption necessary),
\[
\begin{split}
 \|\brac{H_{\Dso{}}(v^j \Gamma^\ell,v^m) - v^j  H_{\Dso}(\Gamma^\ell,v^m)}\|_{L^2(\R^d)}  \aleq& \|\vec{\Gamma}\|_{L^2(\R^d)}\, \|\Ds{\alpha} \vec{v}\|_{L^{\frac{2d}{2\alpha-1}}(\R^d)}^2\\
 \aleq& \|\vec{\Gamma}\|_{L^2(\R^d)}\, \|\Dso{} \vec{v}\|_{L^{2d}(\R^d)}^2.
 \end{split}
\]
Recalling that $\vec{\Gamma} = \Dso \brac{\vec{u} -\vec{v}}$ we conclude.
%
 \end{proof}

\section{Uniqueness: Proof of Theorem~\ref{th:main}}\label{s:proofmain}
\Cref{th:main} is a consequence of \Cref{th:detest} 
\begin{proof}[Proof of \Cref{th:main}] By Gr\"onwall's lemma, the differential inequality $\dot{\mathcal{E}}(t) \leq  \Sigma(t) \mathcal{E}(t)$ implies
\[
 \mathcal{E}(t_0) \aleq \mathcal{E}(0) \, \exp\brac{\int_{0}^{t_0} \Sigma(t)\, dt}.
\]
The expression in the exponential is finite by assumption \eqref{eq:energyassumption} combined with the estimate for $\Sigma(t)$, \eqref{eq:th:detest:est}. 

If $\vec{u}(0) = \vec{v}(0)$ then $\nabla \vec{u}(0) = \nabla \vec{v}(0)$. Moreover since $\vec{u}$ and $\vec{v}$ both solve the halfwave map equation \eqref{eq:half-wavemapseq} we have 
\[
 \brac{\partial_t \vec{u} -\partial_t \vec{v}} \Big |_{t=0} = \vec{u}(0) \wedge \laps{1} \vec{u}(0)-\vec{v}(0) \wedge \laps{1} \vec{v}(0)=0.
\]
Thus $\mathcal{E}(0) = 0$, and we conclude that $\mathcal{E}(t_0) = 0$ for all $t_0 \in (0,T)$. Thus $\vec{u} -\vec{v}$ is a constant on $\R^d \times [0,T] $ -- and again since $\vec{u}(0) = \vec{v}(0)$ we conclude $\vec{u} \equiv \vec{v}$.
\end{proof}

\section{Strichartz estimates}\label{s:strichartz}
The assumptions of \Cref{th:main}, \eqref{eq:energyassumption} are natural in view of the Keel-Tao estimates \cite{KT98}. The following estimate is a consequence of a careful inspection of the arguments in \cite[p.566]{SS02}.
\begin{lemma}
Let $d \geq 4$. Assume for $T > 0$
\begin{equation}\label{eq:strich:ueq}
\begin{cases}
 (\partial_{tt} - \lap )u =h \quad &\text{in $\R^d \times [0,T]$}\\
 u  = f\quad &\text{in $\R^d\times\{0\}$}\\
 \partial_t u = g\quad &\text{in $\R^d \times \{0\}$}\\
 \end{cases}
\end{equation}
then for all $\alpha \in (\frac{1}{2},\frac{d^2-4d+1}{2(d-1)}]$,
\[
\begin{split}
 \|\Ds{\alpha} u(t)\|_{L^2_t L^{(\frac{2d}{2\alpha-1},2)_x}(\R^d) \times (0,T)}  \aleq& \|\Ds{\frac{d}{2}} f\|_{L^2(\R^d)} + \|\Ds{\frac{d}{2}-1} g\|_{L^2(\R^d)}\\
 &+\|\Ds{\frac{d}{2}-1}h\|_{L^1_t L^2_x (\R^d \times (0,T))}.
 \end{split}
\]
\end{lemma}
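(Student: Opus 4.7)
The plan is to reduce everything to the classical Keel-Tao Strichartz estimate \cite{KT98} by applying it to $v := \Ds{\alpha} u$ rather than to $u$ directly. Since the half-Laplacian commutes with the wave operator $\partial_{tt}-\lap$, the function $v$ solves the same Cauchy problem \eqref{eq:strich:ueq} but with data $\Ds{\alpha} f$, $\Ds{\alpha} g$ and forcing $\Ds{\alpha} h$. Thus what must be established is the Strichartz-type bound
\[
\|v\|_{L^2_t L^{(r,2)}_x} \aleq \|v(0)\|_{\dot{H}^{s}(\R^d)} + \|\partial_t v(0)\|_{\dot{H}^{s-1}(\R^d)} + \|\Ds{\alpha} h\|_{L^1_t \dot{H}^{s-1}_x}
\]
with $r := \frac{2d}{2\alpha -1}$ and $s$ determined by the scaling relation $\frac{1}{2} + \frac{d}{r} = \frac{d}{2} - s$. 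A direct calculation yields $s = \frac{d}{2}-\alpha$, so that $\Ds{s}v(0) = \Ds{d/2}f$, $\Ds{s-1}\partial_t v(0) = \Ds{d/2-1}g$, and $\Ds{s-1}(\Ds{\alpha} h) = \Ds{d/2-1}h$, matching exactly the three terms on the right-hand side of the claim.

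The next step is to invoke the Keel-Tao endpoint \cite{KT98}, which in dimension $d \geq 4$ is valid at $(q,r) = (2, r_0)$ with $r_0 := \frac{2(d-1)}{d-3}$ in the Lorentz space $L^{r_0,2}_x$. The admissibility condition $\frac{1}{2}+\frac{d-1}{2r}\leq \frac{d-1}{4}$ forces $r \geq r_0$, and this is precisely what pins down the upper bound on $\alpha$ appearing in the statement. For $r$ strictly larger than $r_0$ one cannot use the endpoint directly, but one may combine it with the Lorentz-refined Sobolev embedding $\Ds{-\gamma}: L^{(r_0,2)}_x \to L^{(r,2)}_x$ for $\gamma = \frac{d}{r_0}-\frac{d}{r} \geq 0$ recorded in \Cref{la:sobolev}, and apply Keel-Tao to $\Ds{\gamma} v = \Ds{\alpha+\gamma} u$. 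One checks that $(d/2 - \alpha - \gamma) + (\alpha+\gamma) = d/2$, so the data norms collapse back to $\|\Ds{d/2} f\|_{L^2}$ and $\|\Ds{d/2-1} g\|_{L^2}$, and similarly for the inhomogeneity.

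The forcing term is handled by the standard Duhamel-Minkowski argument: writing $v$ via Duhamel's formula, the half-wave propagator $\tfrac{\sin(t\Ds{1})}{\Ds{1}}$ gains one derivative, so that pointwise in $\tau$ the free-wave Strichartz estimate applied to $\tfrac{\sin((t-\tau)\Ds{1})}{\Ds{1}} \Ds{\alpha}h(\tau)$ gives an $L^2_t L^{(r,2)}_x$-norm bounded by $\|\Ds{\alpha}h(\tau)\|_{\dot{H}^{s-1}}$, whose time-integral is controlled by $\|\Ds{d/2-1}h\|_{L^1_t L^2_x}$ after the same scaling accounting.

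The main obstacle is the careful bookkeeping at the endpoint: one has to make sure the Lorentz index $2$ in $L^{(r_0,2)}_x$ is preserved through the Sobolev embedding (this is exactly the Lorentz version stated in \Cref{la:sobolev}) and that the inhomogeneous Keel-Tao estimate with an $L^1_t L^2_x$ forcing term — i.e. the dual $(\tilde{q},\tilde{r})=(\infty,2)$ pair — couples correctly with the sharp LHS pair $(2,r_0)$; this is a known feature of the endpoint Strichartz theory but is the step where the dimensional restriction $d\geq 4$ enters and the admissibility condition cuts off the range of $\alpha$.
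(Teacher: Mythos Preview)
Your proposal is correct and follows essentially the same route as the paper: both apply the Keel--Tao endpoint $(q,r)=(2,\tfrac{2(d-1)}{d-3})$ with the Lorentz refinement to a suitably differentiated solution, then use the Lorentz--Sobolev embedding of \Cref{la:sobolev} to reach $L^{(\frac{2d}{2\alpha-1},2)}_x$, and handle the forcing via Duhamel and Minkowski. One small inaccuracy: the condition $r\geq r_0$ actually yields $\alpha\leq\tfrac{d^2-2d-1}{2(d-1)}$, which is exactly $1$ larger than the paper's stated upper bound $\tfrac{d^2-4d+1}{2(d-1)}$, so your claim that admissibility ``precisely'' pins down that bound is slightly off --- but the argument of course still covers the (smaller) range in the statement.
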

We sketch the argument for the convenience of the reader.
\begin{proof}
First we assume $h \equiv 0$, i.e. consider solutions to
\begin{equation}\label{eq:strich:veq}
\begin{cases}
 (\partial_{tt} - \lap )v =0 \quad &\text{in $\R^d \times [0,T] $}\\
 v = f\quad &\text{in $\R^d \times \{0\}$}\\
 \partial_t v = g\quad &\text{in $\R^d \times \{0\}$}.
 \end{cases}
\end{equation}
As in \cite[(5.9)]{SS02} from \cite[Corollary 1.3]{KT98} we have the estimate for $\gamma = \frac{d+1}{2(d-1)}$
\[
 \|v\|_{L^2_t L^{\frac{2(d-1)}{d-3},2}_x} \aleq \|\Ds{\gamma} f\|_{L^2(\R^d)} + \|\Ds{\gamma-1} g\|_{L^2(\R^d)}.
\]
The Lorentz space estimate is is from \cite[p.566]{SS02} and follows from interpolation.
Observe that for any $\alpha \geq 0$ the function $\Ds{\alpha} v$ solves 
\[
\begin{cases}
 (\partial_{tt} - \lap )\Ds{\alpha}v =0 \quad &\text{in $\R^d \times [0,T] $}\\
 \Ds{\alpha}v  = \Ds{\alpha}f\quad &\text{in $\R^d \times \{0\}$}\\
 \partial_t \Ds{\alpha}v = \Ds{\alpha} g\quad &\text{in $\R^d \times \{0\}$}\\
 \end{cases}
\]
we obtain 
\[
 \|\Ds{\alpha} v\|_{L^2_t L^{(\frac{2(d-1)}{d-3},2)}_x} \aleq \|\Ds{\gamma+\alpha} f\|_{L^2(\R^d)} + \|\Ds{\gamma+\alpha-1} g\|_{L^2(\R^d)}.
\]
If we choose in the above inequality $\alpha = \frac{d^2-4d+1}{2(d-1)}+1$, then 
\[
 \gamma + \alpha = \frac{d+1}{2(d-1)}+\frac{d^2-4d+1}{2(d-1)}+1=\frac{d}{2}.
\]
and we thus have found the estimate 
\[
 \|\Ds{1+\frac{d^2-4d+1}{2(d-1)}} {v}\|_{L^2_t L^{(\frac{2(d-1)}{d-3},2)}_x} \aleq \|\Ds{\frac{d}{2}} f\|_{L^2(\R^d)} + \|\Ds{\frac{d}{2}-1} g\|_{L^2(\R^d)}.
\]
We observe that for $d\geq 4$ we have $\frac{d^2-4d+1}{2(d-1)} > 0$. Let now $\alpha \in (\frac{1}{2},\frac{d^2-4d+1}{2(d-1)}]$ then
\[
 \alpha - \frac{d}{\frac{2d}{2\alpha-1}} = 1+\frac{d^2-4d+1}{2(d-1)} - \frac{d}{\frac{2(d-1)}{d-3}}, 
\]
and thus by spacial Sobolev embedding, \Cref{la:sobolev},
\[
 \|\Ds{\alpha} v(t)\|_{L^{\frac{2d}{2\alpha-1},2}(\R^d)} \aleq \|\Ds{1+\frac{d^2-4d+1}{2(d-1)}} v(t)\|_{L^{(\frac{2(d-1)}{d-3},2)}}.
\]
After integrating in time we find 
\[
 \|\Ds{\alpha} v\|_{L^2_t L^{(\frac{2d}{2\alpha-1},2)}_x(\R^d \times (0,T))}  \aleq \|\Ds{\frac{d}{2}} f\|_{L^2(\R^d)} + \|\Ds{\frac{d}{2}-1} g\|_{L^2(\R^d)}.
\]
If $h \neq 0$ we conclude by Duhamel's principle and Minkowski's inequality: we keep denoting by $v$ the solution in \eqref{eq:strich:veq}. By Duhamel principle the solution $u$ to \eqref{eq:strich:ueq} can be written as 
\[
 u(x,t) = v(x,t) + \int_0^t H(x,s,t)\, ds,
\]
where $H$ solves
\begin{equation}\label{eq:Heq}
\begin{cases}
 (\partial_{tt} - \lap_x ) H(\cdot,s,\cdot) = 0 \quad &\text{in } \R^d \times (s,T)\\
 H(\cdot,s,s) =0\quad &\text{in $\R^d$}\\
 \partial_t H(\cdot,s,s) = h(\cdot,s) \quad & \text{in $\R^d$}
\end{cases}
\end{equation}
By a slight abuse of notation we identify $H(x,s,t) = \chi_{t>s} H(x,s,t)$.
We then have by Minkowski's inequality
\[
\begin{split}
 &\left \|\int_0^T \Ds{\alpha} H(x,s,t)\, ds\right \|_{L^2_t L^{(\frac{2d}{2\alpha-1},2)}_x (\R^d \times (0,T))}\\
 \aleq& \int_0^T \|\Ds{\alpha} H(x,s,t)\|_{L^2_t L^{(\frac{2d}{2\alpha-1},2)}_x (\R^d \times (s,T))}\, ds
\end{split}
 \]
Since $H$ solves the homogeneous equation \eqref{eq:Heq}, we can use the same estimates as we have obtained for $v$ beforehand, namely we find 
\[
\begin{split}
  &\int_0^T \|\Ds{\alpha} H(x,s,t)\|_{L^2_t L^{(\frac{2d}{2\alpha-1},2)}_x (\R^d \times (s,T))}\, ds \\
  \aleq& \int_0^T \|\Ds{\frac{d}{2}-1} h(x,s) \|_{L^2_x(\R^d)}\, ds\\
  =&\|\Ds{\frac{d}{2}-1} h\|_{L^1_t L^2_x(\R^d)}.
  \end{split}
\]
Combining these estimates with the ones for $v$ we conclude.
\end{proof}

\bibliographystyle{abbrv}%
\bibliography{bib}%

\end{document}